\documentclass[reqno,11pt]{amsart}
\usepackage{setspace,tikz,xcolor,mathrsfs,listings,multicol,amssymb}
\usepackage[vcentermath]{youngtab}
\usepackage[margin=1in,includefoot,footskip=30pt]{geometry}
\usepackage{rotating,enumerate,booktabs}
\usepackage[centertableaux]{ytableau}
\usepackage[all,cmtip]{xy}
\usetikzlibrary{arrows,matrix}
\tikzset{tab/.style={matrix of math nodes,column sep=-.35, row sep=-.35,text height=7pt,text width=7pt,align=center,inner sep=2,font=\footnotesize}}
\usepackage[colorlinks=true, pdfstartview=FitV, linkcolor=black, citecolor=black, urlcolor=black]{hyperref}

\usepackage{tikz}\usetikzlibrary{graphs, quotes, fit, positioning, matrix, calc, decorations.markings, angles, decorations.pathmorphing, decorations.pathreplacing}
\tikzset{math mode/.style = {execute at begin node=$, execute at end node=$}}
\tikzset{arrow/.style={postaction={decorate,thick,decoration={markings,mark = at position #1 with {\arrow{>}}}}},arrow/.default=0.5}
\tikzset{invarrow/.style={postaction={decorate,thick,decoration={markings,mark = at position #1 with {\arrow{<}}}}},invarrow/.default=0.5}
\newcommand\bbull[3]{\filldraw[fill=black!35!blue, draw=black] (#1,#2) circle (#3cm);}
\newcommand\ebull[3]{\filldraw[fill=white,draw=black] (#1,#2) circle (#3cm);}
\renewcommand\ss{\scriptstyle}
\tikzset{blob/.style={circle,thin,draw=black,fill=#1,inner sep=1.8pt}}


\newif\iftikz
\tikztrue

\newcommand{\g}{\mathfrak{g}}
\newcommand{\fsl}{\mathfrak{sl}}

\newcommand{\eschur}{E} 
\newcommand{\dualfact}{\mathcal{E}} 
\newcommand{\fw}{\Lambda} 
\newcommand{\coroot}{\alpha^{\vee}} 
\newcommand{\inner}[2]{\langle #1, #2 \rangle}
\newcommand{\iso}{\cong}

\newcommand{\ssyt}{\operatorname{SSYT}}

\newcommand{\elt}{\operatorname{ELT}}

\newcommand{\Gr}{\operatorname{Gr}}  
\newcommand{\GL}{\operatorname{GL}}  

\newcommand{\elb}[2]{\raisebox{-4pt}{$\substack{\displaystyle #1 \\[1pt] \displaystyle #2}$}} 
\newcommand{\elbb}[2]{\raisebox{5pt}{$\substack{\displaystyle #1 \\[1pt] \displaystyle #2}$}} 
\newcommand{\abs}[1]{\lvert #1 \rvert}

\newcommand{\ket}[1]{| #1 \rangle}
\newcommand{\bra}[1]{\left\langle #1\right|}
\newcommand{\braket}[2]{\left\langle #1|#2\right\rangle}
\newcommand{\normord}[1]{: \mathrel{#1} :}  
\newcommand{\bbb}{\mathsf{b}}
\newcommand{\cp}{{\color{blue}+}}
\newcommand{\cm}{{\color{darkred}-}}
\newcommand{\zero}{\mathbf{0}}

\DeclareMathOperator{\word}{rwd} 
\DeclareMathOperator{\wt}{wt} 
\DeclareMathOperator{\Bwt}{Bwt} 
\DeclareMathOperator{\ch}{ch} 
\DeclareMathOperator{\sig}{sig} 
\DeclareMathOperator{\End}{End} 

\newcommand{\mcB}{\mathcal{B}}
\newcommand{\mcC}{\mathcal{C}}

\newcommand{\mcI}{\mathcal{I}}
\newcommand{\mcS}{\mathcal{S}}
\newcommand{\mcV}{\mathcal{V}}

\newcommand{\TMat}{\mathbf{T}}
\newcommand{\tmat}{\mathbf{t}}

\newcommand{\fE}{\mathfrak{E}}

\newcommand{\fM}{\mathfrak{M}}

\newcommand{\sft}{\mathsf{t}}

\newcommand{\xx}{\mathbf{x}}
\newcommand{\yy}{\mathbf{y}}

\newcommand{\aaa}{\mathbf{a}}

\newcommand{\ZZ}{\mathbb{Z}}

\newcommand{\CC}{\mathbb{C}}

\definecolor{darkred}{rgb}{0.7,0,0} 
\newcommand{\defn}[1]{{\color{darkred}\emph{#1}}} 

\definecolor{UQgold}{RGB}{196, 158, 54} 
\definecolor{UQpurple}{RGB}{73, 7, 94} 
\definecolor{OCUenji}{RGB}{153,0,51} 
\definecolor{OCUsapphire}{RGB}{0,51,102} 

\tikzset{scol/.style={red}}
\tikzset{ddscol/.style={cyan}}
\tikzset{dscol/.style={blue}}
\tikzset{mscol/.style={OCUsapphire}}
\usepackage{listings}
\lstdefinelanguage{Sage}[]{Python}
{morekeywords={False,sage,True},sensitive=true}
\lstset{
  frame=single,
  showtabs=False,
  showspaces=False,
  showstringspaces=False,
  commentstyle={\ttfamily\color{dgreencolor}},
  keywordstyle={\ttfamily\color{dbluecolor}\bfseries},
  stringstyle={\ttfamily\color{dgraycolor}\bfseries},
  language=Sage,
  basicstyle={\footnotesize\ttfamily},
  aboveskip=0.75em,
  belowskip=0.75em,
  xleftmargin=.15in,
}
\definecolor{dblackcolor}{rgb}{0.0,0.0,0.0}
\definecolor{dbluecolor}{rgb}{0.01,0.02,0.7}
\definecolor{dgreencolor}{rgb}{0.2,0.4,0.0}
\definecolor{dgraycolor}{rgb}{0.30,0.3,0.30}
\definecolor{pinkish}{rgb}{1.0, 0.44, 0.37}

\theoremstyle{plain}
\newtheorem{thm}{Theorem}[section]
\newtheorem{lemma}[thm]{Lemma}
\newtheorem{conj}[thm]{Conjecture}
\newtheorem{prop}[thm]{Proposition}
\newtheorem{cor}[thm]{Corollary}
\theoremstyle{definition}
\newtheorem{dfn}[thm]{Definition}
\newtheorem{ex}[thm]{Example}
\newtheorem{remark}[thm]{Remark}
\newtheorem{problem}[thm]{Problem}
\numberwithin{equation}{section}



\usepackage[colorinlistoftodos]{todonotes}

\setlength{\marginparwidth}{2cm}


\begin{document}
\title[Integrable crystals for ELT]{Integrable systems and crystals for edge labeled tableaux}

\author[A.~Gunna]{Ajeeth Gunna}
\address[A.~Gunna]{School of Mathematics and Statistics, University of Melbourne, Parkville, Victoria 3010, Australia}
\email{agunna@student.unimelb.edu.au}

\author[T.~Scrimshaw]{Travis Scrimshaw}
\address[T.~Scrimshaw]{OCAMI, Osaka City University, 3--3--138 Sugimoto, Sumiyoshi-ku, Osaka 558-8585, Japan}
\email{tcscrims@gmail.com}
\urladdr{https://tscrim.github.io}

\keywords{edge labeled tableau, crystal, factorial Schur function}
\subjclass[2010]{05A19, 05E05, 82B23, 14M15}

\thanks{
T.S.~was partially supported by Grant-in-Aid for JSPS Fellows 21F51028.
}

\begin{abstract}
We introduce the edge Schur functions $E^{\lambda}$ that are defined as a generating series over edge labeled tableaux.
We formulate $E^{\lambda}$ as the partition function for a solvable lattice model, which we use to show they are symmetric polynomials and derive a Cauchy-type identity with factorial Schur polynomials.
Finally, we give a crystal structure on edge labeled tableau to give a positive Schur polynomial expansion of $E^{\lambda}$ and show it intertwines with an uncrowding algorithm.
\end{abstract}

\maketitle

\section{Introduction}
\label{sec:introduction}

A classical and important object from algebraic geometry is the Grassmannian $X = \Gr(k,n)$, the set of $k$-dimensional planes in $\CC^n$, which is a projective variety from the Pl\"ucker embedding.
We can realize $\Gr(k,n) = \GL_n / P_k$, where $\GL_n$ is the general linear group (over $\CC$) and $P_k$ is the maximal parabolic subgroup consisting of (invertible) block upper triangular matrices with block sizes $(k, n-k)$.
This gives $\Gr(k, n)$ the structure of a complex manifold.
To construct a cell decomposition of the Grassmannian, we first need to define $B$ as the subgroup of upper triangular matrices and identify permutations with their permutation matrix in $\GL_n$.
The decomposition we consider is indexed by partitions $\lambda$ contained inside of a $k \times (n-k)$ rectangle, and for each such $\lambda$, we define a permutation
\[
w_{\lambda} = [\lambda_k+1, \dotsc, \lambda_1+k, k+1-\lambda'_1, \dotsc, n-\lambda'_{n-k}]
\]
written in one-line notation, where $\lambda'$ is the conjugate partition.
Then, the $B$-orbits $B w_{\lambda}P_k / P_k$ are algebraic affine spaces of (complex) dimension $\abs{\lambda}$ and give the cell decomposition of $\Gr(k,n)$~\cite{Ehresmann34} essentially as the $LU$ decomposition of a matrix.
By taking the closure under the Zariski topology of the $B$-orbits, we obtain the Schubert varieties $X_{\lambda} = \overline{B w_{\lambda}P_k/P_k}$, which are closed irreducible varieties giving the closed cells with $X_{\mu} \subseteq X_{\lambda}$ if and only if $\mu \subseteq \lambda$.
We could also use the subgroup $B^-$ of lower triangular matrices to form the opposite Schubert varieties $X^{\lambda} = \overline{B^- w_{\lambda} P_k/P_k}$ with (complex) codimension $\abs{\lambda}$.

The singular cohomology ring $H^{\bullet}(X)$ of the Grassmannian is one of the fundamental topological invariants.
For any closed subvariety $Y \subseteq X$ of dimension $k$, we can construct a cohomology class $[Y] \in H^{2k}(X)$.
We have a basis for $H^{\bullet}(M)$ corresponding to Schubert varieties $\{[X_{\lambda}]\}_{\lambda}$ (from~\cite[\S8]{vanderWaerden30} interpreted using later results) as they are closed irreducible subvarieties from the cell decomposition.
The cup product corresponds to the intersection of Schubert varieties under Poincar\'e duality and gives finer information than the homology $H_{\bullet}(X)$.
The structure coefficients $[X_{\lambda}] \smile [X_{\mu}] = \sum_{\nu} c_{\lambda\mu}^{\nu} [X_{\nu}]$ correspond to the decomposition of the intersection $X_{\lambda} \cap X_{\mu}$ into irreducibles $X_{\nu}$ (taking representatives that intersect transversally).
From this geometric interpretation, $c_{\lambda\mu}^{\nu}$ is a positive integer that can be described combinatorially.
We refer the reader to~\cite{Fulton} for more details.

From the classical works of Giambelli~\cite{Giambelli02} and Pieri~\cite{Pieri93}, we can represent the cohomology class of Schubert varieties by Schur functions $[X_{\lambda}] \leftrightarrow s_{\lambda}(\xx_k)$ in $k$ variables $\xx_k = (x_1, \dotsc, x_k)$.
Furthermore, we have a simple description of $H^{\bullet}(X) \iso \Lambda / \mcI_k$~\cite{Borel53}, where $\Lambda$ is the ring of symmetric functions and $\mcI_k = (s_{\lambda}(\xx_k) \mid \lambda \not\subseteq (n-k)^k)$ is the ideal generated by Schur functions such that $\lambda$ is not contained inside of the $k \times (n-k)$ rectangle.
Since this is an isomorphism of rings (or $\CC$-algebras), we have that the Littlewood--Richardson coefficients $c_{\lambda\mu}^{\nu}$ are nonnegative integers, which have a well-known combinatorial interpretation (see, \textit{e.g.},~\cite{ECII}).
We can then take the natural embeddings $\iota_n \colon \GL_n / P_k \to \GL_{n+1} / P_k$ to build the (direct) limit $n \to \infty$ object (which turns out to be the classifying space of the unitary group $\mathrm{U}_n$) and the cohomology rings are symmetric functions in $k$ variables with natural quotients between them (and hence $H^{\bullet}(\Gr(k,\infty))$ is the inverse limit).

We can build the bialgebra structure on symmetric functions by using the natural embedding
\begin{equation}
\label{eq:grassmannian_embedding}
\Gr(j,m) \times \Gr(k,n) \to \Gr(j+k,m+n),
\qquad\qquad
(V, W) \mapsto V \oplus W.
\end{equation}
By the K\"unneth formula, we have an induced (surjective) map on cohomology
\[
H^a(\Gr(j+k,m+n)) \to H^a(\Gr(j,m) \times \Gr(k,n)) \iso \bigoplus_{a=b+c} H^b(\Gr(j,m)) \otimes H^c(\Gr(k,n))
\]
that corresponds to the coproduct on symmetric functions, reflecting the idea that the coproduct is splitting the input variables.
By instead considering homology, this becomes a product on $H_{\bullet}(X)$, which makes it isomorphic (as an algebra) to symmetric functions (see, \textit{e.g.},~\cite[Sec.~1.1]{KL15}), where the Schur functions again arise as the classes for Schubert varieties.
(The coproduct on homology is again from the K\"unneth formula, but this time applied to the diagonal embedding.)
The geometric analog of the antipode is the natural duality of the Grassmannian $\Gr(k, n) \iso \Gr(n-k,n)$ sending $w_{\lambda} \mapsto w_0 w_{\lambda} w_0 = w_{\lambda'}$.
Poincar\'e duality identifies the homology with the cohomology, encoding the fact that symmetric functions are a self-dual Hopf algebra, but we need to use opposite Schubert varities on one side in order to make a direct comparison.
Furthermore, for $\abs{\lambda} = \abs{\mu}$, we have $\abs{X^{\lambda} \cap X_{\mu}} = \delta_{\lambda\mu}$ (see, \textit{e.g.},~\cite[Sec.~9.4]{Fulton}) being equivalent to the Schur functions being an orthonormal basis under the pairing, which is the Hall inner product.
See~\cite[Sec.~9.5]{LamPyl07} and~\cite{TY10} for these statements in $K$-theory.

One subtle point about the product on the homology above is that we are changing the space at each step, and so the product only makes sense in the corresponding direct sum of homologies (see~\cite[Sec.~1.1]{KL15} for a precise statement).
The geometric interpretation of this is to instead use the infinite union of Grassmannians $\Gr = \bigcup_{k,n=0}^{\infty} \Gr(k, n)$, but it is sufficient to consider $\bigcup_{k=0}^{\infty} \Gr(k,2k)$.
This is the infinite Grassmannian, which can be described as the virtual dimension zero part of the Sato Grassmannian and is homotopy equivalent to the loop space $\Omega \mathrm{SU}_{\infty}$.
We also encounter this if we try to take the limit $k \to \infty$ of $\Gr(k, \infty)$ as we have to make sense (and be more careful) of what an infinite dimensional subspace of an infinite dimensional space is.
In taking these limits, we are naturally lead to back-stable Schubert calculus studied in detail in~\cite{LLS21}, where the isomorphism with symmetric functions (in infinitely many variables) was given in~\cite[Thm.~6.7]{LLS21}.
In particular, this could be seen by taking the limit $\Gr(k, \infty)$ as $k \to \infty$ but reindexing the $\CC^{\infty}$ basis to be $\{\epsilon_{-k+1}, \ldots, \epsilon_{-1}, \epsilon_0, \epsilon_1, \ldots\}$ for each $k$~\cite[Prop.~3.4]{LLS21}.

Now let us consider the maximal torus $T$ of invertible diagonal matrices and look at the $T$-equivariant cohomology ring $H_T^{\bullet}(X)$.
By noting that each $B$-orbit contains a unique $T$-fixed point $w_{\lambda} P_k/P_k$, we can again use the Schubert varieties to construct a basis of the $T$-equivariant cohomology ring $H_T^{\bullet}(X)$ by $T$-equivariant cohomology classes $[X_{\lambda}]_T$.
We can represent $[X_{\lambda}]_T \in H_T^{\bullet}(X)$ by the factorial Schur functions $s_{\lambda}(\xx_n|\aaa)$~\cite{LS82II,Knutsontaopuzzles} with equivariant parameters $\aaa = (\ldots, a_{-1}, a_0, a_1, \ldots)$, although the precise quotient isomorphic to $H_T^{\bullet}(X)$ is more subtle.
Despite this, a natural question is to compute the structure (or Littlewood--Richardson) coefficients $c_{\lambda\mu}^{\nu}(\aaa)$ for the factorial Schur functions to determine the structure coefficients of $H_T^{\bullet}(X)$.

A combinatorial rule for $c_{\lambda\mu}^{\nu}(\aaa)$ was first accomplished by Molev and Sagan~\cite{Molevsagan1997ALR} in the context of a more general problem.
While these are not manifestly positive, Graham~\cite{Graham1999PositivityIE} rephrased it to be a positive sum of products of certain binomials that correspond to the positive simple roots for $\GL_n$.
In this sense, the Molev--Sagan description is positive as noted by Zinn-Justin~\cite{ZJ09}.
Another positive (in the above sense) formula was given in terms of tilings by Knutson and Tao~\cite{Knutsontaopuzzles}, which was later shown to be governed by a quantum integrable system by Zinn-Justin~\cite{ZJ09}.
However, one might ask for a (skew) tableau rule like one of the classical Littlewood--Richardson rule.
Such a formulation was given by Thomas and Yong~\cite{TY18} by introducing edge labeled tableaux.
The $k \to \infty$ limit for the $T$-equivariant cohomology ring still works using the infinite Grassmannian~\cite[Thm.~6.7]{LLS21}.

Now if we want to examine a potential Hopf algebra structure for $H_T^{\bullet}(X)$, we would need the $T$ action to respect the embedding~\eqref{eq:grassmannian_embedding}.
Unfortunately, in order to deal with the reindexing, we have to coalesce the torus $T$ down to a single circle action $\CC^{\times}$, which means almost setting the involved equivariant parameters $\aaa$ in the limit to a single constant $\alpha$.
More precisely, we take $a_i = \alpha$ for $i < 0$ and $a_i = 0$ for $i \geq 0$.
This ring was studied by Knutson and Lederer~\cite{KL15}, where they gave a combinatorial rule for the corresponding Littlewood--Richardson coefficients in terms of pipe dreams.
By taking the Hopf dual of $H_T^{\bullet}(\Gr)$ (with the Hopf structure coming algebraically from Molev~\cite{Molev-dualschur}), we have that the dual basis $\widehat{s}_{\lambda}(\xx|\aaa)$ of the factorial Schur functions introduced by Molev~\cite{Molev-dualschur} are representatives for the Schubert varieties in the equivariant homology $H_{\bullet}^T(\Gr)$~\cite[Prop.~8.1]{LLS21}.
We remark that this dual basis is not under the Hall inner product but instead under a simple modification~\cite[Eq.~(5.4)]{Molev-dualschur} that interacts well with localization.
Thus, using the aforementioned specialization of $\aaa$, we arrive at an isomorphic ring and basis~\cite[Thm.~8.12]{LLS21} for the $\CC^{\times}$-equivariant cohomology studied by Knutson and Lederer~\cite{KL15}.

Armed with our geometric and algebraic formulations, we want to explore the branching rule by pulling back Schubert classes along the natural embedding $\iota_n$.
In the nonequivariant case, we have the skew Schur functions
\[
s_{\lambda}(\xx, \yy) = \sum_{\mu \subseteq \lambda} s_{\lambda/\mu}(\yy) s_{\mu}(\xx),
\]
which can also be described by the skew operation, which is the adjoint action to multiplication and geometrically corresponds to the cap product.
Therefore, we have
\[
\inner{s_{\mu}^{\perp}s_{\lambda}}{s_{\nu}} = \inner{s_{\lambda}}{s_{\mu}s_{\nu}},
\qquad\qquad
s_{\lambda/\mu}(\yy) = s_{\mu}^{\perp} s_{\lambda}(\yy) = \sum_{\nu} c_{\mu\nu}^{\lambda} s_{\nu}(\yy),
\]
which is reflected in a Littlewood--Richardson rule that uses skew tableau.
Since our multiplication rule uses edge-labeled tableaux, we should be able to define the dual basis under the Hall inner product up to a simple factor based on the Cauchy identity from~\cite[Eq.~(1.4)]{Molev-dualschur}.
Furthermore, edge labeled tableaux were used to prove the equivariant saturation theorem by Anderson, Richmond, and Yong~\cite{ARY} and are expected to be natural objects for the orthogonal and isotropic Grassmannian~\cite{RYY19}.

Molev's dual Schur functions do not have a nice branching rule formulation and the tableau weights are described by rational functions.
Motivated by the above and finding a modification of Molev's dual functions that are closer to Schur functions, we introduce the generating function $\eschur^{\lambda}(\xx|\aaa)$ of edge labeled tableaux of shape $\lambda$, which we call the edge Schur functions.
We use a different weight that is reminiscent of the weight used in computing (refined) symmetric Grothendieck polynomials~\cite{CP21}.
We first construct a solvable lattice model whose partition function is $E^{\lambda}(\xx|\aaa)$ and use the integrability to show (Theorem~\ref{thm:elambda-symmetric}) that $E^{\lambda}(\xx|\aaa)$ is a symmetric function in the $\xx$ variables (in an appropriate completion).
Our lattice model is based on the branching rule and the simple description of the skew edge Schur function in a single variable.
It is a simple consequence that the edge label functions form a basis since $E^{\lambda}(\xx|\aaa) = s_{\lambda}(\xx) + HOT$, where $HOT$ are higher degree terms.

Next, we derive a skew Cauchy-like identity with factorial Schur functions (Theorem~\ref{thm:cauchy}) by showing the corresponding transfer matrices commute up to a scalar.
Here we use the lattice model for $s_{\lambda}(\xx|\aaa)$ given by Zinn-Justin~\cite{ZJ09}.
Consequently, we could derive edge labeled tableau by instead first constructing a lattice model with these properties and then building the combinatorics.
In Section~\ref{sec:edge_variations}, we then discuss some variations and their relationship with $\widehat{s}_{\lambda}(\xx|\aaa)$.
In particular, we see that $\widehat{s}_{\lambda/\mu}(\xx|\alpha) = s_{\lambda/\mu}[\xx / (1 - \alpha\xx)]$, which was recently shown to be connected with quantum transportation~\cite{BN21}.
This is also a specialization of the canonical Grothendieck polynomials introduced by Yeliussizov~\cite{Yel17}, which loosely explains the resemblance with K-theoretic Schubert calculus.
However, this seems to be different than the $K$-theory analogs for the direct sum map~\eqref{eq:grassmannian_embedding} studied in~\cite{TY10}.

Finally, a natural question would be to compute the Schur expansion of the edge Schur functions.
We do so by constructing the crystal structure on edge labeled tableau that could be considered as the analog of~\cite{MPS21,HS20} where the atomic object are the diagonals.
Indeed, we explicitly biject each diagonal with a hook shape and then use the general theory to obtain the result.
We see this as a reflection of the homology divided difference operators of~\cite[Prop.~8.5]{LLS21} that act by adding boxes on diagonals (\textit{cf.}~\cite[Rem.~5.8]{Yel17}).
Additionally, the crystal structure is described using the usual description on semistandard tableau (see, \textit{e.g.},~\cite{BS18}) except for one ``correction'' when the result fails to be an edge labeled tableau.
Likewise, there exists an analog of the uncrowding algorithm~\cite{Buch02,HS20} that also uses the Robinson--Schensted--Knuth (RSK) algorithm.
This yields a crystal isomorphism with the recording tableau describing the element in the highest weight crystal $B(\lambda)$.

This paper is organised as follows:
In Section~\ref{sec:background}, we recall necessary tableaux and introduce the edge Schur functions.
In Section~\ref{sec:latticemodels}, interpret $\eschur^{\lambda}(\xx_n|\aaa)$ using an integrable lattice model and prove the skew Cauchy identity (Theorem~\ref{thm:cauchy}).
Finally, in Section~\ref{sec:crystal_structure} we study a crystal structure on $\elt$ to give a Schur expansion of $\eschur^{\lambda}$ and describe the uncrowding algorithm.

\subsection*{Acknowledgements}

The authors thank Jules Lamers and Paul Zinn-Justin for useful conversations.
The authors also thank Mark Shimozono, Hugh Thomas, Alexander Yong, and Paul Zinn-Justin for comments on earlier drafts of this manuscript.
AG thanks the University of Queensland for its hospitality during his visit in February, 2021.
The majority of these results were obtained while TS was affiliated with the School of Mathematics and Physics at the University of Queensland.
This work benefited from computations using {\sc SageMath}~\cite{sage,combinat}.
This work was partly supported by Osaka City University Advanced Mathematical Institute (MEXT Joint Usage/Research Center on Mathematics and Theoretical Physics JPMXP0619217849).

\section{Tableau generating functions}
\label{sec:background}

We give the necessary background on tableaux and generating functions.

Fix a positive integer $n$.
Denote $[n] := \{1, 2, \dotsc, n\}$.
A \defn{partition} $\lambda$ is a weakly decreasing finite sequence of positive integers, and we draw the Young diagram of $\lambda$ using English convention.
We will often extend $\lambda$ with a number (possibly infinite) of trailing zeros.
Let $\ell(\lambda)$ denote the length of $\lambda$, which is the index of the last positive entry of $\lambda$.
For partitions $\mu \subseteq \lambda$, a skew partition $\lambda / \mu$ is the set theoretical difference of the Young diagrams.
For a box $\bbb \in \lambda / \mu$ in row $i$ and column $j$, define the \defn{content} $c(\bbb) = j - i$.
Let $\xx = (x_1, x_2, \ldots)$ denote a countable sequence of indeterminates, and let $\xx_n = (x_1, x_2, \dotsc, x_n, 0, 0, \ldots)$ denote the specialization $x_i = 0$ for all $i > n$.
We define similar notation for other bold letters, for example $\yy = (y_1, y_2, \ldots)$ and $\yy_n = (y_1, y_2, \dotsc, y_n, 0, \ldots)$.
We make one exception for $\aaa = (\ldots, a_{-1}, a_0, a_1, \ldots)$.

\subsection{Factorial Schur functions}

For a skew shape $\lambda/\mu$, define a \defn{semi-standard tableau} as a filling of boxes of the Young diagram with positive integers such that entries weakly increase along rows (left to right) and strictly increase down columns (top to bottom).
Let $\ssyt_n(\lambda/\mu)$ denote the set of semi-standard tableaux of shape $\lambda / \mu$ with entries in $[n]$.
For every $T\in \ssyt_n(\lambda/\mu)$, define the \defn{weight} $\wt(T) = \prod_{i=1}^n x_i^{m_i}$, where $m_i$ is the number of times $i$ appears in $T$.
The \defn{reverse Far-Eastern reading word} is where we read the entries in each column from bottom-to-top and read the columns from left-to-right.
We say $T$ has the \defn{lattice property} if in the reverse Far-Eastern reading word $\sft = \sft_1 \sft_2 \dotsm \sft_{\abs{\lambda/\mu}}$ the number of $i$'s in any terminal word $\sft_k \dotsm \sft_{\abs{\lambda/\mu}}$ is at least as many as the number of $(i+1)$'s.
This is also known as the reading word being Yamanouchi.

For a skew tableau $T$, by abuse of notation, we let $\bbb \in T$ denote an entry in a box of $T$ and $c(\bbb)$ the content of the box (which does not depend on the entry).
A \defn{skew Schur polynomial} is defined as the generating series over such tableaux:
\[
     s_{\lambda/\mu}(\xx_n) = \sum_{T \in \ssyt_n(\lambda/\mu)} \prod_{\bbb \in T} x_{\bbb} = \sum_{T \in \ssyt_n(\lambda/\mu)} \wt(T),
\]
When $\mu$ is empty partition, we get an ordinary \defn{Schur polynomial}, which form a basis $\{s_{\lambda}(\xx_n)\}_{\lambda}$ (over all partitions $\lambda$) for the ring of symmetric polynomials $\ZZ[\xx_n]$, including in the $n \to \infty$ limit.
They are a orthonormal basis for the ring of symmetric polynomials under the \defn{Hall inner product}, which we take as our definition $\inner{s_{\lambda}(\xx_n)}{s_{\eta}(\xx_n)} = \delta_{\lambda\eta}$.
Hence, they satisfy the well known \defn{Cauchy identity} (see, \textit{e.g.},~\cite[Lemma~7.9.2]{ECII}):
\begin{equation}
\label{eq:cauchyforordinaryschur}
\sum_{\lambda} s_{\lambda}(\xx_n) s_{\lambda}(\yy_m)=\prod_{\substack{1\leq i\leq n,\\[0.2em]1\leq j\leq m}}\frac{1}{1-x_i y_j}.
\end{equation}
The self-duality implies a remarkable formula for the structure constants of $s_{\lambda}$.
We have
\[
    s_{\nu}(\xx_n) s_{\mu}(\xx_n) = \sum_{\lambda} c^{\lambda}_{\nu,\mu} s_{\lambda}(\xx_n),
\]
where $c^{\lambda}_{\nu,\mu}$ are the Littlewood--Richardson coefficients and are independent of $n$, and
\[
    s_{\lambda/\mu}(\xx_n) = \sum_{\nu} c^{\lambda}_{\nu,\mu} s_{\nu}(\xx_n),
\]
where the summation is over all partition $\nu \subseteq \lambda$ and $c^{\lambda}_{\nu,\mu}$ is equal to the number of $\ssyt$ of shape $\lambda/\nu$ with content $\mu$ whose reading word satisfies the lattice property.

The \defn{factorial Schur polynomial} is a generalization of $s_{\lambda/\mu}$ defined as
\[
s_{\lambda/\mu}(\xx | \aaa) = \sum_{T \in \ssyt_n(\lambda/\mu)} \prod_{\alpha \in T} (x_{\alpha} - a_{\alpha + c(\alpha)}).
\]
When $\aaa = 0$, we recover the skew Schur functions $s_{\lambda/\mu}(\xx_n) = s_{\lambda/\mu}(\xx_n | 0)$.

\begin{ex}
For partition $\lambda=(2,0)$ we have
\[
\begin{array}{rc@{\quad+\quad}c@{\quad+\quad}c}
s_{(2,0)}(\xx_2 | \aaa) = & \ytableaushort{11} & \ytableaushort{12} & \ytableaushort{22}\,,
\\[5pt]
 = & (x_1 - a_1) (x_1 - a_2) & (x_1 - a_1)(x_2 - a_3) & (x_2 - a_2)(x_2 - a_3).
 \end{array}
\]
\end{ex}

We need some definitions to state a formula for structure constants for $s_{\lambda}(\xx | \aaa)$.

\subsection{Edge Schur functions}

For a skew diagram $\nu/\lambda$, an \defn{edge labeled tableau} is a semistandard tableau where we allow each horizontal edge to also contain a finite set of positive integers, but we do not compare entries between two edges.
Therefore, the smallest of the set is strictly greater than the entry in the box above and the largest is strictly smaller than the entry in the box below.
We also allow edge labels to appear on the bottom row of $\mu$ which we consider to have a ``zeroth row'' of infinite length, so there are infinitely many edge labeled tableau of any skew shape.
We call the an entry in one of the sets on an edge an \defn{edge label}.
Let $\elt_n(\lambda/\mu)$ denote the set of edge labeled tableaux of shape $\lambda/\mu$ with entries and edge labels in $[n]$.

We now extend the definition of weight to an edge labeled tableau $T$.
Roughly speaking, the $\xx_n$ variables count the entries and the $\aaa$ variables count the diagonals the edge labels are located on.
More precisely, for $T\in \elt_n(\lambda/\mu)$, define
\[
\wt(T) = \prod_{\bbb \in T} x_{\bbb} \prod_{\ell \in A_{\bbb}} x_{\ell} a_{c(\bbb)},
\]
where $A_{\bbb}$ is the set labeling the \emph{upper} edge of the box $\bbb$.

\begin{ex}
An edge labeled tableau of shape $(3,2,2)$ and its weight is
\[
\ytableausetup{boxsize=1.5em}
T = \ytableaushort{1{\elb{1}{2}}{\elbb{124}{5}},{\elb{2}{3}}3,4{\elb{4}{5}}}\,,
\qquad\qquad
\begin{aligned}
\wt(T) & = x_1^2 x_2 x_3 x_4^2 x_5 (x_3 a_{-2}) (x_5 a_{-2}) (x_2 a_0) (x_1 a_2) (x_2 a_2) (x_4 a_2)
\\ & = a_{-2}^2 a_0 a_2^3 \cdot x_1^3 x_2^3 x_3^3 x_4^2 x_5^2.
\end{aligned}
\]
\end{ex}

\begin{ex}
\label{ex:skew_ELT}
An edge labeled tableau of shape $(5,3,2)/(4)$ is and weight is
\[
\ytableausetup{boxsize=1.5em}
T = \ytableaushort{{\cdot}{\elb{\cdot}{1}}{\cdot}{\elb{\cdot}{36}}{\elb{1}{25}},{\elb{1}{24}}{\elb{2}{3}}5,5{\elb{5}{6}}}\,,
\qquad\qquad
\wt(T) = a_{-2}^3 a_{-1} a_0 a_2^2 a_3^2 \cdot x_1^3 x_2^3 x_3^2 x_4^2 x_5^4 x_6^2.
\]
\end{ex}

\begin{dfn}[edge Schur functions]
We define the \defn{edge function} $\eschur^{\lambda/\mu}(\xx|\aaa)$ as a generating series of $\elt(\lambda/\mu)$:
\[
\eschur^{\lambda/\mu}(\xx_n | \aaa) = \sum_{T \in \elt_n(\lambda/\mu)} \wt(T).
\]
If $\mu \not\subseteq \lambda$, then define $\eschur^{\lambda/\mu}(\xx_n | \aaa) = 0$.
\end{dfn}

From the definition, it is clear that $\eschur^{\lambda/\mu}(\xx_n|\aaa)$ is an element of the formal power series ring $\ZZ[\xx_n | \aaa] := \ZZ[\xx_n][\![\aaa]\!]$ and are of the form $\eschur^{\lambda/\mu} = s_{\lambda/\mu} + HOT$, where $HOT$ denotes higher order terms with regards to the total degree (or just total degree in either $\xx_n$ or $\aaa$).
As a consequence, as we will show $\eschur^{\lambda}(\xx_n|\aaa)$ is a symmetric function, then $\{\eschur^{\lambda}\}_{\lambda}$ forms a basis since the change of basis is triangular with respect to the degree lexicographic order.
We will be implicitly working in a completion of the ring of symmetric functions similar to the case of (refined) Grothendieck functions, which does not alter the computations in any meaningful way.

%
%
%

\section{Lattice models}
\label{sec:latticemodels}

We shall use lattice model formulation to prove that $\eschur^{\lambda}(\xx | \aaa)$ are symmetric polynomials in the $\xx$ variables and our Cauchy identity.
To do so, we first describe the standard diagrammatic formalism and the interpretation as linear operators.
Next, we reinterpret $\eschur^{\lambda}(\xx|\aaa)$ as the partition function of an integrable lattice model (in the sense that there is a solution to the Yang--Baxter equation) through a direct translation of the combinatorial information.
Using a known integrable five-vertex model for the factorial Schur polynomials, we show that pairing this with a dual version of our lattice model for $\eschur^{\lambda}(\xx|\aaa)$ also has a solution to the Yang--Baxter equation.
Our proof concludes with the train argument.

\subsection{Conventions}

The data for a \defn{lattice model} $\fM$ (also known as a \defn{vertex model}) consists of
\begin{itemize}
\item a subset $G$ of the square grid on $\ZZ_{>0} \times \ZZ$ called vertices,
\item parameters $\xx$ and $\aaa$ that are assigned to each horizontal and vertical edge, respectively,
\item a set of edge labels $X$, and
\item a \defn{Boltzmann weight} function $\Bwt \colon \mcC \to \mathcal{R}$ that maps possible configurations around each vertex $\mcC$ to a fixed ring $\mathcal{R}$ (which can depend on the edge parameters).
\end{itemize}
We consider the horizontal (resp.\ vertical) lines as being oriented left to right (resp.\ bottom to top).
The \defn{boundary} will be the edges that only have one endpoint in $G$, and we will often associate with a model a fixed set of boundary conditions.
A \defn{state} $\mcS$ of $\fM$ will be an assignment of labels to all edges with at least one endpoint in $G$, and the Boltzmann weight of $\mcS$ is $\Bwt(\mcS) = \prod_{v \in G} \Bwt(v)$ the product of Boltzmann weights of each local configuration around each vertex.
The \defn{partition function} of $\fM$ is $Z(\fM) = \sum_{\mcS \in \fM} \Bwt(\mcS)$ the sum of the Boltzmann weights of the states of $\fM$.

In this paper, will only consider $G = [n] \times [m, M]$ with possibly $n,m,M$ to be infinite.
Additionally, we use $X = \{0,1\}$, which means there are 16 possible inputs for the Boltzmann weight function.
We will impose the square-ice condition, which means that anytime the sum of the incoming edges does not equal the sum of the outgoing edges, the Boltzmann weight of this configuration must be $0$.
This means we have at most six possible configurations with a potentially nonzero Boltzmann weight.
We will think of a $0$ as being a hole and a $1$ as being a particle, and the square-ice condition is a conservation of particles.

Furthermore, given our choice of $X$, we can attach a two dimensional vector spaces
\[
H_i \iso \CC^{2} = \CC\{\epsilon_0, \epsilon_1\},
\qquad\qquad V_j \iso \CC^2 = \CC\{\ket{0}, \ket{1}\},
\]
to $i$-th horizontal line (counted from the bottom) called the \defn{quantum space} and the $j$-th vertical line called the \defn{physical space}, respectively.
We reformulate the Boltzmann weights as a linear operator called \defn{$L$-matrix} $L \colon H_i\otimes V_j \to H_i\otimes V_j$.
We note an important convention choice in the way we write our matrices.
The ordered basis we will use for $H_i \otimes V_j$ is
\[
(\epsilon_0 \otimes \ket{0}, \epsilon_0 \otimes \ket{1}, \epsilon_1 \otimes \ket{0}, \epsilon_1 \otimes \ket{1}).
\]
Hence, we draw the $L$-matrix graphically as
\[
\begin{pmatrix}
 \label{random_lmat}
\begin{tikzpicture}[scale=0.6,baseline=4pt,rotate=45]
\draw (0,0)--(1,1);
\draw (0,1)--(1,0);
\ebull{0}{1}{0.1};
\ebull{1}{0}{0.1};
\ebull{1}{1}{0.1};
\ebull{0}{0}{0.1};
\end{tikzpicture}& 0 & 0 & 0 \\[1ex]
0 & \begin{tikzpicture}[scale=0.6,baseline=4pt,rotate=45]
\draw (0,0)--(1,1);
\draw (0,1)--(1,0);
\bbull{0}{0}{0.1};
\ebull{0}{1}{0.1};
\ebull{1}{0}{0.1};
\bbull{1}{1}{0.1};
\end{tikzpicture} & \begin{tikzpicture}[scale=0.6,baseline=4pt,rotate=45]
\draw (0,0)--(1,1);
\draw (0,1)--(1,0);
\ebull{0}{1}{0.1};
\ebull{1}{1}{0.1};
\bbull{1}{0}{0.1};
\bbull{0}{0}{0.1};
\end{tikzpicture} & 0 \\[1ex]
0 &\begin{tikzpicture}[scale=0.6,baseline=4pt,rotate=45]
\draw (0,0)--(1,1);
\draw (0,1)--(1,0);
\bbull{0}{1}{0.1};
\ebull{0}{0}{0.1};
\ebull{1}{0}{0.1};
\bbull{1}{1}{0.1};
\end{tikzpicture} & \begin{tikzpicture}[scale=0.6,baseline=4pt,rotate=45]
\draw (0,0)--(1,1);
\draw (0,1)--(1,0);
\bbull{1}{0}{0.1};
\bbull{0}{1}{0.1};
\ebull{1}{1}{0.1};
\ebull{0}{0}{0.1};
\end{tikzpicture} & 0 \\[1ex]
0 & 0 & 0 & \begin{tikzpicture}[scale=0.6,baseline=4pt,rotate=45]
\draw (0,0)--(1,1);
\draw (0,1)--(1,0);
\bbull{1}{0}{0.1};
\bbull{0}{1}{0.1};
\bbull{1}{1}{0.1};
\bbull{0}{0}{0.1};
\end{tikzpicture} \\
\end{pmatrix}_{ij}
\in \End(H_i \otimes V_j).
\]

Let us look at the subset of all physical space $\mcV \subseteq \bigotimes_{j \in \ZZ} V_j$ such that for any $\cdots \ket{v_{-\frac{1}{2}}}_{-1} \otimes \ket{v_{\frac{1}{2}}}_0 \otimes \ket{v_{\frac{3}{2}}}_1 \otimes \cdots \in \mcV$, we have $v_{-j+\frac{1}{2}} = 1$ and $v_{j+\frac{1}{2}} = 0$ for all $j \gg 1$ and $\sum_{j \in [-m,m]} v_j = m$ for all $m \gg 1$.
We can identify elements in $\mcV$ with partitions by using the \defn{Maya diagram} (or \defn{$01$-sequence}) of a partition $\lambda$ via the well-known correspondence of vertical (resp.\ horizontal) steps of the boundary of the Young diagram being $1$ (resp.~$0$).
We denote the corresponding vector by $\ket{\lambda} \in \mcV$.
We have shifted the indices for $v_i$ to be half integers to match the midpoints of the edges from the Maya diagram of $\lambda$, and so the empty partition, also called the \defn{vacuum element}, corresponds to when $v_j = 1$ for $j < 0$ and $v_j = 0$ for $j \geq 0$
or $\ket{\emptyset} = \cdots \otimes  \ket{1}_{-2} \otimes \ket{1}_{-1} \otimes \ket{0}_0 \otimes \ket{0}_1 \otimes \cdots$.

\begin{ex}
The partition $\lambda = (3,3,1)$ is
\[
\begin{tikzpicture}[scale=0.8,rotate=-45,>=latex]
\draw (0,0) grid (3,-2);
\draw (0,-2) grid (1,-3);
\draw[dashed,->] (5,0) -- (7,0);
\draw(3,0)--(5,0);
\draw(4,0)--(4,-0.2);
\draw(5,0)--(5,-0.2);
\draw(0,-3)--(0,-4);
\draw(0,-4)--(0.2,-4);
\draw[dashed,->] (0,-4) -- (0,-6);
\foreach \x/\h in {0/2.5,1/2.5,3/2.5} {
  \foreach\y in {0,0.25,...,\h} {
    \bbull{\x+\y}{-3.5+\x-\y}{0.07};
    \ebull{\x+.5+\y}{-3+\x-\y}{0.07};
  }
}
\foreach\y in {0.5,0.75,...,2.5}
  \bbull{2.5+\y}{-1-\y}{0.07};
\foreach\y in {0.5,0.75,...,2.5}
  \ebull{2+\y}{-1.5-\y}{0.07};
\foreach\y in {0.5,0.75,...,2.5}
  \ebull{4+\y}{0.5-\y}{0.07};
\draw[thick,<->](1.5,-8)--(8.5,-1);
\ebull{7}{-2.5}{0.1};
\ebull{6.5}{-3}{0.1};
\bbull{6}{-3.5}{0.1};
\bbull{5.5}{-4}{0.1};
\ebull{5}{-4.5}{0.1};
\ebull{4.5}{-5}{0.1};
\bbull{4}{-5.5}{0.1};
\ebull{3.5}{-6}{0.1};
\bbull{3}{-6.5}{0.1};
\foreach\x in{0,0.5,1,1.5,2,2.5,3,3.5,4,4.5}
{\draw(2.75+\x,-6.75+\x)--(2.6+\x,-6.6+\x);}
\end{tikzpicture}
\]
and corresponds to the vector in $\mcV$ is
\[
\ket{\lambda} = \cdots \otimes \ket{1}_{-4} \otimes \ket{0} _{-3} \otimes\ket{1}_{-2} \otimes \ket{0}_{-1} \otimes \ket{0}_0 \otimes \ket{1}_1 \otimes \ket{1}_2 \otimes \ket{0}_3 \otimes \ket{0}_4 \otimes \cdots.
\]
\end{ex}

If we consider $\lambda$ to be contained in a $k \times m$ rectangle, we can restrict to a finite binary string of length $(k+m)$ with exactly $k$ $1$'s and $m$ $0$'s.
If we restrict to $\ell(\lambda) \leq k$, then we can represent a partition as an (semi) infinite binary string with exactly $k$ $1$'s.

\begin{ex}
The binary string $1010100100$ is associated to the partition $\lambda = (4,2,1,0)$ contained in $4\times 6$ rectangle:
\[
\begin{tikzpicture}[scale=0.6]
\draw[fill=gray] (0,3) rectangle (4,2);
\draw[fill=gray] (0,2) rectangle (2,1);
\draw[fill=gray] (0,0) rectangle (1,1);
\draw (0,-1) grid (6,3);
\node at (5.5,2.7) {$\ss 0$};
\node at (4.5,2.7) {$\ss 0$};
\node at (4.2,2.3) {$\ss 1$};
\node at (3.5,1.7) {$\ss 0$};
\node at (2.5,1.7) {$\ss 0$};
\node at (2.2,1.3) {$\ss 1$};
\node at (1.5,0.7) {$\ss 0$};
\node at (1.2,0.3) {$\ss 1$};
\node at (0.5,-0.3) {$\ss 0$};
\node at (0.2,-0.7) {$\ss 1$};
\end{tikzpicture}
\]
\end{ex}

Throughout this paper, we shall move freely back and forth between partitions, binary strings when the rectangle is clear, and the vectors $\ket{\lambda}$.

\subsection{Lattice model for edge Schur functions}

In what follows, we give a lattice model whose partition function is $\eschur^{\lambda}(\xx_n|\aaa)$.
Our convention for drawing is a product of matrices read right-to-left is drawn from southwest to northeast.

Define the $L$-matrix with the nonzero Boltzmann weights:
\begin{equation}
\label{eq:dual_schur_vertices}
\begin{array}{c*{4}{@{\hspace{40pt}}c}}
\textsf{a}_1 & \textsf{b}_1 & \textsf{b}_2 & \textsf{c}_1 & \textsf{c}_2
\\
\begin{tikzpicture}[scale=0.4,baseline=-2pt]
\draw[arrow=0.25,dscol] (-1,0) node[left,black] {$\ss {0}$} -- (1,0) node[right,black] {$\ss {0}$};
\draw[arrow=0.25] (0,-1) node[below] {$\ss {0}$} -- (0,1) node[above] {$\ss{0}$};
\end{tikzpicture}
&
\begin{tikzpicture}[scale=0.4,baseline=-2pt]
\draw[arrow=0.25,dscol] (-1,0) node[left,black] {$\ss {0}$} -- (1,0) node[right,black] {$\ss {0}$};
\draw[arrow=0.25] (0,-1) node[below] {$\ss {1}$} -- (0,1) node[above] {$\ss {1}$};
\end{tikzpicture}
&
\begin{tikzpicture}[scale=0.4,baseline=-2pt]
\draw[arrow=0.25,dscol] (-1,0) node[left,black] {$\ss {1}$} -- (1,0) node[right,black] {$\ss {1}$};
\draw[arrow=0.25] (0,-1) node[below] {$\ss {0}$} -- (0,1) node[above] {$\ss {0}$};
\end{tikzpicture}
&
\begin{tikzpicture}[scale=0.4,baseline=-2pt]
\draw[arrow=0.25,dscol] (-1,0) node[left,black] {$\ss {0}$} -- (1,0) node[right,black] {$\ss {1}$};
\draw[arrow=0.25] (0,-1) node[below] {$\ss {1}$} -- (0,1) node[above] {$\ss {0}$};
\end{tikzpicture}
&
\begin{tikzpicture}[scale=0.4,baseline=-2pt]
\draw[arrow=0.25,dscol] (-1,0) node[left,black] {$\ss {1}$} -- (1,0) node[right,black] {$\ss {0}$};
\draw[arrow=0.25] (0,-1) node[below] {$\ss {0}$} -- (0,1) node[above] {$\ss {1}$};
\end{tikzpicture}
\\
1 + a_j x_i & 1 & x_i & 1 & x_i
\end{array}
\end{equation}
which is given as the linear operator $L_{ij} \colon H_i \otimes V_j \to H_i \otimes V_j$ by
\[
 L_{ij}(x_i, a_j) 
=
\begin{pmatrix}
1+a_j x_i & 0 & 0 & 0 \\[1ex]
0 & 1 & 1 & 0 \\[1ex]
0 & x_i & x_i & 0 \\[1ex]
0 & 0 & 0 & 0 \\
\end{pmatrix}_{ij}
=
\begin{pmatrix}
\begin{tikzpicture}[scale=0.6,baseline=4pt,rotate=45]
\draw(0,0)--(1,1);
\draw[dscol] (0,1)--(1,0);
\ebull{0}{1}{0.1};
\ebull{1}{0}{0.1};
\ebull{1}{1}{0.1};
\ebull{0}{0}{0.1};
\end{tikzpicture}& 0 & 0 & 0 \\[1ex]
0 & \begin{tikzpicture}[scale=0.6,baseline=4pt,rotate=45]
\draw(0,0)--(1,1);
\draw[dscol] (0,1)--(1,0);
\bbull{0}{0}{0.1};
\ebull{0}{1}{0.1};
\ebull{1}{0}{0.1};
\bbull{1}{1}{0.1};
\end{tikzpicture} & \begin{tikzpicture}[scale=0.6,baseline=4pt,rotate=45]
\draw (0,0)--(1,1);
\draw[dscol] (0,1)--(1,0);
\ebull{0}{1}{0.1};
\ebull{1}{1}{0.1};
\bbull{1}{0}{0.1};
\bbull{0}{0}{0.1};
\end{tikzpicture} & 0 \\[1ex]
0 &\begin{tikzpicture}[scale=0.6,baseline=4pt,rotate=45]
\draw (0,0)--(1,1);
\draw[dscol] (0,1)--(1,0);
\bbull{0}{1}{0.1};
\ebull{0}{0}{0.1};
\ebull{1}{0}{0.1};
\bbull{1}{1}{0.1};
\end{tikzpicture} & \begin{tikzpicture}[scale=0.6,baseline=4pt,rotate=45]
\draw (0,0)--(1,1);
\draw[dscol] (0,1)--(1,0);
\bbull{1}{0}{0.1};
\bbull{0}{1}{0.1};
\ebull{1}{1}{0.1};
\ebull{0}{0}{0.1};
\end{tikzpicture} & 0 \\[1ex]
0 & 0 & 0 & \begin{tikzpicture}[scale=0.6,baseline=4pt,rotate=45]
\draw (0,0)--(1,1);
\draw[dscol] (0,1)--(1,0);
\bbull{1}{0}{0.1};
\bbull{0}{1}{0.1};
\bbull{1}{1}{0.1};
\bbull{0}{0}{0.1};
\end{tikzpicture} \\
\end{pmatrix}_{ij}
\in 
\End(H_i \otimes H_j).
\]
It is convenient to define a dual model where the vertices that are obtained from~\eqref{eq:dual_schur_vertices} by flipping the vertices upside down and interchanging $0 \leftrightarrow 1$ on the horizontal edges.
Thus, we define the $L^*$-matrix as
\begin{gather*}
\begin{array}{c*{4}{@{\hspace{40pt}}c}}
\begin{tikzpicture}[scale=0.4,baseline=-2pt]
\draw[arrow=0.25,ddscol] (-1,0) node[left,black] {$\ss {1}$} -- (1,0) node[right,black] {$\ss {1}$};
\draw[arrow=0.25] (0,-1) node[below] {$\ss {0}$} -- (0,1) node[above] {$\ss{0}$};
\end{tikzpicture}
&
\begin{tikzpicture}[scale=0.4,baseline=-2pt]
\draw[arrow=0.25,ddscol] (-1,0) node[left,black] {$\ss {1}$} -- (1,0) node[right,black] {$\ss {1}$};
\draw[arrow=0.25] (0,-1) node[below] {$\ss {1}$} -- (0,1) node[above] {$\ss {1}$};
\end{tikzpicture}
&
\begin{tikzpicture}[scale=0.4,baseline=-2pt]
\draw[arrow=0.25,ddscol] (-1,0) node[left,black] {$\ss {0}$} -- (1,0) node[right,black] {$\ss {0}$};
\draw[arrow=0.25] (0,-1) node[below] {$\ss {0}$} -- (0,1) node[above] {$\ss {0}$};
\end{tikzpicture}
&
\begin{tikzpicture}[scale=0.4,baseline=-2pt]
\draw[arrow=0.25,ddscol] (-1,0) node[left,black] {$\ss {1}$} -- (1,0) node[right,black] {$\ss {0}$};
\draw[arrow=0.25] (0,-1) node[below] {$\ss {0}$} -- (0,1) node[above] {$\ss {1}$};
\end{tikzpicture}
&
\begin{tikzpicture}[scale=0.4,baseline=-2pt]
\draw[arrow=0.25,ddscol] (-1,0) node[left,black] {$\ss {0}$} -- (1,0) node[right,black] {$\ss {1}$};
\draw[arrow=0.25] (0,-1) node[below] {$\ss {1}$} -- (0,1) node[above] {$\ss {0}$};
\end{tikzpicture}
\\
1 + a_j x_i & 1 & x_i & 1 & x_i
\end{array}
\\
L^{*}_{ij}(x_i, a_j) =
\begin{pmatrix}
x_i & 0 & 0 & 0 \\[1ex]
0 & 0 & x_i & 0 \\[1ex]
0 & 1 & 1+a_j x_i & 0 \\[1ex]
0 & 0 & 0 & 1 \\
\end{pmatrix}_{ij}
=
 \begin{pmatrix}
\begin{tikzpicture}[scale=0.6,baseline=4pt,rotate=45]
\draw(0,0)--(1,1);
\draw[ddscol] (0,1)--(1,0);
\ebull{0}{1}{0.1};
\ebull{1}{0}{0.1};
\ebull{1}{1}{0.1};
\ebull{0}{0}{0.1};
\end{tikzpicture}& 0 & 0 & 0 \\[1ex]
0 & \begin{tikzpicture}[scale=0.6,baseline=4pt,rotate=45]
\draw(0,0)--(1,1);
\draw[ddscol] (0,1)--(1,0);
\bbull{0}{0}{0.1};
\ebull{0}{1}{0.1};
\ebull{1}{0}{0.1};
\bbull{1}{1}{0.1};
\end{tikzpicture} & \begin{tikzpicture}[scale=0.6,baseline=4pt,rotate=45]
\draw (0,0)--(1,1);
\draw[ddscol] (0,1)--(1,0);
\ebull{0}{1}{0.1};
\ebull{1}{1}{0.1};
\bbull{1}{0}{0.1};
\bbull{0}{0}{0.1};
\end{tikzpicture} & 0 \\[1ex]
0 &\begin{tikzpicture}[scale=0.6,baseline=4pt,rotate=45]
\draw (0,0)--(1,1);
\draw[ddscol] (0,1)--(1,0);
\bbull{0}{1}{0.1};
\ebull{0}{0}{0.1};
\ebull{1}{0}{0.1};
\bbull{1}{1}{0.1};
\end{tikzpicture} & \begin{tikzpicture}[scale=0.6,baseline=4pt,rotate=45]
\draw (0,0)--(1,1);
\draw[ddscol] (0,1)--(1,0);
\bbull{1}{0}{0.1};
\bbull{0}{1}{0.1};
\ebull{1}{1}{0.1};
\ebull{0}{0}{0.1};
\end{tikzpicture} & 0 \\[1ex]
0 & 0 & 0 & \begin{tikzpicture}[scale=0.6,baseline=4pt,rotate=45]
\draw (0,0)--(1,1);
\draw[ddscol] (0,1)--(1,0);
\bbull{1}{0}{0.1};
\bbull{0}{1}{0.1};
\bbull{1}{1}{0.1};
\bbull{0}{0}{0.1};
\end{tikzpicture}
\allowdisplaybreaks \\
\end{pmatrix}_{ij}
\in 
\End(H_i \otimes V_j).
\end{gather*}
It will sometimes be convenient to consider $L^*_{ij} \in \End(H_i^* \otimes V_j)$ in the sequel.
We refer the vertices
$\begin{tikzpicture}[scale=0.6,baseline=4pt,rotate=45]
\draw (0,0)--(1,1);
\draw[ddscol] (0,1)--(1,0);
\bbull{1}{0}{0.1};
\bbull{0}{1}{0.1};
\ebull{1}{1}{0.1};
\ebull{0}{0}{0.1};
\end{tikzpicture}$
and
$\begin{tikzpicture}[scale=0.6,baseline=4pt,rotate=45]
\draw (0,0)--(1,1);
\draw[dscol] (0,1)--(1,0);
\ebull{1}{0}{0.1};
\ebull{0}{1}{0.1};
\ebull{1}{1}{0.1};
\ebull{0}{0}{0.1};
\end{tikzpicture}$
as the \defn{deformed vertices}.

\begin{prop}
This vertex model is \defn{integrable}, which means there exists an \defn{$R$-matrix}
\[
 R_{ij}\left( \frac{x_j}{x_i} \right)=
 \begin{pmatrix}
\begin{tikzpicture}[scale=0.6,baseline=4pt]
\draw[dscol] (0,0)--(1,1);
\draw[dscol] (0,1)--(1,0);
\ebull{0}{1}{0.1};
\ebull{1}{0}{0.1};
\ebull{1}{1}{0.1};
\ebull{0}{0}{0.1};
\end{tikzpicture}& 0 & 0 & 0 \\[1ex]
0 & \begin{tikzpicture}[scale=0.6,baseline=4pt]
\draw[dscol] (0,0)--(1,1);
\draw[dscol] (0,1)--(1,0);
\bbull{0}{0}{0.1};
\ebull{0}{1}{0.1};
\ebull{1}{0}{0.1};
\bbull{1}{1}{0.1};
\end{tikzpicture} & \begin{tikzpicture}[scale=0.6,baseline=4pt]
\draw[dscol] (0,0)--(1,1);
\draw[dscol] (0,1)--(1,0);
\ebull{0}{1}{0.1};
\ebull{1}{1}{0.1};
\bbull{1}{0}{0.1};
\bbull{0}{0}{0.1};
\end{tikzpicture} & 0 \\[1ex]
0 &\begin{tikzpicture}[scale=0.6,baseline=4pt]
\draw[dscol] (0,0)--(1,1);
\draw[dscol] (0,1)--(1,0);
\bbull{0}{1}{0.1};
\ebull{0}{0}{0.1};
\ebull{1}{0}{0.1};
\bbull{1}{1}{0.1};
\end{tikzpicture} & \begin{tikzpicture}[scale=0.6,baseline=4pt]
\draw[dscol] (0,0)--(1,1);
\draw[dscol] (0,1)--(1,0);
\bbull{1}{0}{0.1};
\bbull{0}{1}{0.1};
\ebull{1}{1}{0.1};
\ebull{0}{0}{0.1};
\end{tikzpicture} & 0 \\[1ex]
0 & 0 & 0 & \begin{tikzpicture}[scale=0.6,baseline=4pt]
\draw[dscol] (0,0)--(1,1);
\draw[dscol] (0,1)--(1,0);
\bbull{1}{0}{0.1};
\bbull{0}{1}{0.1};
\bbull{1}{1}{0.1};
\bbull{0}{0}{0.1};
\end{tikzpicture} \\
\end{pmatrix}_{ij}
=
\begin{pmatrix}
1 & 0 & 0 & 0 \\[1ex]
0 & 0 & \frac{x_j}{x_i} & 0 \\[1ex]
0 & 1 & 1-\frac{x_j}{x_i} & 0 \\[1ex]
0 & 0 & 0 & \frac{x_j}{x_i} \\
\end{pmatrix}_{ij}
\in 
\End(H_i \otimes H_j)
\]
that satisfies the $RLL$ form of the \defn{Yang--Baxter equation} in $\End(H_i \otimes H_j \otimes V_k)$:
\begin{align*}
L_{jk}(x_j,a_k) L_{ik}(x_i,a_k) R_{ij}(x_j/x_i) & = R_{ij}(x_j/x_i) L_{ik}(x_i,a_k) L_{jk}(x_j,a_k),
\\
L^*_{jk}(x_j,a_k) L^*_{ik}(x_i,a_k) R_{ij}(x_i/x_j) & = R_{ij}(x_i/x_j) L^*_{ik}(x_i,a_k) L^*_{jk}(x_j,a_k),
\end{align*}
which are represented pictorially, respectively, as
\[
\begin{tikzpicture}[scale=0.6,baseline=7pt]
\draw(1,-1)--(1,2);
\draw[rounded corners,arrow=0.15,dscol] (-1,1) node[left,black]{$\ss x_i$}--(0,0)--(2,0);
\draw[rounded corners,arrow=0.15,dscol] (-1,0)  node[left,black]{$\ss x_j$}--(0,1)--(2,1);
\end{tikzpicture}
=
\begin{tikzpicture}[scale=0.6,baseline=7pt]
\draw (1,2)--(1,-1);
\draw[arrow=0.15,rounded corners,dscol] (0,0) node[left,black]{$\ss x_j$}--(2,0)--(3,1);
\draw[arrow=0.15,rounded corners,dscol] (0,1)node[left,black]{$\ss x_i$}--(2,1)--(3,0);
\end{tikzpicture}
\qquad\qquad
\begin{tikzpicture}[scale=0.6,baseline=7pt]
\draw(1,-1)--(1,2);
\draw[rounded corners,arrow=0.15,ddscol] (-1,1) node[left,black]{$\ss x_i$}--(0,0)--(2,0);
\draw[rounded corners,arrow=0.15,ddscol] (-1,0)  node[left,black]{$\ss x_j$}--(0,1)--(2,1);
\end{tikzpicture}
=
\begin{tikzpicture}[scale=0.6,baseline=7pt]
\draw (1,2)--(1,-1);
\draw[arrow=0.15,rounded corners,ddscol] (0,0) node[left,black]{$\ss x_j$}--(2,0)--(3,1);
\draw[arrow=0.15,rounded corners,ddscol] (0,1)node[left,black]{$\ss x_i$}--(2,1)--(3,0);
\end{tikzpicture}
\]
\end{prop}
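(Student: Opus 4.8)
The plan is to verify the Yang--Baxter equation directly, treating it as an identity of linear operators on the finite-dimensional space $H_i \otimes H_j \otimes V_k \iso \CC^2 \otimes \CC^2 \otimes \CC^2$. Since the $L$-matrix, $L^*$-matrix, and $R$-matrix all preserve the square-ice grading (the total number of particles on the three strands), both sides of each equation decompose into blocks indexed by the number of particles, which can be $0,1,2,3$. The $0$-particle and $3$-particle blocks are one-dimensional and the identity is immediate from multiplying the scalar weights; so the content is in the two three-dimensional blocks ($1$ particle and $2$ particles), where one checks equality of $3\times3$ matrices. I would set up coordinates using the ordered bases fixed in the excerpt and record, for each block, the matrices of $L_{ik}L_{jk}$ (and $L^*_{ik}L^*_{jk}$) before and after conjugating/multiplying by the appropriate $R_{ij}$, then confirm the two products agree.

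Concretely, the key steps are as follows. First I would observe that $R_{ij}(x_j/x_i)$ acts only on the two quantum spaces $H_i \otimes H_j$ and in the single-particle sector is the $2\times2$ matrix $\left(\begin{smallmatrix} 0 & x_j/x_i \\ 1 & 1 - x_j/x_i\end{smallmatrix}\right)$ augmented with the vacuum and fully-occupied entries, exactly the standard $\fsl_2$ $R$-matrix up to normalization; so one may even cite that this $R$ satisfies the Yang--Baxter equation among three horizontal lines, but here I only need the mixed $RLL$ relation. Second, for the $L$-relation I would compute both sides on each of the three basis vectors of the single-particle $H_i\otimes H_j\otimes V_k$ sector and then the two-particle sector, using the explicit weights $1+a_k x_i$, $1$, $x_i$ from~\eqref{eq:dual_schur_vertices}; the $a_k$-dependence enters only through the vacuum-on-horizontal vertex $\textsf{a}_1$, so the terms carrying $a_k$ must match on their own, which they do because in every monomial the factor $1+a_k x_i$ is carried by whichever strand keeps a $0$ on the horizontal edge throughout, and the $R$-matrix only permutes the horizontal labels. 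Third, I would deduce the $L^*$-relation from the $L$-relation by the symmetry used to define $L^*$: flipping vertices upside down and swapping $0\leftrightarrow1$ on horizontal edges sends $L$ to $L^*$ and, on the $R$-matrix side, corresponds to the substitution $x_i/x_j \leftrightarrow x_j/x_i$ together with transposition, which is precisely why the second equation has argument $x_i/x_j$; so a short functoriality argument avoids redoing the computation.

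I expect the main obstacle to be purely bookkeeping: keeping the tensor-factor ordering and the ``read right-to-left drawn southwest-to-northeast'' convention consistent between the algebraic product $L_{jk}L_{ik}R_{ij}$ and the pictures, since a sign error or transposed factor there would make the two displayed equations look asymmetric in a misleading way. To control this I would fix once and for all that $L_{ij}$ acts as the given $4\times 4$ matrix on $H_i\otimes V_j$ and as the identity on the remaining factor of $H_i\otimes H_j\otimes V_k$, and similarly place $R_{ij}$ on $H_i\otimes H_j$; then the verification in each graded block reduces to a finite check that is best presented as a small table of Boltzmann weights for the two ``crossing'' configurations rather than as matrix algebra. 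Since the deformed vertices $\textsf{a}_1$ (and its $L^*$ analogue) are the only ones carrying the parameter $a_k$, and these always sit on the strand whose horizontal edge is unoccupied, the parameter factors out of the hard part and the remaining identity is exactly the Yang--Baxter relation for the five-vertex model underlying $s_\lambda(\xx_n)$, which is classical; invoking that reduces the new content to confirming the placement of the $1+a_k x_i$ factors, which is straightforward.
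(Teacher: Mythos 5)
Your core plan---treating both $RLL$ identities as finite checks on $H_i\otimes H_j\otimes V_k\iso\CC^2\otimes\CC^2\otimes\CC^2$, organized by the particle-number grading---is essentially the paper's own proof, which simply observes that this is a finite computation verified by comparing partition functions over all fixed boundary conditions. The grading remark and the observation that the $a_k$-dependence enters only through the $\textsf{a}_1$-type vertex are reasonable bookkeeping aids, though your claim that the $(1+a_kx)$-factors ``match on their own'' is itself part of the finite check (the $R$-matrix can move the particle between the two horizontal strands, so which strand carries the deformed vertex can differ on the two sides) rather than a separate argument; since you verify every block anyway, this costs nothing.

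The one place you genuinely depart from the paper is deducing the $L^*$ relation from the $L$ relation by ``flip upside down and swap $0\leftrightarrow 1$,'' and as stated this step is too quick. Carried out precisely, flipping upside down is the partial transpose in the $V_k$ factor and the $0\leftrightarrow 1$ swap is conjugation by $\sigma\otimes\sigma$ on $H_i\otimes H_j$ (this combination does send $L_{ik}(x_i,a_k)$ to $L^*_{ik}(x_i,a_k)$). Pushing it through the first relation, however, reverses the order of the two $L^*$ factors relative to the displayed dual identity and replaces $R_{ij}(x_j/x_i)$ by the conjugate $(\sigma\otimes\sigma)R_{ij}(x_j/x_i)(\sigma\otimes\sigma)$, which agrees with an $R$-matrix of inverted argument only on the mixed one-particle sector of $H_i\otimes H_j$ and differs from it on the frozen sectors spanned by $\epsilon_0\otimes\epsilon_0$ and $\epsilon_1\otimes\epsilon_1$; the corner entries do not simply commute past the $L^*$'s, so ``transposition plus $x_i/x_j\leftrightarrow x_j/x_i$'' does not by itself land on the second displayed equation. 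This is exactly the tensor-ordering/convention bookkeeping you flag as the main obstacle, and it bites here: to make the shortcut honest you must relabel $i\leftrightarrow j$ and separately verify the frozen-sector entries, or else just run the same direct finite verification for the $L^*$ model---which is what the paper's one-line proof covers for both relations. The gap is repairable (the direct check is equally short), but as written the functoriality argument does not yet prove the second identity.
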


\begin{proof}
This is a finite computation.
Pictorially, this can be checked by looking at any fixed boundary conditions and seeing the partition functions are equal on both sides.
\end{proof}

We define a one row \defn{transfer matrix} $\TMat(x_i) \colon \mcV \to \mcV$ as a linear operator acting on $H_i \otimes \mcV$ that is expressed as a product of $L$-matrices:
\[
\TMat(x_i) = \prod_{j \in \ZZ}L_{ij}(x_i,a_j)
\qquad
\left(
\begin{tikzpicture}[scale=0.6,baseline=-1pt]
\draw[arrow=0.05,dscol] (-1,0)node[left,black]{} -- (7,0) node[right,black] {};
\foreach\x in {0,1,...,6}{
\draw[arrow=0.25] (\x,-1) -- (\x,1);
}
\node[below] at (0,-0.8) {$\cdots$};\node[above] at (0,0.8) {$\cdots$};
\node[below] at (1,-0.8) {\tiny $k_{-2}$};\node[above] at (1,0.8) {\tiny $k'_{-2}$};
\node[below] at (2,-0.8) {\tiny $k_{-1}$};\node[above] at (2,0.8) {\tiny $k'_{-1}$};
\node[below] at (3,-0.8) {\tiny $k_0$};\node[above] at (3,0.8) {\tiny $k'_0$};
\node[below] at (4,-0.8) {\tiny $k_1$};\node[above] at (4,0.8) {\tiny $k'_1$};
\node[below] at (5,-0.8) {\tiny $k_2$};\node[above] at (5,0.8) {\tiny $k'_2$};
\node[below] at (6,-0.8) {$\cdots$}; \node[above] at (6,0.8) {$\cdots$};
\end{tikzpicture}
\right).
\]
Using $L^*$-matrices, we define the dual transfer matrix $\TMat^{*}(x_i) \colon \mcV \to \mcV$ as
\[
\TMat^{*}(x_i) = \prod_{j \in \ZZ} L^*_{ij}(x_i, a_j)
\qquad
\left(
\begin{tikzpicture}[scale=0.6,baseline=-1pt]
\draw[arrow=0.05,ddscol] (-1,0)node[left,black]{} -- (7,0) node[right,black] {};
\foreach\x in {0,1,...,6}{
\draw[arrow=0.25] (\x,-1) -- (\x,1);
}
\node[below] at (0,-0.8) {$\cdots$};\node[above] at (0,0.8) {$\cdots$};
\node[below] at (1,-0.8) {\tiny $k_{-2}$};\node[above] at (1,0.8) {\tiny $k'_{-2}$};
\node[below] at (2,-0.8) {\tiny $k_{-1}$};\node[above] at (2,0.8) {\tiny $k'_{-1}$};
\node[below] at (3,-0.8) {\tiny $k_0$};\node[above] at (3,0.8) {\tiny $k'_0$};
\node[below] at (4,-0.8) {\tiny $k_1$};\node[above] at (4,0.8) {\tiny $k'_1$};
\node[below] at (5,-0.8) {\tiny $k_2$};\node[above] at (5,0.8) {\tiny $k'_2$};
\node[below] at (6,-0.8) {$\cdots$}; \node[above] at (6,0.8) {$\cdots$};
\end{tikzpicture}
\right).
\]

For any fixed partitions $\lambda$ and $\mu$, the transfer matrices have precisely one state with the top and bottom boundaries being $\lambda$ and $\mu$ respectively.
For the transfer matrix $\TMat$, if we write $\TMat(y_i) \ket{\mu} = c_{\mu}^{\lambda} \ket{\lambda}$ for $\ket{\mu},\ket{\lambda} \in \mcV$, then $c_{\mu}^{\lambda} \in \ZZ[y_i|\aaa]$ is a finite sum of states (although the weight of such a state will be an infinite product).
Indeed, note that since all vertical labels are $1$ for any index at most $-\ell(\lambda)$, there is only one possible configuration that forces the corresponding horizontal labels to be $0$.
Hence, for any index strictly less than $-\ell(\lambda)$, the Boltzmann weight is $1$, and so we can freely disregard these states and consider only a semi-infinite tensor product.
Since the number of particles must be conserved and that the vertical labels are $0$ for any index at least $\lambda_1$, there is also only one possible configuration for vertices sufficiently far to the right.
In the sequel, we will also set $a_i = 0$ for $i \gg 1$, and in this case we have a finite product and can restrict to a finite grid.
Furthermore, we can realize the transfer matrix as the $k$-th particle moving from the position $\mu_k - k$ to $\lambda_k - k$.
The case is similar for $\TMat^*$.

We introduce some other notation for convenience.
Let $\bra{\lambda} \in \mcV^*$ denote the dual basis vector to $\ket{\lambda}$ such that $\braket{\lambda}{\mu} = \bra{\lambda}(\ket{\mu}) = \delta_{\lambda\mu}$.
Thus, we have
\begin{align*}
Z(\fE_{\lambda/\mu}) & = \bra{\lambda} \TMat(x_n) \dotsm \TMat(x_2) \TMat(x_1) \ket{\mu},
\\
Z(\fE^*_{\lambda/\mu}) & = \bra{\mu} \TMat^*(x_1) \TMat^*(x_2) \dotsm \TMat^*(x_n) \ket{\lambda}.
\end{align*}
We note that $Z(\fE_{\lambda/\mu}) = Z(\fE^*_{\lambda/\mu})$ by the construction of the dual model.

\begin{ex}
\label{ex:transfer_state}
Let $\lambda = (4,4,1)$ and $\mu = (4,2)$.
We construct the unique state in $\bra{\lambda} \TMat(x) \ket{\mu}$:
\[
\begin{tikzpicture}[scale=0.8]
\draw[fill=gray] (0,0) rectangle (4,-1);
\draw[fill=gray] (0,-1) rectangle (2,-2);
\draw (0,0) grid (1,-3);
\draw (2,0) grid (4,-2);
\draw (0,0) -- (5,0);
\draw(4,0)--(4,-0.2);
\draw(5,0)--(5,-0.2);
\draw(0,-3)--(0,-4);
\draw(0,-4)--(0.2,-4);
\draw[dashed,arrow,UQgold] (-0.5,-3)node[above]{$\ss z_{-4}$}--(1,-4.5) ;
\draw[dashed,arrow,darkred] (-0.5,-2)--(2,-4.5) node[below]{$\ss z_{-3}$} ;
\draw[dashed,arrow,OCUsapphire] (-0.5,-1)node[above]{$\ss z_{-2}$}--(3,-4.5);
\draw[dashed,arrow,dgreencolor] (-0.5,0)--(4,-4.5)node[below]{$\ss z_{-1}$};
\draw[dashed,arrow,blue] (-0.5,1)node[above]{$\ss z_{0}$}--(5,-4.5);
\draw[dashed,arrow,UQpurple] (0.5,1)--(5,-3.5)node[below]{$\ss z_{1}$};
\draw[dashed,arrow,brown] (1.5,1)node[above]{$\ss z_{2}$}--(5,-2.5);
\draw[dashed,arrow,orange] (2.5,1)--(5,-1.5)node[below]{$\ss z_{3}$};
\draw[dashed,arrow,black] (3.5,1)node[above]{$\ss z_{4}$}--(5,-0.5);
\end{tikzpicture}
\quad
\begin{tikzpicture}[scale=0.6,baseline=-5pt]
\draw[dscol] (1,3) -- (19,3);
\ebull{1}{3}{0.1};
\ebull{19}{3}{0.1};
\draw (2,2) -- (2,4)node[above,UQgold]{$\ss z_{-4}$};
\draw (4,2) -- (4,4)node[above,darkred]{$\ss z_{-3}$};
\draw (6,2) -- (6,4)node[above,OCUsapphire]{$\ss z_{-2}$};
\draw (8,2) -- (8,4)node[above,dgreencolor]{$\ss z_{-1}$};
\draw (10,2) -- (10,4)node[above,blue]{$\ss z_{0}$};
\draw (12,2) -- (12,4)node[above,UQpurple]{$\ss z_{1}$};
\draw (14,2) -- (14,4)node[above,brown]{$\ss z_{2}$};
\draw (16,2) -- (16,4)node[above,orange]{$\ss z_{3}$};
\draw (18,2) -- (18,4)node[above,black]{$\ss z_{4}$};
\bbull{2}{2}{0.1};
\bbull{4}{2}{0.1};
\ebull{6}{2}{0.1};
\ebull{8}{2}{0.1};
\bbull{10}{2}{0.1};
\ebull{12}{2}{0.1};
\ebull{14}{2}{0.1};
\bbull{16}{2}{0.1};
\ebull{18}{2}{0.1};
\bbull{2}{4}{0.1};
\ebull{4}{4}{0.1};
\bbull{6}{4}{0.1};
\ebull{8}{4}{0.1};
\ebull{10}{4}{0.1};
\ebull{12}{4}{0.1};
\bbull{14}{4}{0.1};
\bbull{16}{4}{0.1};
\ebull{18}{4}{0.1};
\ebull{3}{3}{0.1};
\bbull{5}{3}{0.1};
\ebull{7}{3}{0.1};
\ebull{9}{3}{0.1};
\bbull{11}{3}{0.1};
\bbull{13}{3}{0.1};
\ebull{15}{3}{0.1};
\ebull{17}{3}{0.1};
\node at (10,5) {$\xrightarrow[\hspace{40pt}]{\displaystyle \lambda}$};
\node at (10,1.2) {$\xrightarrow[\displaystyle \mu]{\hspace{40pt}}$};
\end{tikzpicture}
\]
Here we have restricted the picture so that the physical spaces to have indices in $[-4, 4]$, which is equivalent to considering $\ell(\lambda) \leq 4$ and $\lambda_1 \leq 4$ and $a_j = 0$ for all $j > 4$.
This finite lattice state has a Boltzmann weight of $(1+a_{-1} x_i) (1 + a_3 x_i) (1 + a_4 x_i) x_i^2$, and hence
\[
\bra{\lambda} \TMat(x) \ket{\mu} = (1+a_{-1} x_i) (1 + a_4 x_i) x_i^3 \prod_{j=5}^{\infty} (1 + a_j x_i).
\]
\end{ex}

As a consequence, for fixed partitions $\lambda, \mu$, if we restrict to a sufficiently large finite grid, we must have the physical space boundary be $0$ (resp.~$1$) for $\TMat$ (resp.~$\TMat^*$), or pictorially
\begin{equation}
\label{eq:finite_transfer_boundary}
\begin{tikzpicture}[scale=0.6,baseline=-1pt]
\draw[arrow=0.05,dscol] (-1,0)node[left,black]{$\ss 0$} -- (7,0) node[right,black] {$\ss 0$};
\foreach\x in {0,1,...,6}{
\draw[arrow=0.25] (\x,-1) -- (\x,1);
}
\node[below] at (0,-0.8) {\tiny $k_m$};\node[above] at (0,0.8) {\tiny $k'_m$};
\node[below] at (1.5,-0.8) {$\cdots$};\node[above] at (1.5,0.8) {$\cdots$};
\node[below] at (3,-0.8) {\tiny $k_0$};\node[above] at (3,0.8) {\tiny $k'_0$};
\node[below] at (6,-0.8) {\tiny $k_M$};\node[above] at (6,0.8) {\tiny $k'_M$};
\node[below] at (4.5,-0.8) {$\cdots$};\node[above] at (4.5,0.8) {$\cdots$};
\end{tikzpicture}
\qquad\qquad
\begin{tikzpicture}[scale=0.6,baseline=-1pt]
\draw[arrow=0.05,ddscol] (-1,0)node[left,black]{$\ss 1$} -- (7,0) node[right,black] {$\ss 1$};
\foreach\x in {0,1,...,6}{
\draw[arrow=0.25] (\x,-1) -- (\x,1);
}
\node[below] at (0,-0.8) {\tiny $k_m$};\node[above] at (0,0.8) {\tiny $k'_m$};
\node[below] at (1.5,-0.8) {$\cdots$};\node[above] at (1.5,0.8) {$\cdots$};
\node[below] at (3,-0.8) {\tiny $k_0$};\node[above] at (3,0.8) {\tiny $k'_0$};
\node[below] at (6,-0.8) {\tiny $k_M$};\node[above] at (6,0.8) {\tiny $k'_M$};
\node[below] at (4.5,-0.8) {$\cdots$};\node[above] at (4.5,0.8) {$\cdots$};
\end{tikzpicture}
\end{equation}
for $m \leq \ell(\lambda)$ and $M \geq \lambda_1$.

Next, we will specify the boundary conditions such that the resulting partition functions equal $\eschur^{\lambda/\mu}(\xx_n|\aaa)$.
Let $\fE_{\lambda/\mu}$ denote the vertex model using the $L$-matrix $L_{ij}$ on $[n] \times \ZZ$ with the top and bottom boundaries given by $\lambda$ and $\mu$ respectively.
Let $\fE_{\lambda/\mu}^*$ denote the corresponding dual model, with the top and bottom boundaries $\mu$ and $\lambda$, respectively, and using the $L^*$-matrix $L_{ij}^*$.

\begin{remark}
Unlike lattice models of $s_{\lambda}$ (such as $\aaa = 0$ for~\eqref{eq:dual_schur_vertices} or in Section~\eqref{sec:factorial_model} below), the partition function of the above model depends on size of the grid.
For a fixed $\lambda$, if one enlarges the grid from $n\times (n+\lambda_1)$ to $n\times (n+k)$ for some positive integer $k \geq \lambda_1$, then the partition function is scaled by $\prod_{j=\lambda_1}^k \prod_{i=1}^n (1 + a_j x_i)$.
We can see from the configuration in Example~\ref{ex:transfer_state} that only the deformed vertex can appear as a vertex in rightmost $n \times k$ grid.
\end{remark}

\begin{prop}
\label{prop:combinatorial_partition_function}
We have
\begin{align*}
\eschur^{\lambda/\mu}(\xx_n|\aaa) & = \bra{\lambda} \TMat(x_n) \dotsm \TMat(x_2) \TMat(x_1) \ket{\mu} = Z(\fE_{\lambda/\mu})
\\ & = \bra{\mu} \TMat^*(x_1) \TMat^*(x_2) \dotsm \TMat^*(x_n) \ket{\lambda} = Z(\fE^*_{\lambda/\mu}).
\end{align*}
\end{prop}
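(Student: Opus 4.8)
The plan is to prove the first equality $\eschur^{\lambda/\mu}(\xx_n|\aaa)=Z(\fE_{\lambda/\mu})$ by a weight-preserving bijection between $\elt_n(\lambda/\mu)$ and the states of $\fE_{\lambda/\mu}$, and to deduce the others. The identity $Z(\fE_{\lambda/\mu})=\bra{\lambda}\TMat(x_n)\dotsm\TMat(x_1)\ket{\mu}$ is the usual transfer--matrix identity (a product of $L$-matrices sums over the internal vertical configurations) once one records via~\eqref{eq:finite_transfer_boundary} that on a large enough grid the horizontal boundary edges are all $0$. The remaining equality $Z(\fE_{\lambda/\mu})=Z(\fE^{*}_{\lambda/\mu})=\bra{\mu}\TMat^{*}(x_1)\dotsm\TMat^{*}(x_n)\ket{\lambda}$ holds because reflecting a state of $\fE_{\lambda/\mu}$ through a horizontal axis and complementing the labels on its horizontal edges is a weight-preserving bijection onto the states of $\fE^{*}_{\lambda/\mu}$ --- this is exactly the defining relation between $L^{*}$ and $L$, and is the observation already recorded above.

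For the bijection, let $T\in\elt_n(\lambda/\mu)$ and, for $0\le i\le n$, let $\mu^{(i)}$ be the partition obtained from $\mu$ by adjoining all cells of $T$ with entry $\le i$. Entries strictly increase down columns, so each $\mu^{(i)}/\mu^{(i-1)}$ is a horizontal strip, and $\mu=\mu^{(0)}\subseteq\mu^{(1)}\subseteq\dotsb\subseteq\mu^{(n)}=\lambda$. Assign to $T$ the state whose vertical edges just below (resp.\ just above) the $i$-th horizontal line carry the Maya diagram of $\mu^{(i-1)}$ (resp.\ of $\mu^{(i)}$). Reading line $i$ left to right, the passage from $\mu^{(i-1)}$ to $\mu^{(i)}$ is realized by particles hopping rightwards one at a time --- a $\tc_1$ vertex lifts a particle onto the horizontal line, it crosses the intervening empty columns as $\tb_2$ vertices, and a $\tc_2$ vertex drops it back down --- which contributes one factor $x_i$ for each cell of $\mu^{(i)}/\mu^{(i-1)}$, i.e.\ for each cell of $T$ with entry $i$, the factor for the cell of content $c$ sitting in column $c$. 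Every other column of line $i$ carries a $\tb_1$ vertex (over a stationary particle) or an $\ta_1$ vertex (over empty space), both forced; expanding the Boltzmann weight $1+a_jx_i$ of the $\ta_1$ vertex in column $j$, we keep the summand $a_jx_i$ exactly at the columns where $T$ has an edge label $i$. Since $j$ is then the content of the cell below that edge, this reproduces the factor $x_{\ell}\,a_{c(\bbb)}$ of $\wt(T)$; as every remaining vertex has weight $1$, the Boltzmann weight of the state equals $\wt(T)$, and summing over all $T$ gives $Z(\fE_{\lambda/\mu})=\eschur^{\lambda/\mu}(\xx_n|\aaa)$. (Alternatively one may induct on $n$, peeling off $\TMat(x_n)$ to split off the value-$n$ cells and edge labels, the inductive step being a branching rule for $\eschur$.)

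The step I expect to be the main obstacle is verifying that this assignment really is a bijection --- concretely, that the columns of line $i$ admitting a \emph{deformed} $\ta_1$ vertex (those $0$ in the Maya diagrams of both $\mu^{(i-1)}$ and $\mu^{(i)}$ and not straddled by a hopping particle) are exactly the contents at which $T$ may legally carry an edge label equal to $i$ (the top edges of cells outside $\mu^{(i)}$ whose cell above, if any, lies in $\mu^{(i-1)}$, together with the infinite family coming from the ``zeroth row'' of $\mu$). This reduces to a careful but routine comparison of the particle--conservation constraints with the semistandardness and edge--label inequalities, and it is the only place where the precise entries of the $L$-matrix and the edge--label conventions are used; granting it, injectivity and surjectivity are immediate, since a state is recovered from its intermediate Maya diagrams together with the positions of its deformed vertices, and these in turn recover the box entries and the edge labels of $T$. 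Finally, the infinitely many edge labels permitted on the large--content edges of the zeroth row correspond to the infinitely many deformed $\ta_1$ vertices occurring to the right of the last particle on each line, so the asserted identity is an equality in the completion $\ZZ[\xx_n|\aaa]=\ZZ[\xx_n][\![\aaa]\!]$; after setting $a_j=0$ for $j\gg1$ it becomes an equality of polynomials on a finite grid, consistent with the scaling in the Remark preceding the statement.
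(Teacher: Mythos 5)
Your proposal is correct and takes essentially the same route as the paper: the paper reduces to a single row via the branching rule~\eqref{eq:branching_rule} together with the resolution of identity $\sum_{\nu}\ket{\nu}\bra{\nu}$, and then matches one transfer-matrix row with a horizontal strip exactly as in your row-by-row analysis (a factor $x_i$ per added box, and the $a_j x_i$ summand of the deformed vertex $1+a_jx_i$ per edge label on diagonal $j$), while the dual case is likewise handled by the observation $Z(\fE_{\lambda/\mu})=Z(\fE^*_{\lambda/\mu})$ built into the construction of $L^*$. The verification you flag as the main obstacle (deformed-vertex columns versus admissible positions for an edge label $i$) is asserted in the paper with no more detail than in your sketch, so nothing essential is missing.
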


\begin{proof}
The proof that $\eschur^{\lambda/\mu}(\xx_n|\aaa) = Z(\fE_{\lambda/\mu})$ is similar to $\eschur^{\lambda/\mu}(\xx_n|\aaa) = Z(\fE^*_{\lambda/\mu})$, and so we will only do the first case.

We begin by noting the following \defn{branching rule}
\begin{equation}
\label{eq:branching_rule}
\eschur^{\lambda/\mu}(\xx_n, x_{n+1}|\aaa) = \sum_{\nu} \eschur^{\lambda/\nu}(x_{n+1}|\aaa) \eschur^{\nu/\mu}(\xx_n|\aaa)
\end{equation}
that follows immediately from the combinatorial definition by restricting to the labels into $[n]$ and $\{n+1\}$ in each edge labeled tableau.
Similarly, since $\sum_{\nu} \ket{\nu} \bra{\nu}$ is the identity matrix, we have
\[
\bra{\lambda} \TMat(x_{n+1}) \TMat(x_n) \dotsm \TMat(x_2) \TMat(x_1) \ket{\mu} = \sum_{\nu} \bra{\lambda} \TMat(x_{n+1}) \ket{\nu} \bra{\nu} \TMat(x_n) \dotsm \TMat(x_2) \TMat(x_1) \ket{\mu},
\]
which can also be seen pictorially by breaking the lattice model at the $n$-th row into two parts.
Hence, it is sufficient to show $\bra{\lambda} \TMat(x) \ket{\mu} = \eschur^{\lambda/\mu}(x|\aaa)$ for a single variable $x$.

We note that if $\lambda/\mu$ is not a horizontal strip,\footnote{A horizontal strip does not contain any vertical dominos, or equivalently it is a skew shape $\lambda / \mu$ that interlace $\lambda_1 \geq \mu_1 \geq \lambda_2 \geq \mu_2 \geq \cdots$.} then both sides are $0$.
Indeed, $\bra{\lambda} \TMat(x) \ket{\mu} = 0$ in this case as we can imagine the action of $\TMat(x)$ moves each particle to the right some number of steps without intersection.
Thus, assume $\lambda/\mu$ is a horizontal strip, and we have the equality as each step a particle moves to the right corresponds to a factor of $x$ and a box in the skew shape.
Each edge label above a box in the $j$-th diagonal corresponds to choosing the $a_j x_i$ factor in the Boltzmann weight $1 + a_j x_i$.
\end{proof}

\begin{ex}
We will translate the unique state of $\bra{\lambda} \TMat(x_i) \ket{\mu}$ from Example~\ref{ex:transfer_state} to edge labeled tableaux with $a_j = 0$ for all $j > 4$ or equivalently restricting to a finite grid.
Note that we can add an $i$ to the edges along the diagonals with index $-1$ and $4$, which are also the upper edges of boxes with those contents.
Indeed, we have the following edge labeled tableaux:
\[
\begin{array}{c@{\qquad}c@{\qquad}c@{\qquad}c}
\begin{tikzpicture}[scale=0.5]
\draw[fill=black!30] (0,0) rectangle (4,-1);
\draw[fill=black!30] (0,-1) rectangle (2,-2);
\draw (0,0) grid (1,-3);
\draw (2,0) grid (4,-2);
\draw (0,0) -- (5,0);
\draw(4,0)--(4,-0.2);
\draw(5,0)--(5,-0.2);
\draw(0,-3)--(0,-4);
\draw(0,-4)--(0.2,-4);
\node at (0.5,-2.5) {$\ss i$};
\node at (2.5,-1.5) {$\ss i$};
\node at (3.5,-1.5) {$\ss i$};
\end{tikzpicture}
&
\begin{tikzpicture}[scale=0.5]
\draw[fill=black!30] (0,0) rectangle (4,-1);
\draw[fill=black!30] (0,-1) rectangle (2,-2);
\draw (0,0) grid (1,-3);
\draw (2,0) grid (4,-2);
\draw (0,0) -- (5,0);
\draw(4,0)--(4,-0.2);
\draw(5,0)--(5,-0.2);
\draw(0,-3)--(0,-4);
\draw(0,-4)--(0.2,-4);
\node at (0.5,-2.5) {$\ss i$};
\node at (2.5,-1.5) {$\ss i$};
\node at (3.5,-1.5) {$\ss i$};
\node at (1.5,-2) {$\ss i$};
\end{tikzpicture}
&
\begin{tikzpicture}[scale=0.5]
\draw[fill=black!30] (0,0) rectangle (4,-1);
\draw[fill=black!30] (0,-1) rectangle (2,-2);
\draw (0,0) grid (1,-3);
\draw (2,0) grid (4,-2);
\draw (0,0) -- (5,0);
\draw(4,0)--(4,-0.2);
\draw(5,0)--(5,-0.2);
\draw(0,-3)--(0,-4);
\draw(0,-4)--(0.2,-4);
\node at (0.5,-2.5) {$\ss i$};
\node at (2.5,-1.5) {$\ss i$};
\node at (3.5,-1.5) {$\ss i$};
\node at (4.5,0) {$\ss i$};
\end{tikzpicture}
&
\begin{tikzpicture}[scale=0.5]
\draw[fill=black!30] (0,0) rectangle (4,-1);
\draw[fill=black!30] (0,-1) rectangle (2,-2);
\draw (0,0) grid (1,-3);
\draw (2,0) grid (4,-2);
\draw (0,0) -- (5,0);
\draw(4,0)--(4,-0.2);
\draw(5,0)--(5,-0.2);
\draw(0,-3)--(0,-4);
\draw(0,-4)--(0.2,-4);
\node at (0.5,-2.5) {$\ss i$};
\node at (2.5,-1.5) {$\ss i$};
\node at (3.5,-1.5) {$\ss i$};
\node at (1.5,-2) {$\ss i$};
\node at (4.5,0) {$\ss i$};
\end{tikzpicture}
\end{array}
\]
Note that we have the same partition function with the lattice $[1] \times [-k, 4]$ for any $k \geq 3$.
\end{ex}

\begin{ex}
\label{example:E(2,0)}
Let $\lambda = (2,0)$ and $a_j = 0$ for all $j > 2$.
We have the following lattice configurations for $\eschur^{\lambda}(\xx_2|\aaa)$:
\[
\begin{array}{r@{\;=\;}c@{\;\;+\;\;}c@{\;\;+\;\;}c}
\eschur^{\lambda}(\xx_2|\aaa) &
\begin{tikzpicture}[scale=0.6,baseline={([yshift=-1.5ex]current bounding box.center)}]
\node at (-1,3) {$\ss x_1$};
\node at (-1,4) {$\ss x_2$};
\foreach\x in {0,1}{
\draw[dscol] (0,3+\x) -- (5,3+\x);
\ebull{0}{3+\x}{0.1};
\ebull{5}{3+\x}{0.1};
}
\foreach\y in {1,2,3,4}{
\draw (\y,2) -- (\y,5);}
\node at (1,1.5) { $\ss z_{-2}$};
\node at (2,1.5) {$\ss z_{-1}$};
\node at (3,1.5) {$\ss z_{0}$};
\node at (4,1.5) {$\ss z_{1}$};
\bbull{1}{2}{0.1};
\bbull{2}{2}{0.1};
\ebull{3}{2}{0.1};
\ebull{4}{2}{0.1};
\bbull{1}{5}{0.1};
\ebull{2}{5}{0.1};
\ebull{3}{5}{0.1};
\bbull{4}{5}{0.1};
\ebull{1.5}{3}{0.1};
\bbull{2.5}{3}{0.1};
\bbull{3.5}{3}{0.1};
\ebull{1.5}{4}{0.1};
\ebull{2.5}{4}{0.1};
\ebull{3.5}{4}{0.1};
\bbull{1}{3.5}{0.1};
\ebull{2}{3.5}{0.1};
\ebull{3}{3.5}{0.1};
\bbull{4}{3.5}{0.1};
\node at (2.5,6) {$\xrightarrow[\hspace{40pt}]{\displaystyle \lambda}$};
\end{tikzpicture}
&
\begin{tikzpicture}[scale=0.6,baseline={([yshift=-1.5ex]current bounding box.center)}]
\node at (-1,3) {$\ss x_1$};
\node at (-1,4) {$\ss x_2$};
\foreach\x in {0,1}{
\draw[dscol] (0,3+\x) -- (5,3+\x);
\ebull{0}{3+\x}{0.1};
\ebull{5}{3+\x}{0.1};
}
\foreach\y in {1,2,3,4}{
\draw (\y,2) -- (\y,5);}
\node at (1,1.5) { $\ss z_{-2}$};
\node at (2,1.5) {$\ss z_{-1}$};
\node at (3,1.5) {$\ss z_{0}$};
\node at (4,1.5) {$\ss z_{1}$};
\bbull{1}{2}{0.1};
\bbull{2}{2}{0.1};
\ebull{3}{2}{0.1};
\ebull{4}{2}{0.1};
\bbull{1}{5}{0.1};
\ebull{2}{5}{0.1};
\ebull{3}{5}{0.1};
\bbull{4}{5}{0.1};
\ebull{1.5}{3}{0.1};
\bbull{2.5}{3}{0.1};
\ebull{3.5}{3}{0.1};
\ebull{1.5}{4}{0.1};
\ebull{2.5}{4}{0.1};
\bbull{3.5}{4}{0.1};
\bbull{1}{3.5}{0.1};
\ebull{2}{3.5}{0.1};
\bbull{3}{3.5}{0.1};
\ebull{4}{3.5}{0.1};
\node at (2.5,6) {$\xrightarrow[\hspace{40pt}]{\displaystyle \lambda}$};
\end{tikzpicture}
&
\begin{tikzpicture}[scale=0.6,baseline={([yshift=-1.5ex]current bounding box.center)}]
\node at (-1,3) {$\ss x_1$};
\node at (-1,4) {$\ss x_2$};
\foreach\x in {0,1}{
\draw[dscol] (0,3+\x) -- (5,3+\x);
\ebull{0}{3+\x}{0.1};
\ebull{5}{3+\x}{0.1};
}
\foreach\y in {1,2,3,4}{
\draw (\y,2) -- (\y,5);}
\node at (1,1.5) { $\ss z_{-2}$};
\node at (2,1.5) {$\ss z_{-1}$};
\node at (3,1.5) {$\ss z_{0}$};
\node at (4,1.5) {$\ss z_{1}$};
\bbull{1}{2}{0.1};
\bbull{2}{2}{0.1};
\ebull{3}{2}{0.1};
\ebull{4}{2}{0.1};
\bbull{1}{5}{0.1};
\ebull{2}{5}{0.1};
\ebull{3}{5}{0.1};
\bbull{4}{5}{0.1};
\ebull{1.5}{3}{0.1};
\ebull{2.5}{3}{0.1};
\ebull{3.5}{3}{0.1};
\ebull{1.5}{4}{0.1};
\bbull{2.5}{4}{0.1};
\bbull{3.5}{4}{0.1};
\bbull{1}{3.5}{0.1};
\bbull{2}{3.5}{0.1};
\ebull{3}{3.5}{0.1};
\ebull{4}{3.5}{0.1};
\node at (2.5,6) {$\xrightarrow[\hspace{40pt}]{\displaystyle \lambda}$};
\end{tikzpicture}
\\
& x^{2}_1 (1+a_{\ss -1} x_2)(1+a_{0} x_2)
& x_1 (1+a_{1} x_1)x_2 (1+a_{-1} x_2)
& x^{2}_2 (1+a_{0} x_1)(1+a_{1} x_1).
\end{array}
\]
The corresponding tableaux are
\begin{align*}
\ytableausetup{boxsize=1.5em}
\ytableaushort{11}
+
\ytableaushort{{\raisebox{-4pt}{$\substack{\displaystyle 1 \\[1pt] \displaystyle 2}$}}1}
+
\ytableaushort{1{\raisebox{-4pt}{$\substack{\displaystyle 1 \\[1pt] \displaystyle 2}$}}}
+
\ytableaushort{{\raisebox{-4pt}{$\substack{\displaystyle 1 \\[1pt] \displaystyle 2}$}}{\raisebox{-4pt}{$\substack{\displaystyle 1 \\[1pt] \displaystyle 2}$}}}
  & = x_1^{2}+a_{-1} x^{2}_1 x_2 +a_0 x^{2}_1 x_2+a_{-1} a_{0} x^{2}_1 x^{2}_2
  \\ & = x^{2}_1 (1+a_{-1} x_2)(1+a_{0} x_2),
\\[1em]
\ytableaushort{12}
+
\ytableaushort{{\raisebox{-4pt}{$\substack{\displaystyle 1 \\[1pt] \displaystyle 2}$}}2}
+
\ytableaushort{1{\raisebox{4pt}{$\substack{\displaystyle 1 \\[1pt] \displaystyle 2}$}}}
+
\ytableaushort{{\raisebox{-4pt}{$\substack{\displaystyle 1 \\[1pt] \displaystyle 2}$}}{\raisebox{4pt}{$\substack{\displaystyle 1 \\[1pt] \displaystyle 2}$}}}
  &=x_1 x_2+a_{-1} x_1 x^{2}_2+a_{1} x^{2}_1 x^{}_2+a_{-1} a_{1} x^{2}_1 x^{2}_2
  \\ &=x_1 (1+a_{1} x_1)x_2 (1+a_{-1} x_2),
\\[1em]
\ytableaushort{22}
+
\ytableaushort{{\raisebox{4pt}{$\substack{\displaystyle 1 \\[1pt] \displaystyle 2}$}}2}
+
\ytableaushort{2{\raisebox{4pt}{$\substack{\displaystyle 1 \\[1pt] \displaystyle 2}$}}} 
+
\ytableaushort{{\raisebox{4pt}{$\substack{\displaystyle 1 \\[1pt] \displaystyle 2}$}}{\raisebox{4pt}{$\substack{\displaystyle 1 \\[1pt] \displaystyle 2}$}}}
  &=x^{2}_2+a_0 x_1  x^{2}_2+a_1 x^{}_1 x^{2}_2+a_0 a_1 x^{2}_1 x^{2}_2
  \\ &=x^{2}_2 (1+a_0 x_1)(1+a_1 x_1),
\end{align*}
 \end{ex}

\begin{thm}
\label{thm:elambda-symmetric}
The function $\eschur^{\lambda/\mu}(\xx|\aaa)$ are invariant under permutation of $\xx$ variables.
Thus, $\eschur^{\lambda/\mu}(\xx|\aaa)$ is a symmetric function, and moreover $\{\eschur^{\lambda}(\xx|\aaa)\}_{\lambda}$ is a basis for symmetric functions.
\end{thm}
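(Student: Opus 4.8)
The plan is to use the \emph{train argument}: show that the row transfer matrices commute as operators on $\mcV$, so that the partition function is symmetric in the spectral parameters, and then read off the basis statement from the leading term.

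\textbf{Reduction.} By Proposition~\ref{prop:combinatorial_partition_function}, $\eschur^{\lambda/\mu}(\xx_n|\aaa) = \bra{\lambda}\TMat(x_n)\dotsm\TMat(x_1)\ket{\mu}$. Since $S_n$ is generated by adjacent transpositions, it suffices to prove $\TMat(x)\TMat(y) = \TMat(y)\TMat(x)$ as operators $\mcV \to \mcV$ for arbitrary spectral parameters $x,y$: swapping two consecutive factors then leaves the matrix element unchanged, and such swaps generate all of $S_n$.

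\textbf{Train argument.} Fix $\lambda,\mu$ and restrict to a two-row finite grid with spectral parameters $x,y$ and columns indexed by $[m,M]$ for $m \leq -\ell(\lambda)$, $M \geq \lambda_1$; by~\eqref{eq:finite_transfer_boundary} the horizontal boundary edges at the far left and far right of each row are then forced to be $0$, and only finitely many of the $a_j$ are relevant, so this loses no information. The partition function of this lattice with vertical boundaries $\lambda$ on top and $\mu$ on the bottom is $\bra{\lambda}\TMat(y)\TMat(x)\ket{\mu}$. Attach the $R$-matrix $R(y/x)$ across the two horizontal lines at the left edge: both lines enter with the label $0$ and $R$ fixes $\epsilon_0 \otimes \epsilon_0$ with Boltzmann weight $1$ (its top-left entry), so the partition function is unchanged. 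Now apply the Yang--Baxter equation of the preceding proposition column by column, sliding the $R$-matrix rightward across the grid; each step interchanges the pair of $L$-matrices in one column, so once the $R$-matrix reaches the right edge the two rows have been swapped. There the two lines again carry label $0$, so the $R$-matrix is removed with weight $1$, leaving the partition function for $\bra{\lambda}\TMat(x)\TMat(y)\ket{\mu}$. Hence $\TMat(x)\TMat(y)=\TMat(y)\TMat(x)$; enlarging the grid only rescales the partition function by $\prod_{i,j}(1+a_j x_i)$, which is symmetric in $\xx$, so this is harmless. Together with the reduction, $\eschur^{\lambda/\mu}(\xx_n|\aaa)$ is symmetric in $x_1,\dots,x_n$ for every $n$, and compatibility with the specialization $x_{n+1}=0$ shows $\eschur^{\lambda/\mu}(\xx|\aaa)$ is symmetric in the completion of the ring of symmetric functions.

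\textbf{Basis.} For a straight shape $\lambda$, the edge labeled tableaux of shape $\lambda$ with no edge labels are precisely the semistandard tableaux of shape $\lambda$, and inserting any edge label strictly raises the total degree; thus $\eschur^{\lambda}(\xx|\aaa) = s_{\lambda}(\xx) + HOT$. The transition matrix from $\{\eschur^{\lambda}\}_{\lambda}$ to $\{s_{\lambda}\}_{\lambda}$ is therefore unitriangular for the degree-lexicographic order, so $\{\eschur^{\lambda}\}_{\lambda}$ is a basis for (the completion of) the ring of symmetric functions.

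I expect the main obstacle to be the infinite setting: the train argument works only after passing to a large enough finite grid on which the horizontal boundary labels are completely determined (all $0$), which is exactly the content of~\eqref{eq:finite_transfer_boundary} and the surrounding discussion. Granting that, the rest is the routine railroad manipulation with the Yang--Baxter equation, the only boundary input being that $R(y/x)$ acts as the identity on $\epsilon_0 \otimes \epsilon_0$, visible from its explicit matrix.
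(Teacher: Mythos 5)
Your proposal is correct and follows essentially the same route as the paper: reduce to commutativity of adjacent transfer matrices, restrict to a finite grid where \eqref{eq:finite_transfer_boundary} forces hole boundaries, and run the train argument with the Yang--Baxter equation, using that the only surviving $R$-matrix entry on both ends has Boltzmann weight $1$. The extra remarks on grid enlargement, the $x_{n+1}=0$ specialization, and the unitriangular basis argument match what the paper states elsewhere (Section~\ref{sec:background} and the remark after Proposition~\ref{prop:combinatorial_partition_function}).
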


\begin{proof}
We will show that $\TMat(x) \TMat(y) = \TMat(y) \TMat(x)$, which implies the result.
We restrict to finite size transfer matrices, which by~\eqref{eq:finite_transfer_boundary} means we have the left and right boundary being holes:
\[
\iftikz
\begin{tikzpicture}[scale=0.7]
\draw[arrow=0.05,dscol] (-1,0)node[left,black]{} -- (7,0) node[right,black] {};
\draw[arrow=0.05,dscol] (-1,-1)node[left,black]{} -- (7,-1) node[right,black] {};
\node at (-1.5,0) {$\ss x$};
\node at (-1.5,-1) {$\ss y$};
\foreach\x in {0,1,...,6}{
\draw[arrow=0.25] (\x,-2) -- (\x,1);
}
\ebull{-1}{0}{0.1};
\ebull{-1}{-1}{0.1};
\ebull{7}{0}{0.1};
\ebull{7}{-1}{0.1};
\node[below] at (0,-1.8) {\tiny $k_m$};\node[above] at (0,0.8) {\tiny $k'_m$};
\node[below] at (1.5,-1.8) {$\cdots$};\node[above] at (1.5,0.8) {$\cdots$};
\node[below] at (3,-1.8) {\tiny $k_0$};\node[above] at (3,0.8) {\tiny $k'_0$};
\node[below] at (6,-1.8) {\tiny $k_M$};\node[above] at (6,0.8) {\tiny $k'_M$};
\node[below] at (4.5,-1.8) {$\cdots$};\node[above] at (4.5,0.8) {$\cdots$};
\end{tikzpicture}
\fi
\]
Now multiply $\TMat(x)\TMat(y)$ on the right by $R(x/y)$ and then use the train argument, where repeatedly applying the Yang--Baxter equation yields
\iftikz
\begin{align*}
\begin{tikzpicture}[scale=0.7,baseline=-13pt]
\draw[dscol,rounded corners,arrow=0.05] (-1.5,0) -- (-0.5,-1) -- (7,-1);
\draw[dscol,rounded corners,arrow=0.05] (-1.5,-1) -- (-0.5,0) -- (7,0);
\ebull{-1.5}{0}{0.1};
\ebull{-1.5}{-1}{0.1};
\ebull{7}{0}{0.1};
\ebull{7}{-1}{0.1};
\node at (-2,0) {$\ss x$};
\node at (-2,-1) {$\ss y$};
\foreach\x in {0,1,...,6}{
\draw[arrow=0.25] (\x,-2) -- (\x,1);
}
\node[below] at (0,-1.8) {\tiny $k_m$};\node[above] at (0,0.8) {\tiny $k'_m$};
\node[below] at (1.5,-1.8) {$\cdots$};\node[above] at (1.5,0.8) {$\cdots$};
\node[below] at (3,-1.8) {\tiny $k_0$};\node[above] at (3,0.8) {\tiny $k'_0$};
\node[below] at (6,-1.8) {\tiny $k_M$};\node[above] at (6,0.8) {\tiny $k'_M$};
\node[below] at (4.5,-1.8) {$\cdots$};\node[above] at (4.5,0.8) {$\cdots$};
\end{tikzpicture}
& =
\begin{tikzpicture}[scale=0.7,baseline=-13pt]
\draw[dscol,rounded corners,arrow=0.05] (-1,0) -- (0,0) -- (1,-1) -- (7.5,-1);
\draw[dscol,rounded corners,arrow=0.05] (-1,-1) -- (0,-1) -- (1,0) -- (7.5,0);
\ebull{-1}{0}{0.1};
\ebull{-1}{-1}{0.1};
\ebull{7.5}{0}{0.1};
\ebull{7.5}{-1}{0.1};
\node at (-1.5,0) {$\ss x$};
\node at (-1.5,-1) {$\ss y$};
\foreach\x in {-0.5,1.5,2.5,...,6.5}{
\draw[arrow=0.25] (\x,-2) -- (\x,1);
}
\node[below] at (-0.5,-1.8) {\tiny $k_m$};\node[above] at (-0.5,0.8) {\tiny $k'_m$};
\node[below] at (2,-1.8) {$\cdots$};\node[above] at (2,0.8) {$\cdots$};
\node[below] at (3.5,-1.8) {\tiny $k_0$};\node[above] at (3.5,0.8) {\tiny $k'_0$};
\node[below] at (6.5,-1.8) {\tiny $k_M$};\node[above] at (6.5,0.8) {\tiny $k'_M$};
\node[below] at (5,-1.8) {$\cdots$};\node[above] at (5,0.8) {$\cdots$};
\end{tikzpicture}
\allowdisplaybreaks \\
= 
\begin{tikzpicture}[scale=0.7,baseline=-13pt]
\draw[dscol,rounded corners,arrow=0.05] (-1,0) -- (5.5,0) -- (6.5,-1) -- (7.5,-1);
\draw[dscol,rounded corners,arrow=0.05] (-1,-1) -- (5.5,-1) -- (6.5,0) -- (7.5,0);
\ebull{-1}{0}{0.1};
\ebull{-1}{-1}{0.1};
\ebull{7.5}{0}{0.1};
\ebull{7.5}{-1}{0.1};
\node at (-1.5,0) {$\ss x$};
\node at (-1.5,-1) {$\ss y$};
\foreach\x in {0,1,...,5,7}{
\draw[arrow=0.25] (\x,-2) -- (\x,1);
}
\node[below] at (0,-1.8) {\tiny $k_m$};\node[above] at (0,0.8) {\tiny $k'_m$};
\node[below] at (1.5,-1.8) {$\cdots$};\node[above] at (1.5,0.8) {$\cdots$};
\node[below] at (3,-1.8) {\tiny $k_0$};\node[above] at (3,0.8) {\tiny $k'_0$};
\node[below] at (7,-1.8) {\tiny $k_M$};\node[above] at (7,0.8) {\tiny $k'_M$};
\node[below] at (4.5,-1.8) {$\cdots$};\node[above] at (4.5,0.8) {$\cdots$};
\end{tikzpicture}
& =
\begin{tikzpicture}[scale=0.7,baseline=-13pt]
\draw[dscol,rounded corners,arrow=0.05] (-1.5,0) -- (6,0) -- (7,-1);
\draw[dscol,rounded corners,arrow=0.05] (-1.5,-1) -- (6,-1) -- (7,0);
\ebull{-1.5}{0}{0.1};
\ebull{-1.5}{-1}{0.1};
\ebull{7}{0}{0.1};
\ebull{7}{-1}{0.1};
\node at (-2,0) {$\ss x$};
\node at (-2,-1) {$\ss y$};
\foreach\x in {0,1,...,6}{
\draw[arrow=0.25] (\x-.5,-2) -- (\x-.5,1);
}
\node[below] at (-.5,-1.8) {\tiny $k_m$};\node[above] at (-0.5,0.8) {\tiny $k'_m$};
\node[below] at (1,-1.8) {$\cdots$};\node[above] at (1,0.8) {$\cdots$};
\node[below] at (2.5,-1.8) {\tiny $k_0$};\node[above] at (2.5,0.8) {\tiny $k'_0$};
\node[below] at (5.5,-1.8) {\tiny $k_M$};\node[above] at (5.5,0.8) {\tiny $k'_M$};
\node[below] at (4,-1.8) {$\cdots$};\node[above] at (4,0.8) {$\cdots$};
\end{tikzpicture}
\end{align*}
\fi
We note that on both sides there is only one possible non-zero entry of the $R$-matrix for the corresponding boundary conditions and the weight of the entry is $1$.
Thus, we have that $\TMat(y) \TMat(x) \cdot 1 = 1 \cdot \TMat(x) \TMat(y)$ on the finite size transfer matrices, which extends to the full transfer matrices.
\end{proof}

We remark that a similar proof can be given using $\TMat^*$.

\subsection{Lattice model for factorial Schur polynomials}
\label{sec:factorial_model}

In this subsection, we describe a lattice model whose partition function is $s_{\lambda/\mu}(\xx|\aaa)$.
This model is originally due to Zinn-Justin~\cite{ZJ09}.
Note that when $\aaa = 0$, this is a classical five-vertex model for the Schur functions.

Define the Boltzmann weights for the $\ell$-matrix as
\begin{gather*}
\begin{array}{c*{4}{@{\hspace{40pt}}c}}
\textsf{a}_1 & \textsf{a}_2 & \textsf{b}_2 & \textsf{c}_1 & \textsf{c}_2
\\
\begin{tikzpicture}[scale=0.4,baseline=-2pt]
\draw[arrow=0.25,scol] (-1,0) node[left,black] {$\ss {0}$} -- (1,0) node[right,black] {$\ss {0}$};
\draw[arrow=0.25] (0,-1) node[below] {$\ss {0}$} -- (0,1) node[above] {$\ss {0}$};
\end{tikzpicture}
&
\begin{tikzpicture}[scale=0.4,baseline=-2pt]
\draw[arrow=0.25,scol] (-1,0) node[left,black] {$\ss {1}$} -- (1,0) node[right,black] {$\ss {1}$};
\draw[arrow=0.25] (0,-1) node[below] {$\ss {1}$} -- (0,1) node[above] {$\ss {1}$};
\end{tikzpicture}
&
\begin{tikzpicture}[scale=0.4,baseline=-2pt]
\draw[arrow=0.25,scol] (-1,0) node[left,black] {$\ss {1}$} -- (1,0) node[right,black] {$\ss {1}$};
\draw[arrow=0.25] (0,-1) node[below] {$\ss {0}$} -- (0,1) node[above] {$\ss{0}$};
\end{tikzpicture}
&
\begin{tikzpicture}[scale=0.4,baseline=-2pt]
\draw[arrow=0.25,scol] (-1,0) node[left,black] {$\ss {0}$} -- (1,0) node[right,black] {$\ss {1}$};
\draw[arrow=0.25] (0,-1) node[below] {$\ss {1}$} -- (0,1) node[above] {$\ss {0}$};
\end{tikzpicture}
&
\begin{tikzpicture}[scale=0.4,baseline=-2pt]
\draw[arrow=0.25,scol] (-1,0) node[left,black] {$\ss {1}$} -- (1,0) node[right,black] {$\ss {0}$};
\draw[arrow=0.25] (0,-1) node[below] {$\ss {0}$} -- (0,1) node[above] {$\ss {1}$};
\end{tikzpicture}
\\
1 & 1 & x_i - a_j & 1 & 1
\end{array}
\allowdisplaybreaks
\\
\ell_{ij}(x_i,a_j) =
\begin{pmatrix}
\begin{tikzpicture}[scale=0.6,baseline=4pt,rotate=45]
\draw (0,0)--(1,1);
\draw[scol] (0,1)--(1,0);
\ebull{0}{1}{0.1};
\ebull{1}{0}{0.1};
\ebull{1}{1}{0.1};
\ebull{0}{0}{0.1};
\end{tikzpicture}& 0 & 0 & 0 \\[1ex]
0 & \begin{tikzpicture}[scale=0.6,baseline=4pt,rotate=45]
\draw (0,0)--(1,1);
\draw[scol] (0,1)--(1,0);
\bbull{0}{0}{0.1};
\ebull{0}{1}{0.1};
\ebull{1}{0}{0.1};
\bbull{1}{1}{0.1};
\end{tikzpicture} & \begin{tikzpicture}[scale=0.6,baseline=4pt,rotate=45]
\draw (0,0)--(1,1);
\draw[scol] (0,1)--(1,0);
\ebull{0}{1}{0.1};
\ebull{1}{1}{0.1};
\bbull{1}{0}{0.1};
\bbull{0}{0}{0.1};
\end{tikzpicture} & 0 \\[1ex]
0 &\begin{tikzpicture}[scale=0.6,baseline=4pt,rotate=45]
\draw (0,0)--(1,1);
\draw[scol] (0,1)--(1,0);
\bbull{0}{1}{0.1};
\ebull{0}{0}{0.1};
\ebull{1}{0}{0.1};
\bbull{1}{1}{0.1};
\end{tikzpicture} & \begin{tikzpicture}[scale=0.6,baseline=4pt,rotate=45]
\draw (0,0)--(1,1);
\draw[scol] (0,1)--(1,0);
\bbull{1}{0}{0.1};
\bbull{0}{1}{0.1};
\ebull{1}{1}{0.1};
\ebull{0}{0}{0.1};
\end{tikzpicture} & 0 \\[1ex]
0 & 0 & 0 & \begin{tikzpicture}[scale=0.6,baseline=4pt,rotate=45]
\draw (0,0)--(1,1);
\draw[scol] (0,1)--(1,0);
\bbull{1}{0}{0.1};
\bbull{0}{1}{0.1};
\bbull{1}{1}{0.1};
\bbull{0}{0}{0.1};
\end{tikzpicture} \\
\end{pmatrix}_{ij}
=
\begin{pmatrix}
1 & 0 & 0 & 0 \\[1ex]
0 & 0 & 1 & 0 \\[1ex]
0 & 1 & x_{i}-a_{j} & 0 \\[1ex]
0 & 0 & 0 & 1 \\
\end{pmatrix}_{ij}
\in
\End(H_i \otimes V_j).
\end{gather*}

Similar to the previous section, we want to build a single row transfer matrix $\tmat(x_i)$ and then build the lattice model by a product of transfer matrices.
As before, define
\[
\tmat(x_i) = \prod_{j \in \ZZ}L_{ij}(x_i,a_j)
\qquad
\left(
\begin{tikzpicture}[scale=0.6,baseline=-1pt]
\draw[arrow=0.05,scol] (-1,0)node[left,black]{} -- (7,0) node[right,black] {};
\foreach\x in {0,1,...,6}{
\draw[arrow=0.25] (\x,-1) -- (\x,1);
}
\node[below] at (0,-0.8) {$\cdots$};\node[above] at (0,0.8) {$\cdots$};
\node[below] at (1,-0.8) {\tiny $k_{-2}$};\node[above] at (1,0.8) {\tiny $k'_{-2}$};
\node[below] at (2,-0.8) {\tiny $k_{-1}$};\node[above] at (2,0.8) {\tiny $k'_{-1}$};
\node[below] at (3,-0.8) {\tiny $k_0$};\node[above] at (3,0.8) {\tiny $k'_0$};
\node[below] at (4,-0.8) {\tiny $k_1$};\node[above] at (4,0.8) {\tiny $k'_1$};
\node[below] at (5,-0.8) {\tiny $k_2$};\node[above] at (5,0.8) {\tiny $k'_2$};
\node[below] at (6,-0.8) {$\cdots$}; \node[above] at (6,0.8) {$\cdots$};
\end{tikzpicture}
\right).
\]
Due to our $\ell$-matrix, we need to impose a technical condition that $\sup_j \abs{x_i - a_j} < 1$ (and a fixed $i$).
This means that there cannot be an infinite products, or for any sufficiently large finite transfer matrix, we cannot have the right boundary condition being $1$ (a particle).
In terms of an infinite transfer matrix describing particle motion, it means that no particle can ``escape'' to $\infty$.
Pictorially, for a finite transfer matrix, we impose the boundary conditions
\[
\begin{tikzpicture}[scale=0.6,baseline=-1pt]
\draw[arrow=0.05,scol] (-1,0)node[left,black]{$\ss 1$} -- (7,0) node[right,black] {$\ss 0$};
\foreach\x in {0,1,...,6}{
\draw[arrow=0.25] (\x,-1) -- (\x,1);
}
\node[below] at (0,-0.8) {\tiny $k_m$};\node[above] at (0,0.8) {\tiny $k'_m$};
\node[below] at (1.5,-0.8) {$\cdots$};\node[above] at (1.5,0.8) {$\cdots$};
\node[below] at (3,-0.8) {\tiny $k_0$};\node[above] at (3,0.8) {\tiny $k'_0$};
\node[below] at (6,-0.8) {\tiny $k_M$};\node[above] at (6,0.8) {\tiny $k'_M$};
\node[below] at (4.5,-0.8) {$\cdots$};\node[above] at (4.5,0.8) {$\cdots$};
\end{tikzpicture}
\]

Next, we note that every particle must move at least one position in $\tmat$.
Therefore, we introduce new notation to account for this shift by $\ket{\lambda}_{[\kappa]} = \ket{\lambda + \delta_{\kappa}}$, where the partition $\lambda + \delta_{\kappa} = (\lambda_i + \kappa - i)_{i=1}^{\kappa}$.
Here, our partition function is independent of the width (provided it is sufficiently wide), and so we can work in the finite grid $[n] \times [\lambda_1]$.
We impose the boundary conditions of the left and right boundaries being $1$ and $0$ respectively.
Similar to before, the top and bottom boundaries will be $\lambda + \delta_{k+n}$ and $\mu +\delta_k$.

\begin{thm}[{\cite{ZJ09}}]
\label{thm:factorial_partition_func}
For any $\kappa \geq \ell(\mu)$, we have
\[
s_{\lambda/\mu}(\xx_n | \aaa) = {}_{[\kappa+n]}\! \bra{\lambda} \tmat(x_n) \dotsm \tmat(x_2) \tmat(x_1) \ket{\mu}_{[\kappa]}.
\]
Furthermore, this model is integrable with the $r$-matrix given by
\[
 r_{ij}(x_i, x_j)=
 \begin{pmatrix}
\begin{tikzpicture}[scale=0.6,baseline=4pt]
\draw[scol] (0,0)--(1,1);
\draw[scol] (0,1)--(1,0);
\ebull{0}{1}{0.1};
\ebull{1}{0}{0.1};
\ebull{1}{1}{0.1};
\ebull{0}{0}{0.1};
\end{tikzpicture}& 0 & 0 & 0 \\[1ex]
0 & \begin{tikzpicture}[scale=0.6,baseline=4pt]
\draw[scol] (0,0)--(1,1);
\draw[scol] (0,1)--(1,0);
\bbull{0}{0}{0.1};
\ebull{0}{1}{0.1};
\ebull{1}{0}{0.1};
\bbull{1}{1}{0.1};
\end{tikzpicture} & \begin{tikzpicture}[scale=0.6,baseline=4pt]
\draw[scol] (0,0)--(1,1);
\draw[scol] (0,1)--(1,0);
\ebull{0}{1}{0.1};
\ebull{1}{1}{0.1};
\bbull{1}{0}{0.1};
\bbull{0}{0}{0.1};
\end{tikzpicture} & 0 \\[1ex]
0 &\begin{tikzpicture}[scale=0.6,baseline=4pt]
\draw[scol] (0,0)--(1,1);
\draw[scol] (0,1)--(1,0);
\bbull{0}{1}{0.1};
\ebull{0}{0}{0.1};
\ebull{1}{0}{0.1};
\bbull{1}{1}{0.1};
\end{tikzpicture} & \begin{tikzpicture}[scale=0.6,baseline=4pt]
\draw[scol] (0,0)--(1,1);
\draw[scol] (0,1)--(1,0);
\bbull{1}{0}{0.1};
\bbull{0}{1}{0.1};
\ebull{1}{1}{0.1};
\ebull{0}{0}{0.1};
\end{tikzpicture} & 0 \\[1ex]
0 & 0 & 0 & \begin{tikzpicture}[scale=0.6,baseline=4pt]
\draw[scol] (0,0)--(1,1);
\draw[scol] (0,1)--(1,0);
\bbull{1}{0}{0.1};
\bbull{0}{1}{0.1};
\bbull{1}{1}{0.1};
\bbull{0}{0}{0.1};
\end{tikzpicture} \\
\end{pmatrix}_{ij}
=
\begin{pmatrix}
1 & 0 & 0 & 0 \\[1ex]
0 & 0 & 1 & 0 \\[1ex]
0 & 1 & x_i - x_j & 0 \\[1ex]
0 & 0 & 0 & 1 \\
\end{pmatrix}_{ij}
\in 
\End(H_i \otimes H_j).
\]
\end{thm}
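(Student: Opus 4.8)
The plan is to establish the partition-function formula by induction on the number of variables $n$, driven by a single-variable branching rule for factorial Schur functions, and to settle integrability by a finite computation. First I would prove the base case $n=1$: that ${}_{[\kappa+1]}\!\bra{\lambda}\,\tmat(x)\,\ket{\mu}_{[\kappa]}$ equals $\prod_{\bbb\in\lambda/\mu}(x-a_{1+c(\bbb)})$ when $\lambda/\mu$ is a horizontal strip and $0$ otherwise -- which is precisely $s_{\lambda/\mu}(x\,|\,\aaa)$ since the only admissible tableau has all entries equal to $1$. This is an analysis of the one-row states of the $\ell$-model. Because it is a five-vertex model -- the configuration with horizontal labels $0,0$ passing through a vertical $1,1$ is absent -- a particle reaching a vertex from below while the horizontal line to its left is empty is forced onto the horizontal line (vertex $\mathsf{c}_1$, weight $1$); it then travels rightward over empty vertical sites (vertices $\mathsf{b}_2$, each contributing $x-a_j$ with $j$ the index of the vertical line) and possibly past vertical particles that continue straight up (vertices $\mathsf{a}_2$, weight $1$), and finally turns back up at an empty site (vertex $\mathsf{c}_2$, weight $1$); the particle injected at the left boundary behaves the same way, which accounts for the increase of the number of particles from $\kappa$ to $\kappa+1$. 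Tracking the net rightward displacement of particles, one sees that the bottom and top boundaries interlace exactly when $\lambda/\mu$ is a horizontal strip, that the state is then unique, and that its Boltzmann weight is the product of $x-a_j$ over the vertical lines carrying a $\mathsf{b}_2$ vertex. I would also record, as in the discussion preceding the theorem, that the partition function is independent of the grid width once it is sufficiently wide -- by the technical condition $\sup_j\abs{x_i-a_j}<1$ no particle can escape to the right, and far to the right only the vertex $\mathsf{a}_1$ of weight $1$ is admissible -- and independent of $\kappa\ge\ell(\mu)$, since enlarging $\kappa$ only inserts passive particles far to the left.

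For the inductive step I would use the branching rule
\[
s_{\lambda/\mu}(\xx_n,x_{n+1}\,|\,\aaa)=\sum_{\nu}s_{\lambda/\nu}(x_{n+1}\,|\,\aaa)\,s_{\nu/\mu}(\xx_n\,|\,\aaa),
\]
which is immediate from the combinatorial definition of $s_{\lambda/\mu}(\xx\,|\,\aaa)$ by splitting a semistandard tableau into its subtableau with entries $\le n$ and the horizontal strip of $(n+1)$'s. On the lattice side I would insert the resolution of the identity $\id_{\mcV}=\sum_{\nu}\ket{\nu}_{[\kappa+n]}\,{}_{[\kappa+n]}\!\bra{\nu}$ between $\tmat(x_{n+1})$ and $\tmat(x_n)\dotsm\tmat(x_1)$ -- that is, peel the top row off the lattice -- and then apply the base case to the first factor and the induction hypothesis to the second. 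This gives the formula for $\kappa$ large; combined with the $\kappa$-independence noted above it yields the statement for all $\kappa\ge\ell(\mu)$.

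Integrability comes down to verifying the $RLL$-relation
\[
\ell_{jk}(x_j,a_k)\,\ell_{ik}(x_i,a_k)\,r_{ij}(x_i,x_j)=r_{ij}(x_i,x_j)\,\ell_{ik}(x_i,a_k)\,\ell_{jk}(x_j,a_k)
\]
in $\End(H_i\otimes H_j\otimes V_k)$ with the stated $r$-matrix. As for the $L$- and $L^*$-models, this is a finite check: conservation of particle number makes the $8\times8$ identity block diagonal, so it reduces to a handful of short polynomial identities in $x_i,x_j,a_k$; equivalently, for each fixed choice of the six outer labels one checks that the two ways of sliding a crossing of the two horizontal strands past the vertical $V_k$-line have the same partition function.

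The hard part will not be any single deep step but the bookkeeping in the base case: nailing down the bijection between one-row states and horizontal strips, and above all keeping the shift convention $\ket{\nu}_{[\kappa]}=\ket{\nu+\delta_{\kappa}}$ aligned so that the vertical line carrying the $\mathsf{b}_2$ vertex for a box $\bbb\in\lambda/\mu$ has index exactly $1+c(\bbb)$, matching the factor $a_{1+c(\bbb)}$ attached to an entry $1$ in $\bbb$. Once that is settled, the induction and the integrability check are routine.
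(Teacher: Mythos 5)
The paper itself does not prove this theorem (it is quoted from Zinn-Justin~\cite{ZJ09}), so your proposal can only be measured against the strategy the paper uses for the analogous Proposition~\ref{prop:combinatorial_partition_function}, which is indeed the template you chose: one-row analysis plus branching, plus a finite check for the $r\ell\ell$ relation. The integrability part of your plan is fine. The gap is in the partition-function part: the branching rule you invoke for factorial Schur polynomials is false as stated. Because the weight of a box is $x_{T(\bbb)}-a_{T(\bbb)+c(\bbb)}$, with the $\aaa$-index depending on the \emph{entry}, splitting a tableau into its entries $\leq n$ and its entries equal to $n+1$ gives
\[
s_{\lambda/\mu}(\xx_{n+1}\,|\,\aaa)=\sum_{\nu}s_{\lambda/\nu}(x_{n+1}\,|\,\tau^{n}\aaa)\,s_{\nu/\mu}(\xx_n\,|\,\aaa),
\qquad (\tau^{n}\aaa)_j = a_{j+n},
\]
and the shift $\tau^n$ cannot be dropped: for $\lambda=(1)$ in two variables your rule yields $(x_1-a_1)+(x_2-a_1)$, whereas $s_{(1)}(\xx_2|\aaa)=(x_1-a_1)+(x_2-a_2)$. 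The same issue undermines the base case in the form you intend to apply it. It is not true that ${}_{[m+1]}\!\bra{\nu}\,\tmat(x)\,\ket{\rho}_{[m]}$ produces factors $x-a_{1+c(\bbb)}$ for every shift $m$: since the bra and ket are staircase-shifted, the $\mathsf{b}_2$ vertex attached to a box $\bbb$ of the horizontal strip sits on the column carrying $a_{m+1+c(\bbb)}$, so peeling off the $i$-th row of the full lattice (with $\kappa=0$) contributes $x_i-a_{i+c(\bbb)}$ -- exactly what is needed to recover $a_{T(\bbb)+c(\bbb)}$, and exactly what you can see in the paper's worked example for $s_{(2,0)}(\xx_2|\aaa)$, where the entry $2$ in the box of content $1$ corresponds to a $\mathsf{b}_2$ vertex on the column labeled $a_3$, not $a_2$. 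If you run your induction with the two lemmas as you stated them, you prove the identity for $\sum_T\prod_{\bbb}(x_{T(\bbb)}-a_{1+c(\bbb)})$, which is not the factorial Schur polynomial.

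The repair is exactly the bookkeeping you flagged as the hard part, so it deserves to be done explicitly: prove the shift-dependent one-row formula ${}_{[m+1]}\!\bra{\nu}\,\tmat(x)\,\ket{\rho}_{[m]}=\prod_{\bbb\in\nu/\rho}(x-a_{m+1+c(\bbb)})$ when $\nu/\rho$ is a horizontal strip (and $0$ otherwise), then either induct with the shifted branching rule above, observing that passing from the shift $[\kappa+i-1]$ to $[\kappa+i]$ is what implements $\aaa\mapsto\tau\aaa$, or avoid branching altogether and argue in one stroke via the bijection between lattice states and Gelfand--Tsetlin patterns sketched after the theorem. Relatedly, your justification of $\kappa$-independence (``enlarging $\kappa$ only inserts passive particles far to the left'') is not accurate: increasing $\kappa$ shifts every particle, hence every column label picked up by a $\mathsf{b}_2$ vertex, one step to the right, so the independence in the statement only holds once the convention tying the column labels $a_j$ to the shifted boundary states is pinned down; this needs to be stated and checked rather than asserted.
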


\begin{remark}
This vertex model model is related to the model of~\cite{BMN14} at $t = 0$ by flipping the horizontal edges and scaling the vertices $\mathsf{a}_2$ and $\mathsf{c}_1$ by $x_i$.
The effect on our partition function is multiplying by $x_1^1 \cdots x_n^n$ as there are exactly $i$ such vertices in the $i$-th row (contrast Theorem~\ref{thm:factorial_partition_func} with~\cite[Thm.~1]{BMN14}).
This answers the question poised about the relationship between the models poised in~\cite{BMN14}.
\end{remark}

For the remainder of the paper, we will take $\kappa = 0$.
We can build a semistandard tableau $T$ for each state in the lattice model by having the $i$-th row ${}_{[i]}\! \bra{\lambda^{(i)}}$ correspond to the shape of $T$ restricted to the entries at most $i$.
Thus, the sequence $(\lambda^{(1)}, \dotsc, \lambda^{(n)})$ forms the Gelfand--Tsetlin pattern corresponding to $T$.

\begin{ex}
Let $\lambda = (2,0)$.
We have the following lattice configurations for $s^{\lambda}(\xx_2|\aaa)$:
\[
\begin{array}{r@{\;=\;}c@{\;\;+\;\;}c@{\;\;+\;\;}c}
s_{(2,0)}(\xx_1|\aaa) &
\begin{tikzpicture}[scale=0.6,baseline={([yshift=-1.5ex]current bounding box.center)}]
\foreach\x in {0,1}{
\draw[scol] (0,3+\x) -- (5,3+\x);
\bbull{0}{3+\x}{0.1};
\ebull{5}{3+\x}{0.1};
}
\node at (-0.5,3) {$\ss x_{1}$};
\node at (-0.5,4) {$\ss x_{2}$};
\foreach\y in {1,2,3,4}{
\draw (\y,2)-- (\y,5);
 \node at (\y,1.5){$\ss a_\y$};}
\ebull{1}{2}{0.1};
\ebull{2}{2}{0.1};
\ebull{3}{2}{0.1};
\ebull{4}{2}{0.1};
\bbull{1}{5}{0.1};
\ebull{2}{5}{0.1};
\ebull{3}{5}{0.1};
\bbull{4}{5}{0.1};
\bbull{1.5}{3}{0.1};
\bbull{2.5}{3}{0.1};
\ebull{3.5}{3}{0.1};
\ebull{1.5}{4}{0.1};
\ebull{2.5}{4}{0.1};
\bbull{3.5}{4}{0.1};
\ebull{1}{3.5}{0.1};
\ebull{2}{3.5}{0.1};
\bbull{3}{3.5}{0.1};
\ebull{4}{3.5}{0.1};
\node at (2.5,6) {$\xrightarrow[\hspace{40pt}]{\displaystyle \lambda}$};
\end{tikzpicture}
&
\begin{tikzpicture}[scale=0.6,baseline={([yshift=-1.5ex]current bounding box.center)}]
\foreach\x in {0,1}{
\draw[scol] (0,3+\x) -- (5,3+\x);
\bbull{0}{3+\x}{0.1};
\ebull{5}{3+\x}{0.1};
}
\foreach\y in {1,2,3,4}{
\draw (\y,2) -- (\y,5);
 \node at (\y,1.5){$\ss a_\y$};}
\ebull{1}{2}{0.1};
\ebull{2}{2}{0.1};
\ebull{3}{2}{0.1};
\ebull{4}{2}{0.1};
\bbull{1}{5}{0.1};
\ebull{2}{5}{0.1};
\ebull{3}{5}{0.1};
\bbull{4}{5}{0.1};
\bbull{1.5}{3}{0.1};
\ebull{2.5}{3}{0.1};
\ebull{3.5}{3}{0.1};
\ebull{1.5}{4}{0.1};
\bbull{2.5}{4}{0.1};
\bbull{3.5}{4}{0.1};
\ebull{1}{3.5}{0.1};
\bbull{2}{3.5}{0.1};
\ebull{3}{3.5}{0.1};
\ebull{4}{3.5}{0.1};
\node at (2.5,6) {$\xrightarrow[\hspace{40pt}]{\displaystyle \lambda}$};
\end{tikzpicture}
&
\begin{tikzpicture}[scale=0.6,baseline={([yshift=-1.5ex]current bounding box.center)}]
\foreach\x in {0,1}{
\draw[scol] (0,3+\x) -- (5,3+\x);
\bbull{0}{3+\x}{0.1};
\ebull{5}{3+\x}{0.1};
}
\foreach\y in {1,2,3,4}{
\draw (\y,2) -- (\y,5);
\node at (\y,1.5){$\ss a_\y$};}
\ebull{1}{2}{0.1};
\ebull{2}{2}{0.1};
\ebull{3}{2}{0.1};
\ebull{4}{2}{0.1};
\bbull{1}{5}{0.1};
\ebull{2}{5}{0.1};
\ebull{3}{5}{0.1};
\bbull{4}{5}{0.1};
\ebull{1.5}{3}{0.1};
\ebull{2.5}{3}{0.1};
\ebull{3.5}{3}{0.1};
\bbull{1.5}{4}{0.1};
\bbull{2.5}{4}{0.1};
\bbull{3.5}{4}{0.1};
\bbull{1}{3.5}{0.1};
\ebull{2}{3.5}{0.1};
\ebull{3}{3.5}{0.1};
\ebull{4}{3.5}{0.1};
\node at (2.5,6) {$\xrightarrow[\hspace{40pt}]{\displaystyle \lambda}$};
\end{tikzpicture}
\\
& (x_1 - a_1)(x_1 - a_2)
& (x_1 - a_1)(x_2 - a_3)
& (x_2 - a_2)(x_2 - a_3),
\end{array}
\]
where the corresponding semistandard tableaux are
\[
\ytableaushort{11}\,,
\qquad\qquad
\ytableaushort{12}\,,
\qquad\qquad
\ytableaushort{22}\,.
\]
\end{ex}

\subsection{Cauchy identity}
\label{sec:cauchyidentity}

In this section, we prove our Cauchy type identity by deriving a relation between matrices $\TMat^*$ and $\tmat$.
For the remainder of this section, we will replace $\aaa$ with $-\aaa$ in the $\ell$-matrix and transfer matrix $\tmat$.
We will also assume the quantum space for $\TMat^*$ will use the dual space $H^*$ to distinguish it (this is also natural from the fact we want to reverse the order of the pairing for computing the partition function).

\begin{lemma}
\label{lemma:commcauchy}
We have the following commutation relation:
\[
\TMat^*(y) \tmat(x) = \frac{1}{1 - x y} \tmat(x) \TMat^*(y).
\]
\end{lemma}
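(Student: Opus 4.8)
\emph{Proof idea.} The plan is to establish this operator identity by the \emph{train argument}, the same mechanism used in the proof of Theorem~\ref{thm:elambda-symmetric}, but now applied to a \emph{mixed} intertwiner between the two different row operators: the $L^*$-matrix (for $\TMat^*(y)$, with quantum space $H^*$) and the $\ell$-matrix (for $\tmat(x)$, with $\aaa$ replaced by $-\aaa$, so that the relevant straight vertex carries the weight $x+a_k$).

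The first and essential step is to produce a \emph{Cauchy $R$-matrix}: an invertible operator $\widetilde{R}(x,y)$ on the (two-dimensional) horizontal spaces that intertwines an $L^*$-row sitting above an $\ell$-row with the same two rows in the opposite order, i.e. that satisfies the mixed $RLL$ form of the Yang--Baxter equation
\[
\widetilde{R}_{12}(x,y)\, L^{*}_{1k}(y,a_k)\, \ell_{2k}(x,-a_k) = \ell_{1k}(x,-a_k)\, L^{*}_{2k}(y,a_k)\, \widetilde{R}_{12}(x,y)
\]
in $\End(H^{*}_1 \otimes H_2 \otimes V_k)$ for every $k$, where the horizontal-space types are interchanged between lines $1$ and $2$. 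Since both $\ell$ and $L^*$ obey square ice, for each fixed choice of the three frozen edges there are only a few admissible configurations and both sides are low-degree polynomials in $x$, $y$, $a_k$, so this is a finite verification. The only delicate point — and the one I expect to be the main obstacle — is that the $a_k$-dependence must cancel between the two sides: on the $\ell$-row it enters through $x + a_k$, while on the $L^*$-row it enters through the deformed vertex weight $1 + a_k y$, and matching these forces the precise entries of $\widetilde{R}(x,y)$ (the relevant spectral combination turning out to be $xy$, consistent with the factor $\tfrac{1}{1-xy}$).

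With $\widetilde{R}(x,y)$ in hand, the argument follows the usual pattern. Working first in a large finite truncation $[2]\times[m,M]$ (legitimate by the convergence condition $\sup_j\abs{x-a_j}<1$ and $a_j = 0$ for $j \gg 1$, exactly as in Theorem~\ref{thm:factorial_partition_func} and Section~\ref{sec:latticemodels}), I attach $\widetilde{R}(x,y)$ to the two horizontal lines at the far left of the double-row lattice computing $\TMat^*(y)\,\tmat(x)$, and apply the mixed $RLL$ relation column by column to drag it to the far right; the rows have then been swapped, so the lattice computes $\tmat(x)\,\TMat^*(y)$, with $\widetilde{R}(x,y)$ now at the right edge. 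It remains to evaluate $\widetilde{R}(x,y)$ against the frozen boundary data: by~\eqref{eq:finite_transfer_boundary} and the boundary conventions for $\tmat$, both incoming horizontal edges on the left are particles, while on the right the $\ell$-row is empty and the $L^*$-row is a particle, so on each side $\widetilde{R}(x,y)$ acts as a scalar and the ratio of the left scalar to the right scalar is $\tfrac{1}{1-xy}$ (if one takes $\widetilde{R}$ rational this is immediate; if one prefers a polynomial normalization, the factor $1-xy$ appears on a single boundary and resumming the geometric series gives the stated constant). Letting the truncation grow then yields the identity for the full transfer matrices, interpreted in the completion where $\tfrac{1}{1-xy}=\sum_{k\ge 0}(xy)^k$, as elsewhere in the paper. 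Once the mixed $R$-matrix is found, the train manipulation and the boundary bookkeeping are routine.
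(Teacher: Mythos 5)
Your plan coincides with the paper's proof in its first two steps: the paper exhibits precisely the mixed intertwiner you postulate, an $\mathfrak{R}$-matrix in $\End(H \otimes H^*)$ whose only nonzero entries are $y$, $y$, $1$, $1-xy$, $1$, verifies the $\mathfrak{R}L\ell$ relation by a finite computation, and then runs the train argument across the double row. Up to the direction in which the crossing is dragged, this is the same argument, and your expectation that the spectral combination is $xy$ is right.

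The genuine gap is in your last step, where you assert that ``on each side $\widetilde{R}(x,y)$ acts as a scalar.'' This is true at the end where both frozen horizontal labels are particles (the ice rule forces the unique configuration $11\to 11$), but it fails at the other end: there the frozen outer labels (cf.~\eqref{eq:finite_transfer_boundary} and the boundary convention for $\tmat$) are $0$ on the $\ell$-line and $1$ on the $L^*$-line, and particle conservation alone permits \emph{two} incoming configurations — either the $\ell$-row's rightmost edge inside the lattice carries $0$ (crossing weight $1-xy$) or it carries $1$ and the crossing swaps the labels. So the crossing is not a scalar there, and your computation as written yields an unwanted extra term: the monodromy element in which a particle exits the $\tmat$-row to the right. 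The paper's proof devotes its key step to exactly this point: under the standing assumption $\sup_k \abs{x-a_k} < 1$ for the factorial-Schur row, any state in which a particle ``escapes'' to $+\infty$ along that row has vanishing weight, so this configuration contributes $0$ and only the $1-xy$ term survives. Note moreover that in a strictly finite truncation the bad term is \emph{not} zero — a particle can leave the finite grid after picking up only finitely many nonzero factors — it merely tends to $0$ as the width grows; so you must either argue directly on the (semi-)infinite lattice, as the paper does, or supplement your truncation with this limiting estimate. With that vanishing supplied, the rest of your outline goes through and reproduces the paper's proof.
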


\begin{proof}
The proof is similar to the way we proved that the transfer matrices commute for Theorem~\ref{thm:elambda-symmetric}.
The $\mathfrak{R}$-matrix
\[
\mathfrak{R}_{ij}(x,y)=
\begin{pmatrix}
\begin{tikzpicture}[scale=0.6,baseline=4pt]
\draw[scol] (0,0)--(1,1);
\draw[ddscol] (0,1)--(1,0);
\ebull{0}{1}{0.1};
\ebull{1}{0}{0.1};
\ebull{1}{1}{0.1};
\ebull{0}{0}{0.1};
\end{tikzpicture}& 0 & 0 & 0 \\[1ex]
0 & \begin{tikzpicture}[scale=0.6,baseline=4pt]
\draw[scol] (0,0)--(1,1);
\draw[ddscol] (0,1)--(1,0);
\bbull{0}{0}{0.1};
\ebull{0}{1}{0.1};
\ebull{1}{0}{0.1};
\bbull{1}{1}{0.1};
\end{tikzpicture} & \begin{tikzpicture}[scale=0.6,baseline=4pt]
\draw[scol] (0,0)--(1,1);
\draw[ddscol] (0,1)--(1,0);
\ebull{0}{1}{0.1};
\ebull{1}{1}{0.1};
\bbull{1}{0}{0.1};
\bbull{0}{0}{0.1};
\end{tikzpicture} & 0 \\[1ex]
0 &\begin{tikzpicture}[scale=0.6,baseline=4pt]
\draw[scol] (0,0)--(1,1);
\draw[ddscol] (0,1)--(1,0);
\bbull{0}{1}{0.1};
\ebull{0}{0}{0.1};
\ebull{1}{0}{0.1};
\bbull{1}{1}{0.1};
\end{tikzpicture} & \begin{tikzpicture}[scale=0.6,baseline=4pt]
\draw[scol] (0,0)--(1,1);
\draw[ddscol] (0,1)--(1,0);
\bbull{1}{0}{0.1};
\bbull{0}{1}{0.1};
\ebull{1}{1}{0.1};
\ebull{0}{0}{0.1};
\end{tikzpicture} & 0 \\[1ex]
0 & 0 & 0 & \begin{tikzpicture}[scale=0.6,baseline=4pt,]
\draw[scol] (0,0)--(1,1);
\draw[ddscol] (0,1)--(1,0);
\bbull{1}{0}{0.1};
\bbull{0}{1}{0.1};
\bbull{1}{1}{0.1};
\bbull{0}{0}{0.1};
\end{tikzpicture} \\
\end{pmatrix}_{ij}
=
\begin{pmatrix}
y & 0 & 0 & 0 \\[1ex]
0 & 0 & y & 0 \\[1ex]
0 & 1 & 1-x y & 0 \\[1ex]
0 & 0 & 0 & 1 \\
\end{pmatrix}_{ij}
\in 
\End(H \otimes H^*),
\]
together with $L$-matrix and $\ell$-matrix satisfy the $\mathfrak{R}L\ell$ relation:
\[
\ell_{jk}(x, a_k) L_{ik}(y, a_k) \mathfrak{R}_{ij}(x,y) = \mathfrak{R}_{ij}(x,y) L_{ik}(y, a_k) \ell_{jk}(x, a_k)
\quad
\left(
\begin{tikzpicture}[scale=0.6,baseline=7pt]
\draw(1,-1)--(1,2);
\draw[rounded corners,arrow=0.15,ddscol] (-1,1) node[left,black]{$\ss y$}--(0,0)--(2,0);
\draw[rounded corners,arrow=0.15,scol] (-1,0)  node[left,black]{$\ss x$}--(0,1)--(2,1);
\end{tikzpicture}
=
\begin{tikzpicture}[scale=0.6,baseline=7pt]
\draw (1,2)--(1,-1);
\draw[arrow=0.15,rounded corners,scol] (0,0) node[left,black]{$\ss x$}--(2,0)--(3,1);
\draw[arrow=0.15,rounded corners,ddscol] (0,1)node[left,black]{$\ss y$}--(2,1)--(3,0);
\end{tikzpicture}
\right).
\]

After multiplying the $\mathfrak{R}$-matrix to $\tmat(x) \TMat^*(y)$ and repeated application of the $\mathfrak{R}L\ell$ relation we obtain the equation
\[
\iftikz
\begin{tikzpicture}[scale=0.6,baseline=3pt]
\draw[ddscol,rounded corners,arrow=0.15] (-1,1)node[black,left]{$\ss 1$}--(0,0) node[below,black] {}--(7,0) node[black,right]{$\ss 1$};
\draw[scol,rounded corners,arrow=0.15](-1,0)node[black,left]{$\ss 1$}-- (0,1)node[above,black] {}--(7,1)  node[black,right]{$\ss 0$};
\foreach \x in {1,2,3,4,5}{
\draw (\x,-1)--(\x,2);
\node at (\x,2.2) {$\ss i_{\x}$};
\node at (\x,-1.2) {$\ss k_{\x}$};
}
\draw (6,-1)--(6,2);
\node at (6,2.2) {$\ss \dots$};
\node at (6,-1.2) {$\ss \dots$};
\node at (-2,1) {$\ss y$};
\node at (-2,0) {$\ss x$};
\end{tikzpicture}
=
\begin{tikzpicture}[scale=0.6,baseline=3pt]
\draw[scol,rounded corners,arrow=0.15] (0,0) node[left,black]{$\ss 1$}--(7,0) node[below,black]{}--(8,1) node[right,black]{$\ss 0$};
\draw[ddscol,rounded corners,arrow=0.15] (0,1)node[left,black]{$\ss 1$}--(7,1)node[above,black]{}--(8,0) node[right,black]{$\ss 1$};
\foreach \x in {1,2,3,4,5}{
\draw (\x,-1)--(\x,2);
\node at (\x,2.2)  {$\ss i_{\x}$};
\node at (\x,-1.2) {$\ss k_{\x}$};
}
\draw (6,-1)--(6,2);
\node at (6,2.2)  {$\ss \dots$};
\node at (6,-1.2) {$\ss \dots$};
\node at (-1,0) {$\ss x$};
\node at (-1,1) {$\ss y$};
\end{tikzpicture}
\fi
\]
On the left side, as there is a unique entry for the $\mathfrak{R}$-matrix with Boltzmann weight $1$, the partition function is $\tmat(x) \TMat^*(y)$.
On the right side there are two possible configurations:
\[
\iftikz
\begin{tikzpicture}[scale=0.6,baseline=3pt]
\draw[scol,rounded corners] (0,0) node[left,black]{$\ss 1$}--(7,0) node[below,black]{$\ss 1$}--(8,1) node[right,black]{$\ss 0$};
\draw[ddscol,rounded corners] (0,1)node[left,black]{$\ss 1$}--(7,1)node[above,black]{$\ss 0$}--(8,0) node[right,black]{$\ss 1$};
\foreach \x in {1,2,3,4,5}{
\draw (\x,-1)--(\x,2);
\node at (\x,2.2)  {$\ss i_{\x}$};
\node at (\x,-1.2) {$\ss k_{\x}$};
}
\draw (6,-1)--(6,2);
\node at (6,2.2)  {$\ss \dots$};
\node at (6,-1.2) {$\ss \dots$};
\node at (-1,0) {$\ss x$};
\node at (-1,1) {$\ss y$};
\end{tikzpicture}
+
\begin{tikzpicture}[scale=0.6,baseline=3pt]
\draw[scol,rounded corners] (0,0) node[left,black]{$\ss 1$}--(7,0) node[below,black]{$\ss 0$}--(8,1) node[right,black]{$\ss 0$};
\draw[ddscol,rounded corners] (0,1)node[left,black]{$\ss 1$}--(7,1)node[above,black]{$\ss 1$}--(8,0) node[right,black]{$\ss 1$};
\foreach \x in {1,2,3,4,5}{
\draw (\x,-1)--(\x,2);
\node at (\x,2.2)  {$\ss i_{\x}$};
\node at (\x,-1.2) {$\ss k_{\x}$};
}
\draw(6,-1)--(6,2);
\node at (6,2.2)  {$\ss \dots$};
\node at (6,-1.2) {$\ss \dots$};
\node at (-1,0) {$\ss x$};
\node at (-1,1) {$\ss y$};
\end{tikzpicture}
\fi
\]
Recall that we assume that $\sup_k \abs{x - a_k} < 1$, and therefore the partition function of the first configuration is $0$ (as otherwise it would mean a particle could ``escape'' to $\infty$ for the bottom row).
For the second configuration, the Boltzmann weight of the $\mathfrak{R}$-matrix is $1 - xy$.
This means we have the commutation relation
\[
\tmat(x) \TMat^*(y) = (1-xy) \TMat^*(y) \tmat(x)
\]
as desired.
\end{proof}

We can now state the identity that relates $s_{\lambda}$ and $\eschur^{\lambda}$.

\begin{thm}[Skew Cauchy identity]
\label{thm:cauchy}
For a positive integer $n$, define $\aaa^{n} = (a^{n}_{i})_{i \in \ZZ_{>0}}$, where $a^{n}_{i} = a_{i-n-1}$.
Assume $a^n_i = 0$ for all $i > N$ for some $N \in \ZZ_{>0} \sqcup \{\infty\}$.
The factorial Schur polynomials and edge Schur functions satisfy the relation
\[
\sum_{\lambda} s_{\lambda/\mu}(\xx_n | {-\aaa}) \eschur^{\lambda/\eta}(\yy_m | \aaa^{n}) = \prod_{\substack{1\leq i\leq n\\[0.2em]1\leq j\leq m}}\frac{1}{1-x_i y_j} \sum_{\kappa} s_{\eta/\kappa}(\xx_n | {-\aaa}) E^{\mu/\kappa}(\yy_m|\aaa^n).
\]
\end{thm}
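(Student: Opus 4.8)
The plan is to promote the single commutation relation of Lemma~\ref{lemma:commcauchy} to a commutation relation between the two full products of transfer matrices, in the same spirit as the train argument used in the proofs of Theorem~\ref{thm:elambda-symmetric} and Lemma~\ref{lemma:commcauchy}, and then to evaluate both sides of the resulting operator identity as matrix coefficients after inserting a complete set of states. Set $\tmat := \tmat(x_n) \cdots \tmat(x_1)$, built from the $\ell$-matrix with $\aaa$ replaced by $-\aaa$ as fixed in this section, and $\TMat^{*} := \TMat^*(y_1) \cdots \TMat^*(y_m)$, built with the vertical parameters $\aaa^n$. Applying Lemma~\ref{lemma:commcauchy} once for each of the $nm$ ordered pairs $(i,j)$ with $1\le i\le n$ and $1\le j\le m$ --- that is, commuting each $\TMat^*(y_j)$ to the right past every $\tmat(x_i)$ --- yields the operator identity
\[
\TMat^{*}\, \tmat \;=\; \biggl(\, \prod_{\substack{1 \le i \le n \\ 1 \le j \le m}} \frac{1}{1-x_i y_j} \,\biggr)\, \tmat\, \TMat^{*}
\]
on the appropriate completion of the physical space; the hypotheses that $a^n_i = 0$ for $i > N$ and $\sup_j \abs{x_i - a_j} < 1$ (the convergence condition attached to the $\ell$-matrix) guarantee that the infinite products converge and that this rearrangement is valid, so in particular the sums over partitions below are effectively finite.

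Next I would pair this identity against the boundary covectors and vectors dictated by the two partition-function formulas and insert $\sum_{\nu} \ket{\nu}\bra{\nu} = \id$ between the two operator factors on each side. By Theorem~\ref{thm:factorial_partition_func}, a coefficient of the form $\bra{\,\cdot\,}\tmat\ket{\,\cdot\,}$ is nonzero only when the boundary states are the Zinn-Justin staircase shifts of partitions at the correct levels, in which case it equals a factorial Schur polynomial $s_{\bullet/\bullet}(\xx_n | {-\aaa})$; by Proposition~\ref{prop:combinatorial_partition_function}, a coefficient $\bra{\,\cdot\,}\TMat^{*}\ket{\,\cdot\,}$ equals an edge Schur function $\eschur^{\bullet/\bullet}(\yy_m | \aaa^n)$. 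With the outer boundaries chosen to be $\eta$ and $\mu$ in the correctly shifted form, the left-hand side of the operator identity collapses, after inserting the complete set of states, to $\sum_{\lambda} s_{\lambda/\mu}(\xx_n | {-\aaa})\, \eschur^{\lambda/\eta}(\yy_m | \aaa^n)$ with $\lambda$ running over the intermediate states, and the right-hand side collapses to $\bigl(\prod_{i,j}(1-x_iy_j)^{-1}\bigr) \sum_{\kappa} s_{\eta/\kappa}(\xx_n | {-\aaa})\, \eschur^{\mu/\kappa}(\yy_m | \aaa^n)$, which is exactly the asserted identity.

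The real work, and the step I expect to be the main obstacle, is the bookkeeping in the previous paragraph: reconciling the staircase shift intrinsic to Zinn-Justin's model --- every particle must move, so $\tmat$ carries $\ket{\mu}_{[\kappa]} = \ket{\mu+\delta_\kappa}$ up to $\ket{\lambda+\delta_{\kappa+n}}$ --- with the edge Schur transfer matrices, which carry no such shift. One must choose the pairing (equivalently, the value of $\kappa$ and the precise shifted boundary partitions) so that after inserting the complete set of states the intermediate partition is matched consistently on both sides and the edge Schur factors emerge with the honest skew shapes $\lambda/\eta$ and $\mu/\kappa$ rather than with sheared skew shapes. Tracing how the staircase shift moves the vertical lines of the lattice should show that this compatibility holds precisely when the edge Schur parameters are reindexed by translating the vertical-line index by $n+1$, which is exactly the passage $a_i \mapsto a^n_i = a_{i-n-1}$ appearing in the statement; I would also double-check that the $\mathfrak{R}L\ell$ relation underlying Lemma~\ref{lemma:commcauchy} still holds once $\tmat$ is built from $-\aaa$ and $\TMat^{*}$ from $\aaa^n$, i.e.\ that the two parameter assignments agree on each shared vertical line. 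Granting this, everything else is a formal consequence of the Yang-Baxter relations established above.
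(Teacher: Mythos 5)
Your proposal follows essentially the same route as the paper: the paper's proof is precisely the train argument on the combined $(n+m)$-row lattice, i.e.\ Lemma~\ref{lemma:commcauchy} iterated over all $nm$ pairs of rows (the crossings contribute $\prod_{i,j}(1-x_iy_j)$ at one end and are trivial at the other), after which the two sides are read off, exactly as you propose, as the matrix elements ${}_n\bra{\eta}\TMat^*(y_1)\dotsm\TMat^*(y_m)\tmat(x_n)\dotsm\tmat(x_1)\ket{\mu}$ and ${}_n\bra{\eta}\tmat(x_1)\dotsm\tmat(x_n)\TMat^*(y_m)\dotsm\TMat^*(y_1)\ket{\mu}$ via Theorem~\ref{thm:factorial_partition_func} and Proposition~\ref{prop:combinatorial_partition_function}.

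One correction to how you frame the setup: $\tmat$ and $\TMat^*$ should not be built from two independent parameter sequences ($-\aaa$ for one, $\aaa^n$ for the other). The $\mathfrak{R}L\ell$ relation behind Lemma~\ref{lemma:commcauchy} is $\ell_{jk}(x,a_k)L_{ik}(y,a_k)\mathfrak{R}_{ij}(x,y)=\mathfrak{R}_{ij}(x,y)L_{ik}(y,a_k)\ell_{jk}(x,a_k)$, with one and the same $a_k$ on each shared vertical line; in the paper there is a single assignment of parameters to the columns of the combined lattice (the sign having been absorbed into the $\ell$-matrix at the start of Section~\ref{sec:cauchyidentity}), and the sequences $-\aaa$ and $\aaa^n$ in the statement are only the translation of the resulting matrix elements into the previously defined functions: the factorial factors are read off using the ``entry plus content'' indexing built into $s_{\lambda/\mu}(\xx_n|\aaa)$, while the edge factors are read off using the content-based indexing of Proposition~\ref{prop:combinatorial_partition_function} after the staircase shift by $n$ produced by the $\tmat$ rows, which is exactly where $a^n_i=a_{i-n-1}$ comes from. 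If the two row families literally carried different sequences on the same columns, the commutation relation you invoke would not have been established, so the ``double-check'' you defer is not optional fine print but is the actual content of the bookkeeping; carried out as in the paper it works, and you will also want the small observation recorded at the end of the paper's proof that padding $\lambda$ or $\kappa$ with trailing zeros changes neither side, which your insertion of a complete set of states implicitly uses when matching intermediate lattice states with sums over partitions.
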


\begin{proof}
The proof is essentially using the Yang--Baxter relation to relate
\begin{align*}
{}_n\!\!\,\bra{\eta} \TMat^*(y_1) \dotsm \TMat^*(y_m) \tmat(x_n) \dotsm \tmat(x_1) \ket{\mu} & = \sum_{\lambda} s_{\lambda/\mu}(\xx_n|{-\aaa}) \eschur^{\lambda/\eta}(\yy_m|\aaa^n),
\\
{}_n\!\!\,\bra{\eta} \tmat(x_1) \dotsm \tmat(x_n) \TMat^*(y_m) \dotsm \TMat^*(y_1) \ket{\mu} & = \sum_{\kappa} s_{\eta/\kappa}(\xx_n | {-\aaa}) E^{\mu/\kappa}(\yy_m|\aaa^n),
\end{align*}
where ${}_n\!\!\,\bra{\eta}$ denotes $\bra{\eta}$ but with each factor $\bra{k_i}_i \mapsto \bra{k_i}_{i+n}$.

Consider the partition function of the lattice below
\begin{equation}
\iftikz
\label{cauchyrhs1}
\mathcal{P}(\xx_n, \yy_m) =
\begin{tikzpicture}[scale=0.6,baseline=(current bounding box.center)]
\foreach\x in {0,1,2}{
\bbull{2}{\x}{0.1};
\bbull{2}{3+\x}{0.1};
}
\node at (5,6.5) {$\xrightarrow[\hspace{40pt}]{\displaystyle \eta}$};
\node at (5,-1.5) {$\xrightarrow[\displaystyle \mu]{\hspace{40pt}}$};
\foreach\y in {2,3,4,5,6}{
\draw (\y+1,-1) -- (\y+1,6);}
\draw[ddscol,rounded corners](2,5) --  (8,5) -- (11,2);
\draw[ddscol,rounded corners](2,4) --  (8,4) -- (10.5,1.5);
\draw[ddscol,rounded corners](2,3) --  (8,3) -- (10,1);
\draw[scol,rounded corners](2,2) --  (8,2) -- (10,4);
\draw[scol,rounded corners](2,1) --  (8,1) -- (10.5,3.5);
\draw[scol,rounded corners](2,0) -- (8,0) -- (11,3);
\bbull{10}{1}{0.1};
\bbull{10.5}{1.5}{0.1};
\bbull{11}{2}{0.1};
\ebull{10}{4}{0.1};
\ebull{10.5}{3.5}{0.1};
\ebull{11}{3}{0.1};
\node at (7,6.2) {$ \cdots$};
\node at (7,-1.2) {$ \cdots$};
\node at (1.5,5) {$\ss y_1$};
\node at (1.5,4) {$\ss \vdots$};
\node at (1.5,3) {$\ss y_m$};
\node at (1.5,2) {$\ss x_n$};
\node at (1.5,1) {$\ss \vdots$};
\node at (1.5,0) {$\ss x_1$};
\end{tikzpicture}
\fi
\end{equation}
As the right edges are placed at infinity, the product of the $\mathfrak{R}$ factorizes
\begin{equation}
\iftikz
\label{cauchyrhs2}
\mathcal{P}(\xx_n, \yy_m) =
\begin{tikzpicture}[scale=0.6,baseline=(current bounding box.center)]
\foreach\x in {0,1,2}{
\draw[scol] (1,\x) -- (7,\x);
\draw[ddscol] (1,3+\x) -- (7,3+\x);
\bbull{1}{\x}{0.1};
\bbull{1}{3+\x}{0.1};
\bbull{7}{3+\x}{0.1};
\ebull{7}{\x}{0.1};
}
\node at (4,6.5) {$\xrightarrow[\hspace{40pt}]{\displaystyle \eta}$};
\node at (4,-1.5) {$\xrightarrow[\displaystyle \mu]{\hspace{40pt}}$};
\foreach\y in {1,2,3,4,5}{
\draw (\y+1,-1) -- (\y+1,6);}
\draw[ddscol] (9.5,5) -- (12.5,2);
\draw[ddscol] (9.5,4) -- (12,1.5);
\draw[ddscol] (9.5,3) -- (11.5,1);
\draw[scol] (9.5,2) -- (11.5,4);
\draw[scol] (9.5,1) -- (12,3.5);
\draw[scol] (9.5,0) -- (12.5,3);
\node at (6,6.2) {$ \cdots$};
\node at (6,-1.2) {$ \cdots$};
\foreach\x in {1,2,3}{
\bbull{9.5}{2+\x}{0.1};
\ebull{9.5}{\x-1}{0.1};}
\bbull{12.5}{2}{0.1};
\bbull{12}{1.5}{0.1};
\bbull{11.5}{1}{0.1};
\ebull{11.5}{4}{0.1};
\ebull{12}{3.5}{0.1};
\ebull{12.5}{3}{0.1};
\node at (8.2,3) {$\times$};
\node at (0.5,5) {$\ss y_1$};
\node at (0.5,4) {$\ss \vdots$};
\node at (0.5,3) {$\ss y_m$};
\node at (0.5,2) {$\ss x_n$};
\node at (0.5,1) {$\ss \vdots$};
\node at (0.5,0) {$\ss x_1$};
\end{tikzpicture}
\fi
\end{equation}
and so we have
\[
\dfrac{\mathcal{P}(\xx_n,\yy_m)}{\displaystyle \prod_{\substack{1\leq i\leq n\\[0.2em]1\leq j\leq m}}{(1-x_i y_j)}} = \sum_{\lambda} s_{\lambda/\mu}(\xx_n|{-\aaa}) \eschur^{\lambda/\eta}(\yy_m|\aaa^{n})
\]
We repeatedly apply the $\mathrm{RLL}$ relation to obtain an alternative expression for $\mathcal{P}(\xx_n,\yy_m)$.
\begin{equation}
\label{eq:RT_model}
\iftikz
\begin{tikzpicture}[scale=0.6,baseline=(current bounding box.center)]
\node at (-6,3) {$\mathcal{P}(\xx_n,\yy_m) =$};
\foreach\x in {0,1,2}{
\draw[ddscol] (2,\x) -- (8,\x);
\draw[scol] (2,3+\x) -- (8,3+\x);
\bbull{2}{\x}{0.1};
\bbull{2}{3+\x}{0.1};
\ebull{8}{3+\x}{0.1};
\bbull{8}{\x}{0.1};
}
\node at (5,6.5) {$\xrightarrow[\hspace{40pt}]{\displaystyle \eta}$};
\node at (5,-1.5) {$\xrightarrow[\displaystyle \mu]{\hspace{40pt}}$};
\foreach\y in {2,3,4,5,6}{
\draw (\y+1,-1) -- (\y+1,6);}
\node at (7,6.2) {$ \cdots$};
\node at (7,-1.2) {$ \cdots$};
\draw[scol] (2,5) -- (-1,2);
\bbull{-1}{2}{0.1};
\node at (-1.2,1.8) {$\ss x_n$};
\draw[scol] (2,4) -- (-0.5,1.5);
\node at (-0.7,1.5) {$\ss \ddots$};
\bbull{-0.5}{1.5}{0.1};
\draw[scol] (2,3) -- (0,1);
\node at (-0.2,0.7) {$\ss x_1$};
\bbull{0}{1}{0.1};
\draw[ddscol] (2,2) -- (0,4);
\node at (-0.3,4.2) {$\ss y_1$};
\bbull{0}{4}{0.1};
\bbull{-0.5}{3.5}{0.1};
\bbull{-1}{3}{0.1};
\draw[ddscol] (2,1) -- (-0.5,3.5);
\node at (-0.8,3.8) { \reflectbox{$\ss \ddots$}};
\draw[ddscol] (2,0) -- (-1,3);
\node at (-1.4,3.2) {$\ss y_m$};
\end{tikzpicture}
\fi
\end{equation}
In the above lattice, the configuration from the $R$ matrices is trivial. Hence, we get that 
\[
\mathcal{P}(\xx_n,\yy_m)= \sum_{\kappa} s_{\eta/\kappa}(\xx_n | {-\aaa}) E^{\mu/\kappa}(\yy_m|\aaa^n).
\]
Then the desired result is obtained by equating the two expressions of $\mathcal{P}(\xx_n,\yy_m)$.

We also remark that this equation is fixed under considering $\lambda$ (or $\kappa$) as a sequence of length $\ell \geq n$ with trailing zeros as we simply add particles that travel vertically to the model in~\eqref{eq:RT_model}, which have a Boltzmann weight of $1$.
\end{proof}
 
The function $\eschur^{\lambda}(\yy_m|\aaa^n)$ is not well-defined in the limit $n \to \infty$, so we cannot conclude any relationship with the dual Schur functions defined by Molev~\cite{Molev-dualschur} from the Cauchy--duality statement~\cite[Prop.~5.2]{Molev-dualschur}.

\subsection{Basis properties and variations}
\label{sec:edge_variations}

Next we discuss the properties of the basis of edge Schur functions and discuss some variations by multiplying by factor or specializing $\aaa = \alpha$.

We begin by remarking that the basis $\{\eschur^{\lambda}(\yy|\aaa)\}_{\lambda}$ does not seem to be a nice basis as it does not contain $1$ as $\eschur^{\emptyset}(\yy|\aaa)$ is an infinite sum and must lie in the completion.
Furthermore, the expansion of $\eschur^{\lambda}(\yy|\aaa) \eschur^{\mu}(\yy|\aaa)$ is not a finite sum; consider $\eschur^{1}(\yy_1|\aaa) \eschur^{1}(\yy_1|\aaa)$.
To account for this, one natural variation $\overline{\eschur}^{\lambda}$ would to disallow edge labels on the top boundary, or any horizontal edges for columns to the right of $\lambda_1$.
Therefore, we have
\begin{equation}
\label{eq:es_variant}
\overline{\eschur}^{\lambda}(\yy_m|\aaa) = \prod_{\substack{1 \leq j \leq m \\[0.2em] k > \lambda_1+1}} (1 + a_k y_i)^{-1} \eschur^{\lambda}(\yy_m|\aaa),
\end{equation}
and $\overline{\eschur}^{\emptyset}(\yy|\aaa) = 1$ and $\overline{\eschur}^{\lambda}(\yy_n|\aaa)$ to contain only a finite number of terms.
One drawback is these are not well-defined using Equation~\eqref{eq:es_variant} for skew shapes as we would have $E^{\emptyset}(\yy_n|\aaa) \neq E^{\lambda/\lambda}(\yy_n|\aaa)$ for any $\lambda \neq \emptyset$.
Instead, we would have to define a skew shape by the branching rule (\textit{cf.}~Equation~\eqref{eq:branching_rule})
\[
\overline{\eschur}^{\lambda}(\xx, \yy|\aaa) = \sum_{\mu \subseteq \lambda} \overline{\eschur}^{\lambda/\mu}(\yy|\aaa) \overline{\eschur}^{\mu}(\xx|\aaa),
\]
where we are only allowed to have edge labels on actual boxes.

Another variation on this would be to simply disallow all edge labels above the main diagonal, so we have the analog of~\eqref{eq:es_variant}:
\[
\dualfact^{\lambda/\mu}(\yy_m|\aaa) = \prod_{\substack{1 \leq j \leq m \\[0.2em] k \geq 0}} (1 + a_k y_i)^{-1} \eschur^{\lambda/\mu}(\yy_m|\aaa).
\]
Hence, this equals $\eschur^{\lambda/\mu}(\yy_n|\aaa_<)$, where $\aaa_< = (a_{-2}, a_{-1}, 0, 0, \ldots)$ denotes the specialization of $a_i = 0$ for all $i \geq 0$ and is the refined version of the $\CC^{\times}$-equivariant homology $H_{\bullet}^{\CC^{\times}}(\Gr)$ from the introduction.
These have the same properties as $\overline{\eschur}^{\lambda}(\yy|\aaa)$, namely $\dualfact^{\emptyset}(\yy|\aaa) = 1$ and $\dualfact^{\lambda/\mu}(\yy_n|\aaa) \in \ZZ[\yy_n,\aaa]$.
One advantage over $\overline{\eschur}^{\lambda}(\yy|\aaa)$ is they satisfy the same branching rule as the edge Schur functions~\eqref{eq:branching_rule} with no ambiguity for skew shapes since the ratio with $\eschur^{\lambda/\mu}(\yy_n|\aaa)$ does not depend on $\lambda$ or $\mu$.

A natural question would be to explicitly determine the functions that are dual to the factorial Schur functions under the Hall inner product.
Our skew Cauchy identity when $\mu = \eta = \emptyset$ becomes
\[
\sum_{\lambda} s_{\lambda}(\xx_n | {-\aaa}) \prod_{\substack{1\leq k \leq N \\[0.2em]1\leq j\leq m}}(1 + a_{k}y_{j})^{-1} \eschur^{\lambda}(\yy_m | \aaa^{n}) = \prod_{\substack{1\leq i\leq n\\[0.2em]1\leq j\leq m}}\frac{1}{1-x_i y_j},
\]
where the product on the left-hand side is $E^{\emptyset}(\yy_m|\aaa^n)$.
Therefore, we need to distribute all of the leading factors from the Cauchy identity:
\[
\mathscr{E}^{\lambda/\mu}(\yy_m|\aaa) = \prod_{\substack{-n \leq k\leq N \\[0.2em] 1\leq j \leq m}} (1+a_k y_j)^{-1} \eschur^{\lambda/\mu}(\yy_m|{-\aaa}),
\]
for some fixed constants $N \geq \lambda_1$ and $\ell(\lambda) \leq n$ with $a_i = 0$ for all $i > N$ (again, we allow $N = \infty$).
We can obtain a combinatorial description by distributing the leading factors across the vertex model, that is we form a modified $L$-matrix by $(1 - a_j x_i)^{-1} L_{ij}(x_i, a_j)$ (so each Boltzmann weight in~\eqref{eq:dual_schur_vertices} is multiplied by this factor; note $\mathsf{a}_1 = 1$).
This leads to a combinatorial description analogous to Proposition~\ref{prop:combinatorial_partition_function} using similar to edge labeled tableau but we instead allow the vertical edges to have finite multisets (possibly empty) with an analogous semistandard condition.
Scaling the $L$-matrices by a constant factor does not change the integrability or the $R$-matrix, and the same analysis performed above holds for $\mathscr{E}^{\lambda/\mu}(\yy_m|\aaa)$.

However, the $\mathscr{E}^{\lambda/\mu}$ are also less than satisfactory as they depend on the number of trailing zeros in $\lambda$.
Therefore, better functions to consider would be
\[
\widehat{\mathscr{E}}^{\lambda}(\yy_m|\aaa) = \prod_{\substack{-\ell(\lambda) \leq k\leq N \\[0.2em] 1\leq j \leq m}} (1+a_k y_j)^{-1} \eschur^{\lambda}(\yy_m|{-\aaa}) = \prod_{\substack{-n \leq k < -\ell(\lambda) \\[0.2em] 1\leq j \leq m}} (1+a_k y_j) \mathscr{E}^{\lambda}(\yy_m|\aaa).
\]
Unlike for $\mathscr{E}^{\emptyset}(\yy|\aaa)$, we have $\widehat{\mathscr{E}}^{\emptyset}(\yy|\aaa) = 1$.
Yet, they do not have a simple relationship for the skew functions similar to the $\overline{E}^{\lambda}$ variation.
Likewise, $\widehat{\mathscr{E}}^{\lambda/\mu}(\yy_m|\aaa)$ admits a simple combinatorial description by only allowing (vertical) edge labels on actual boxes.

Next, we define the (skew) \defn{dual Schur functions} $\widehat{s}_{\lambda/\mu}(\yy|\aaa)$ following Molev~\cite{Molev-dualschur} after substituting $a_i \leftrightarrow a_{-i}$, which are built up inductively using the branching rule
\[
\widehat{s}_{\lambda/\mu}(\yy_1|\aaa) = \prod_{\bbb \in \lambda/\mu} \frac{y_1}{1 - a_{c(\bbb)} y_1},
\qquad
\widehat{s}_{\lambda/\mu}(y', \yy|\aaa) = \sum_{\nu} \widehat{s}_{\nu/\mu}(\yy|\aaa) \prod_{\bbb \in \nu/\mu} \frac{1-a_{c(\bbb)-1} y'}{1-a_{c(\bbb)}y'} \widehat{s}_{\lambda/\nu}(y'|\aaa),
\]
where the sum is over all $\mu \subseteq \nu \subseteq \lambda$ such that $\lambda/\nu$ is a horizontal strip.
From this description, we can compute $\widehat{s}_{\lambda/\nu}(\yy_1|\aaa)$ as the partition function of the vertex model
\begin{equation}
\label{eq:substitution_model}
\begin{array}{c*{4}{@{\hspace{40pt}}c}}
\textsf{a}_1 & \textsf{b}_1 & \textsf{b}_2 & \textsf{c}_1 & \textsf{c}_2
\\
\begin{tikzpicture}[scale=0.4,baseline=-2pt]
\draw[arrow=0.25,mscol] (-1,0) node[left,black] {$\ss {0}$} -- (1,0) node[right,black] {$\ss {0}$};
\draw[arrow=0.25] (0,-1) node[below] {$\ss {0}$} -- (0,1) node[above] {$\ss{0}$};
\end{tikzpicture}
&
\begin{tikzpicture}[scale=0.4,baseline=-2pt]
\draw[arrow=0.25,mscol] (-1,0) node[left,black] {$\ss {0}$} -- (1,0) node[right,black] {$\ss {0}$};
\draw[arrow=0.25] (0,-1) node[below] {$\ss {1}$} -- (0,1) node[above] {$\ss {1}$};
\end{tikzpicture}
&
\begin{tikzpicture}[scale=0.4,baseline=-2pt]
\draw[arrow=0.25,mscol] (-1,0) node[left,black] {$\ss {1}$} -- (1,0) node[right,black] {$\ss {1}$};
\draw[arrow=0.25] (0,-1) node[below] {$\ss {0}$} -- (0,1) node[above] {$\ss {0}$};
\end{tikzpicture}
&
\begin{tikzpicture}[scale=0.4,baseline=-2pt]
\draw[arrow=0.25,mscol] (-1,0) node[left,black] {$\ss {0}$} -- (1,0) node[right,black] {$\ss {1}$};
\draw[arrow=0.25] (0,-1) node[below] {$\ss {1}$} -- (0,1) node[above] {$\ss {0}$};
\end{tikzpicture}
&
\begin{tikzpicture}[scale=0.4,baseline=-2pt]
\draw[arrow=0.25,mscol] (-1,0) node[left,black] {$\ss {1}$} -- (1,0) node[right,black] {$\ss {0}$};
\draw[arrow=0.25] (0,-1) node[below] {$\ss {0}$} -- (0,1) node[above] {$\ss {1}$};
\end{tikzpicture}
\\
1 & 1 & {\displaystyle \frac{y_i}{1-a_k y_1}} & 1 & {\displaystyle \frac{y_i}{1-a_k y_1}}
\end{array}
\end{equation}
similar to $\eschur^{\lambda/\nu}(\yy_m|\aaa)$ and $\mathscr{E}^{\lambda/\nu}(\yy_m|\aaa)$.
We thank Paul Zinn-Justin for noting this vertex model.
In particular, these differ from the Boltzmann weights to construct $\mathscr{E}^{\lambda/\nu}(\yy_m|\aaa)$ by multiplying $\textsf{b}_1$ and $\textsf{c}_1$ by $(1 - a_k y_1)$, and both of these vertices correspond to particles at the positions of $\nu$.
We simplify
\[
\prod_{\bbb \in \nu/\mu} \frac{1-a_{c(\bbb)-1} y'}{1-a_{c(\bbb)}y'} = \prod_{k=1}^n \frac{1-a_{\mu_k-k} y'}{1-a_{\nu_k-k}y'}
\]
by fixing the row $k$ and canceling common factors.
By noticing that $\nu_k-k$ are the positions of the particles, arrive at the same vertex model as for $\mathscr{E}^{\lambda/\nu}$ and have
\[
\prod_{k=1}^n (1-a_{\nu_k-k}y')^{-1} \widehat{s}_{\lambda/\nu}(y'|\aaa) = \mathscr{E}^{\lambda/\nu}(y'|\aaa).
\]
Hence, for $n \geq \ell(\lambda)$, from the branching rule we obtain the following relation.

\begin{cor}
\label{cor:dual_schur_comparison}
We have
\[
\widehat{s}_{\lambda/\nu}(\yy_m|\aaa) = \prod_{\substack{1 \leq k \leq \ell(\nu) \\[0.2em] 1 \leq j \leq m}} (1 - a_{\nu_k-k} y_j) \widehat{\mathscr{E}}^{\lambda/\nu}(\yy_m|\aaa).
\]
In particular, we have $\widehat{s}_{\lambda}(\yy_m|\aaa) = \widehat{\mathscr{E}}^{\lambda}(\yy_m|\aaa)$.
\end{cor}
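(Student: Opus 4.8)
The plan is to prove, by induction on the number $m$ of $\yy$-variables, the relation
\[
\widehat{s}_{\lambda/\nu}(\yy_m|\aaa) = \prod_{k=1}^{n}\prod_{j=1}^{m}(1 - a_{\nu_k - k}\,y_j)\cdot\mathscr{E}^{\lambda/\nu}(\yy_m|\aaa)
\qquad(n \geq \ell(\lambda)),
\]
where $\nu$ is padded by trailing zeros to length $n$. The corollary then follows by splitting the product over $k$ into the ranges $1 \leq k \leq \ell(\nu)$ and $\ell(\nu) < k \leq n$, and identifying the factors indexed by the trailing zeros $\nu_{\ell(\nu)+1} = \dotsb = \nu_n = 0$ as precisely what distinguishes $\widehat{\mathscr{E}}^{\lambda/\nu}(\yy_m|\aaa)$ from $\mathscr{E}^{\lambda/\nu}(\yy_m|\aaa)$ (the defining passage being the one recorded just before the statement); the case $\nu = \emptyset$, where $\ell(\nu)=0$ and the leftover product is empty, gives the ``in particular'' assertion. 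The base case $m = 1$ is exactly the single-variable identity $\prod_{k=1}^{n}(1 - a_{\nu_k-k}\,y_1)^{-1}\widehat{s}_{\lambda/\nu}(y_1|\aaa) = \mathscr{E}^{\lambda/\nu}(y_1|\aaa)$ established just above, obtained by comparing the vertex model~\eqref{eq:substitution_model} for $\widehat{s}_{\lambda/\nu}(y_1|\aaa)$ with the modified $L$-matrix used for $\mathscr{E}^{\lambda/\nu}$.

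For the inductive step, write $\yy_m = (y_1, \yy')$ with $\yy' = (y_2, \dotsc, y_m)$ and peel off $y_1$ via Molev's branching rule:
\[
\widehat{s}_{\lambda/\nu}(\yy_m|\aaa) = \sum_{\rho}\widehat{s}_{\rho/\nu}(\yy'|\aaa)\prod_{\bbb \in \rho/\nu}\frac{1 - a_{c(\bbb)-1}\,y_1}{1 - a_{c(\bbb)}\,y_1}\;\widehat{s}_{\lambda/\rho}(y_1|\aaa),
\]
the sum being over $\nu \subseteq \rho \subseteq \lambda$ with $\lambda/\rho$ a horizontal strip. Now use the telescoping identity $\prod_{\bbb \in \rho/\nu}\tfrac{1 - a_{c(\bbb)-1}y_1}{1 - a_{c(\bbb)}y_1} = \prod_{k=1}^{n}\tfrac{1 - a_{\nu_k-k}y_1}{1 - a_{\rho_k-k}y_1}$, the base case to rewrite $\widehat{s}_{\lambda/\rho}(y_1|\aaa) = \prod_{k=1}^{n}(1 - a_{\rho_k-k}y_1)\,\mathscr{E}^{\lambda/\rho}(y_1|\aaa)$, and the induction hypothesis (for $m-1$ variables, applicable since $\ell(\rho)\le\ell(\lambda)\le n$) to rewrite $\widehat{s}_{\rho/\nu}(\yy'|\aaa) = \prod_{k=1}^{n}\prod_{j=2}^{m}(1 - a_{\nu_k-k}y_j)\,\mathscr{E}^{\rho/\nu}(\yy'|\aaa)$. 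The decisive point is that the product $\prod_{k=1}^{n}(1 - a_{\rho_k-k}y_1)$ coming from the base case cancels the denominator of the telescoped correction factor, leaving the $\rho$-independent factor $\prod_{k=1}^{n}(1 - a_{\nu_k-k}y_1)$, which can be pulled outside the sum.

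After this cancellation the right-hand side is $\prod_{k=1}^{n}\prod_{j=1}^{m}(1 - a_{\nu_k-k}y_j)\sum_{\rho}\mathscr{E}^{\lambda/\rho}(y_1|\aaa)\,\mathscr{E}^{\rho/\nu}(\yy'|\aaa)$, and the remaining sum equals $\mathscr{E}^{\lambda/\nu}(\yy_m|\aaa)$ by the branching rule~\eqref{eq:branching_rule}, which $\mathscr{E}$ inherits from $\eschur$ because the prefactor $\prod_{-n \leq k \leq N,\,j}(1 + a_k y_j)^{-1}$ defining $\mathscr{E}$ is multiplicative in the variables and independent of $\lambda,\nu$ (and $\mathscr{E}$ is symmetric in $\yy$, so any single variable may be peeled). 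This closes the induction. I expect the only genuinely delicate point to be the bookkeeping of the prefactors: one must keep every product running over $k = 1, \dotsc, n$, with all shapes padded to length $n$, so that the telescoping cancellation is exact, and then correctly isolate the contribution of the trailing zeros of $\nu$ when converting $\mathscr{E}^{\lambda/\nu}$ into $\widehat{\mathscr{E}}^{\lambda/\nu}$ at the end; everything else is a routine manipulation of the two branching rules.
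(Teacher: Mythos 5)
Your proof is correct and is essentially the paper's argument: the paper establishes the same single-variable identity $\prod_{k=1}^n(1-a_{\nu_k-k}y)^{-1}\widehat{s}_{\lambda/\nu}(y|\aaa)=\mathscr{E}^{\lambda/\nu}(y|\aaa)$ via the vertex-model comparison and the telescoped product $\prod_{\bbb\in\nu/\mu}\frac{1-a_{c(\bbb)-1}y}{1-a_{c(\bbb)}y}=\prod_k\frac{1-a_{\mu_k-k}y}{1-a_{\nu_k-k}y}$, and then simply says "from the branching rule we obtain the relation." Your induction on $m$, with the $\rho$-dependent denominators canceling against the base-case factors and the leftover trailing-zero factors accounting for the passage from $\mathscr{E}$ to $\widehat{\mathscr{E}}$, is exactly the omitted bookkeeping made explicit.
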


Consequently, all of these variations are related by a product involving factors $(1 - a_k y_j)$.

\begin{remark}
The above definition for $\widehat{s}_{\lambda/\mu}(\xx|\aaa)$ is not the only natural way to define a skew version of the dual Schur functions.
There are alternatives that have better branching properties, such as being adjoint to multiplication of the factorial Schur functions.
It would be interesting to compare such a formulation to our skew edge Schur functions.
\end{remark}

For all of these variations, we note that they are all bases since they all are of the form $s_{\lambda} + HOT$.
It would be interesting to see what other properties these different bases have, such as positive (Schur) expansions or products decomposing into a finite sum.

We note that our vertex model satisfies the \defn{free fermion condition}, where the weights $\mathsf{a}_1 \mathsf{a}_2 + \mathsf{b}_1 \mathsf{b}_2 = \mathsf{c}_1 \mathsf{c}_2$, where $\mathsf{a}_2$ is (the weight of) the vertex with all boundaries $1$.
This does not naturally fit into the formalism defined by Hardt~\cite{Hardt21} since we have the column dependency.
However, if we set $\aaa = \alpha$ with $a_i = 0$ for $i \gg 1$, then, in the notation of~\cite[Fig.~2]{Hardt21} (written here in sans serif font), we can write
\begin{equation}
\label{eq:hardt_values}
\textsf{A}_i^{-1} = 1 + \alpha y_i,
\qquad\qquad
\textsf{B}_i^{-1} = A_i,
\qquad\qquad
\textsf{y}_i = y_i A_i,
\qquad\qquad
\textsf{x}_i = 0,
\end{equation}
after interchanging the Boltzmann weights of $\mathsf{c}_1 \leftrightarrow \mathsf{c}_2$.
(This change of Boltzmann weights does not affect the partition function; it only trivially changes how we identify boxes with the states.)
Then we can use~\cite[Cor.~5.2]{Hardt21} to obtain a Jacobi--Trudi type formula and~\cite[Prop.~5.7]{Hardt21} for a Pieri rule for this unrefined version.
This also allows us to express
\[
\eschur^{\lambda/\mu}(\yy_m|\alpha) = \prod_{j=1}^m (1 + \alpha y_j)^{N-n} s_{\lambda}[\yy_m / (1 + \alpha \yy_m)],
\]
where $n$ is the number of parts of $\lambda$ including trailing zeros, $N$ is the number of columns in the grid (which is necessarily at least $n$), and $s_{\lambda}[\yy_m / (1 + \alpha \yy_m)]$ denotes the substitution $y_j \mapsto y_j / (1 + \alpha y_j)$.
We note that this does not make sense for an infinite grid.
For the general case of $\aaa$, we expect the same results to hold for a refined version of the current operators defined as
\[
J_k(x) = \sum_{i \in \ZZ} s_i(x) s_{i-1}(x) \cdots s_{i-k+1}(x) \normord{\psi_{i-k}^* \psi_i}
\]
with $s_i(x) = a_i x$.
Note that $J_k(1)$ is the usual current operators and specializing $\aaa = \alpha$, we have $J_k(x) = (\alpha x)^k J_k(1)$ used to construct the (half) vertex operators.

Let us briefly discuss the functions $s_{\lambda}[\yy / (1 - \alpha \yy)]$, which was shown by Yeliussizov to be equal to the specialization $G_{\lambda}(\yy; \alpha,-\alpha)$ of a canonical Grothendieck polynomial~\cite[Cor.~3.5]{Yel17}.
We can see the vertex model~\eqref{eq:substitution_model} is the same substitution of the vertex model for Schur functions.
We note that under this specialization, the dual Schur functions satisfy the branching rule~\eqref{eq:branching_rule}.
Therefore, we have the following.
As far as the authors are aware has not appeared previously in the literature, but it can be easily seen directly from the combinatorial definition and is likely known to experts.

\begin{cor}
We have
\[
\widehat{s}_{\lambda}(\yy_m|\alpha) = s_{\lambda}[\yy_m / (1 - \alpha \yy_m)].
\]
\end{cor}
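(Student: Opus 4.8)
The plan is to establish the identity by induction on the number $m$ of variables $y_1, \dotsc, y_m$, using the branching rule for the dual Schur functions $\widehat{s}_{\lambda/\mu}$ recalled above together with the observation that specializing $\aaa$ to the constant sequence $\alpha$ collapses every correction factor in that rule to $1$. Throughout I would take $a_i = \alpha$ for all $i \in \ZZ$; this is harmless since, for a fixed shape, $\widehat{s}_{\lambda}(\yy_m|\aaa)$ depends on only finitely many of the $a_i$. I would also work with the skew functions $\widehat{s}_{\lambda/\mu}$ from the outset, proving the slightly stronger statement $\widehat{s}_{\lambda/\mu}(\yy_m|\alpha) = s_{\lambda/\mu}[\yy_m/(1-\alpha\yy_m)]$, since the inductive step naturally produces skew shapes; the corollary is then the case $\mu = \emptyset$.

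For the base case $m = 1$, the one-variable formula gives $\widehat{s}_{\lambda/\mu}(y_1|\alpha) = \prod_{\bbb \in \lambda/\mu} \frac{y_1}{1 - \alpha y_1} = \bigl( y_1/(1-\alpha y_1) \bigr)^{\abs{\lambda/\mu}}$ when $\lambda/\mu$ is a horizontal strip and $0$ otherwise (the horizontal-strip condition being built into the branching conventions for $\widehat{s}$), while $s_{\lambda/\mu}(z_1) = z_1^{\abs{\lambda/\mu}}$ for a horizontal strip and $0$ otherwise, so after the substitution $z_1 = y_1/(1-\alpha y_1)$ the two agree. For the inductive step, under $\aaa = \alpha$ the correction factor in the $\widehat{s}$ branching rule becomes $\prod_{\bbb \in \nu/\mu} \frac{1-\alpha y'}{1-\alpha y'} = 1$, so the rule reduces to the ordinary skew branching rule $\widehat{s}_{\lambda/\mu}(y_1, \dotsc, y_m|\alpha) = \sum_{\nu} \widehat{s}_{\nu/\mu}(y_2, \dotsc, y_m|\alpha)\, \widehat{s}_{\lambda/\nu}(y_1|\alpha)$, summed over $\mu \subseteq \nu \subseteq \lambda$ with $\lambda/\nu$ a horizontal strip. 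Writing $z_j = y_j/(1-\alpha y_j)$, the inductive hypothesis applied to $\widehat{s}_{\nu/\mu}(y_2, \dotsc, y_m|\alpha)$ and the base case applied to $\widehat{s}_{\lambda/\nu}(y_1|\alpha)$ turn the right-hand side into $\sum_{\nu} s_{\nu/\mu}(z_2, \dotsc, z_m)\, s_{\lambda/\nu}(z_1)$, which is the usual skew Schur branching rule and therefore equals $s_{\lambda/\mu}(z_1, \dotsc, z_m) = s_{\lambda/\mu}[\yy_m/(1-\alpha\yy_m)]$.

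An alternative route, closer to the lattice-model discussion preceding the corollary, is to combine Corollary~\ref{cor:dual_schur_comparison} (which identifies $\widehat{s}_{\lambda}(\yy_m|\aaa)$ with $\widehat{\mathscr{E}}^{\lambda}(\yy_m|\aaa)$) with the free-fermion evaluation of $\eschur^{\lambda}(\yy_m|\alpha)$ obtained from Hardt's framework via~\eqref{eq:hardt_values}; after cancelling the $(1 \pm \alpha y_j)$ prefactors one again lands on $s_{\lambda}[\yy_m/(1-\alpha\yy_m)]$, with care that $\widehat{\mathscr{E}}$ is built from $\eschur^{\lambda}(\yy_m|{-\aaa})$ so that the sign of $\alpha$ comes out correctly. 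In either approach I expect the only genuinely delicate point to be the bookkeeping: confirming that the correction factor is identically $1$ for \emph{every} box of $\nu/\mu$ (equivalently, that the vertex model~\eqref{eq:substitution_model} at $\aaa = \alpha$ is literally the Schur model with $y_i \mapsto y_i/(1-\alpha y_i)$) and keeping the horizontal-strip constraints consistent between the two sides of the induction; everything else is routine.
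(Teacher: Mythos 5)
Your proof is correct and follows essentially the same route as the paper, which (in the discussion immediately preceding the corollary) observes that under $\aaa = \alpha$ the correction factor in the dual Schur branching rule is identically $1$ and the one-variable model~\eqref{eq:substitution_model} is the Schur model in the substituted variable $y/(1-\alpha y)$; your induction on $m$ is simply the fleshed-out version of that remark. The alternative route via Corollary~\ref{cor:dual_schur_comparison} and the free-fermion evaluation is consistent with the surrounding discussion but is not needed.
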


We can also introduce an extra parameter from~\cite{Hardt21} to get a six-vertex model.
If we take the extra parameter to be $\mathsf{x}_i = t y_j$, which reduces to our edge labeled tableaux case when $t = 0$, then we expect to obtain a generalization of Tokuyama's theorem for spherical Whittaker functions (see, \textit{e.g.},~\cite{BS18}).
Furthermore, we expect this to hold for the refined version (\textit{i.e.}, not specializing $\aaa$) and satisfy a Cauchy-type identity with the functions in~\cite{BMN14}.

One additional combinatorial question would be to consider what happens when we also allow edges to appear on the vertical edges that satisfies the natural semistandard conditions.
If we do not allow an $i$ to lie between two boxes with an $i$, then we can get a simple interpretation by modifying our lattice model.
This condition is equivalent that we instead have a strict inequality for the entries to the right of a set.
We can write this as a product $\prod_{i,j} (1 + a_j y_i) s_{\lambda}[\yy/(1+\aaa\yy)]$, where the weight of each box $\bbb \in T$ is instead $y_{\bbb} / (1 + a_{c(\bbb)} y_{\bbb})$.
We note that this is just a slightly modified version of our $L$-matrix~\eqref{eq:dual_schur_vertices} with $\mathsf{a}_1 = \mathsf{b}_1 = \mathsf{c}_1 = 1 + a_j y_i$.
There are also a variation that only permits the labels to appear on actual edges by multiplying by appropriate factors of $(1 + a_j y_i)^{-1}$.
This is the same as~\eqref{eq:hardt_values} but with $\mathsf{B}_i^{-1} = 1$ as this modified $L$-matrix still satisfies the free fermion condition.
As a consequence, we have a Jacobi--Trudi formula, Pieri rule, Cauchy identity, etc.\ when $\aaa = \alpha$.
We expect to be able to obtain a refined version of this Cauchy identity using a similar proof to Theorem~\ref{thm:cauchy}.

Back-stable Schubert calculus was recently extended to $K$-theory in~\cite{LLS21II}.
It would be interesting to see how our approach extends to the $K$-theory setting.


\section{Crystallization}
\label{sec:crystal_structure}

We now give a crystal structure on the set of edge labeled tableaux of shape $\lambda/\mu$.
We use this to give a Schur function expansion of $\eschur^{\lambda}(\xx | \aaa)$, which gives an alternative proof that they are symmetric functions (in an appropriate completion since they are unbounded in degree as long as $\aaa$ is infinite).
We also derive an alternative combinatorial formula using an analog of uncrowding for set-valued tableaux~\cite[Sec.~6]{Buch02}.

\subsection{Crystal preliminaries}

Let $\fsl_n$ denote the special linear Lie algebra of traceless $n \times n$ matrices (over $\CC$) with indexing set $I = [n-1]$ and coweight lattice $P^{\vee} = \ZZ^n = \operatorname{span}_{\ZZ}\{\epsilon^{\vee}_1, \dotsc, \epsilon^{\vee}_n\}$ with simple coroots $\{\coroot_i = \epsilon^{\vee}_i - \epsilon^{\vee}_{i+1} \}_{i \in I}$.
We can take the dual space $P$ with dual basis $\{\epsilon_1, \dotsc, \epsilon_n\}$, so $\epsilon^{\vee}_i(\epsilon_j) = \delta_{ij}$.
Define the simple roots $\{\alpha_i = \epsilon_i - \epsilon_{i+1}\}_{i \in I}$, fundamental weights $\{\fw_i = \epsilon_1 + \cdots + \epsilon_i \}_{i \in I}$, and an inner product $\inner{\alpha_i}{\coroot_j} = \alpha_i(\coroot_j) = C_{ij}$.
This matrix is the Cartan matrix
\[
[C_{ij}]_{i,j \in I} = \begin{bmatrix}
2 & -1 & 0 & \cdots & 0 \\
-1 & 2 & -1 & \ddots & 0 \\
\ddots & \ddots & \ddots & \ddots & \ddots \\
0 & \ddots & -1 & 2 & -1 \\
0 & \cdots & 0 & -1 & 2
\end{bmatrix}.
\]
We use the standard identification of partitions of length at most $n$ with elements in the dominant weight lattice $P^+ = \{ \mu \in P \mid \inner{\mu}{\coroot_i} \geq 0 \text{ for all } i \in I \}$ by $\lambda \leftrightarrow \sum_{i=1}^n \lambda_i \epsilon_i$.

An \defn{crystal} is a set $\mcB$ with \defn{crystal operators} $e_i, f_i \colon \mcB \to \mcB \sqcup \{ \zero \}$, for $i \in I$, such that for all $i \in I$, the functions
\[
\varepsilon_i(b) := \max \{k \mid e_i^k b \neq \zero\},
\qquad\qquad
\varphi_i(b) := \max \{k \mid f_i^k b \neq \zero\},
\qquad\qquad
\wt \colon B \to P,
\]
and all $b,b' \in \mcB$, the relations
\[
e_i b = b' \quad \Longleftrightarrow \quad b = f_i b',
\qquad\qquad
\inner{\wt(b)}{\coroot_i} + \varepsilon_i(b) = \varphi_i(b)
\]
hold and $\mcB$ corresponds to the crystal basis as defined by Kashiwara~\cite{K90,K91} of a Drinfel'd--Jimbo quantum group $U_q(\g)$ module.
Our definition is what is called a regular or seminormal crystal in the literature.
We refer the reader to~\cite{BS17} for additional information on crystals.
We call an element $b \in \mcB$ \defn{highest weight} if $e_i b = \zero$ for all $i \in I$.

For any $\lambda \in P^+$, there exists a unique crystal $B(\lambda)$ with a unique highest weight element $u_{\lambda}$ of weight $\lambda$ corresponding to the highest weight irreducible representation $V(\lambda)$~\cite{K90,K91}.
The \defn{character} of a crystal $\mcB$ is defined as
\[
\ch \mcB = \sum_{b \in \mcB} \prod_{i=1}^n x_i^{\wt(b)(\epsilon^{\vee}_i)}.
\]
It is a classical fact that $\ch B(\lambda) = s_{\lambda}(\xx_n)$.

We can construct the tensor product of crystals $\mcB_1, \dotsc, \mcB_L$ as follows.
Let $\mcB = \mcB_L \otimes \cdots \otimes \mcB_1$ be the set $\mcB_L \times \cdots \times \mcB_1$.
We define the crystal operators using the \defn{signature rule}.
Let $b = b_L \otimes \cdots \otimes b_2 \otimes b_1 \in \mcB$, and for $i \in I$, we write
\[
\underbrace{\cm\cdots\cm}_{\varphi_i(b_L)}\
\underbrace{\cp\cdots\cp}_{\varepsilon_i(b_L)}\
\cdots\
\underbrace{\cm\cdots\cm}_{\varphi_i(b_1)}\
\underbrace{\cp\cdots\cp}_{\varepsilon_i(b_1)}\ .
\]
Then by successively deleting any $(\cp\cm)$-pairs (in that order) in the above sequence, we obtain a sequence
\[
\sig_i(b) :=
\underbrace{\cm\cdots\cm}_{\varphi_i(b)}\
\underbrace{\cp\cdots\cp}_{\varepsilon_i(b)}
\]
called the \defn{reduced signature}.
Suppose $1 \leq j_{\cm}, j_{\cp} \leq L$ are such that $b_{j_{\cm}}$ contributes the rightmost $\cm$ in $\sig_i(b)$ and $b_{j_{\cp}}$ contributes the leftmost $\cp$ in $\sig_i(b)$.
Then, we have
\begin{align*}
e_i b &= b_L \otimes \cdots \otimes b_{j_{\cp}+1} \otimes e_ib_{j_{\cp}} \otimes b_{j_{\cp}-1} \otimes \cdots \otimes b_1, \\
f_i b &= b_L \otimes \cdots \otimes b_{j_{\cm}+1} \otimes f_ib_{j_{\cm}} \otimes b_{j_{\cm}-1} \otimes \cdots \otimes b_1.
\end{align*}
If one of the factors in a tensor product is $\zero$, then we consider the entire element to be $\zero$.
For type A, the highest weight condition is the classical Yamanouchi condition (see, \textit{e.g.},~\cite{ECII}).

\begin{remark}
Our tensor product convention follows~\cite{BS17}, which is opposite of the tensor product rule used by Kashiwara~\cite{K90,K91} (and that of~\cite{HK02}).
\end{remark}

For two crystals $\mcB_1$ and $\mcB_2$, a \defn{crystal morphism} $\psi \colon \mcB_1 \to \mcB_2$ is a map $\mcB_1 \sqcup \{\zero\} \to \mcB_2 \sqcup \{\zero\}$ with $\psi(\zero) = \zero$ such that the following properties hold for all $b \in B_1$ and $i \in I$:
\begin{itemize}
\item[(1)] If $\psi(b) \in \mcB_2$, then $\wt\bigl(\psi(b)\bigr) = \wt(b)$, $\varepsilon_i\bigl(\psi(b)\bigr) = \varepsilon_i(b)$, and $\varphi_i\bigl(\psi(b)\bigr) = \varphi_i(b)$.
\item[(2)] We have $\psi(e_i b) = e_i \psi(b)$ if $\psi(e_i b) \neq \zero$ and $e_i \psi(b) \neq \zero$.
\item[(3)] We have $\psi(f_i b) = f_i \psi(b)$ if $\psi(f_i b) \neq \zero$ and $f_i \psi(b) \neq \zero$.
\end{itemize}
An \defn{embedding} (resp.~\defn{isomorphism}) is a crystal morphism such that the induced map $B_1 \sqcup \{\zero\} \to B_2 \sqcup \{\zero\}$ is an embedding (resp.~bijection).

\begin{figure}
\[
\ytableausetup{boxsize=1.5em}
\begin{tikzpicture}[xscale=1.9,baseline=-4]
\node (1) at (0,0) {$\ytableaushort{1}$};
\node (2) at (1.5,0) {$\ytableaushort{2}$};
\node (3) at (3,0) {$\ytableaushort{3}$};
\node (d) at (4.5,0) {$\cdots$};
\node (n) at (6,0) {$\ytableaushort{n}$};
\draw[->,darkred] (1) to node[above]{\tiny$1$} (2);
\draw[->,blue] (2) to node[above]{\tiny$2$} (3);
\draw[->,dgreencolor] (3) to node[above]{\tiny$3$} (d);
\draw[->,UQpurple] (d) to node[above]{\tiny$n-1$} (n);
\end{tikzpicture}
\]
\caption{Crystal $B(\fw_1)$ of the natural representation of type $A_{n-1}$.}
\label{fig:vec_repr}
\end{figure}
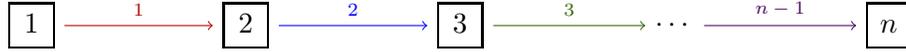

Next, we describe the crystal structure on semistandard tableaux of shape $\lambda / \mu$ (with entries in $[n]$), which we denote by $B(\lambda / \mu)$.
However, we will use a nonstandard reading word, but reading along diagonals from bottom-to-top.
Indeed, we read along a fixed diagonal from bottom-to-top, and we read along each diagonal from bottom-to-top.
This gives us an injection $\word \colon B(\lambda / \mu) \to B(\fw_1)^{\otimes\abs{\lambda / \mu}}$.

\begin{ex}
Under the reading word described above, we have
\[
\ytableausetup{boxsize=1.5em}
\ytableaushort{1{\color{blue}2}{\color{darkred}3},{\color{UQpurple}4}5{\color{blue}6},{\color{dgreencolor}7}{\color{UQpurple}8}9,{\color{UQgold}10}}
\longmapsto
\ytableaushort{{\color{UQgold}10}} \otimes \ytableaushort{{\color{dgreencolor}7}} \otimes \ytableaushort{{\color{UQpurple}8}} \otimes \ytableaushort{{\color{UQpurple}4}} \otimes \ytableaushort{9} \otimes \ytableaushort{5} \otimes \ytableaushort{1} \otimes \ytableaushort{{\color{blue}6}} \otimes \ytableaushort{{\color{blue}2}} \otimes \ytableaushort{{\color{darkred}3}}\,.
\]
If we group this by diagonals, we obtain the equivalent crystal
\[
\ytableaushort{{\color{UQgold}10}}
\otimes \ytableaushort{{\color{dgreencolor}7}}
\otimes \ytableaushort{{\color{UQpurple}4}{\color{UQpurple}8}}
\otimes \ytableaushort{159}
\otimes \ytableaushort{{\color{blue}2}{\color{blue}6}}
\otimes \ytableaushort{{\color{darkred}3}}\,.
\]
\end{ex}

We say a reading word is \defn{admissible} if for any fixed box $\bbb$, we read every box to its northeast after $\bbb$.
It is straightforward to see that $\word$ defines an admissible reading, and so we have the following by~\cite[Thm.~7.3.6]{HK02}.

\begin{thm}
The set $\ssyt(\lambda / \mu)$ is closed under the crystal operators induced from $\word$.
Moreover, this is the same crystal structure on $\ssyt(\lambda/\mu)$ as the one using the Far-Eastern reading word.
\end{thm}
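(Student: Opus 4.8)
The plan is to derive both claims from the general theory of reading words for tableau crystals, via \cite[Thm.~7.3.6]{HK02}. That theorem says: if $\rho$ is an \emph{admissible} reading of the boxes of $\lambda/\mu$ — every box strictly northeast of a box $\bbb$ is read after $\bbb$ — then the operators obtained by transporting the tensor-product operators on $B(\fw_1)^{\otimes\abs{\lambda/\mu}}$ back along the embedding $\rho\colon\ssyt(\lambda/\mu)\hookrightarrow B(\fw_1)^{\otimes\abs{\lambda/\mu}}$ preserve $\ssyt(\lambda/\mu)$, and the resulting crystal does not depend on which admissible $\rho$ is used. The Far-Eastern reading is admissible, so both assertions of the theorem reduce to the single claim that $\word$ is admissible.

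I would check this by a content computation. Put $\bbb$ in row $i$, column $j$, and suppose $\bbb'$ in row $i'$, column $j'$ lies strictly to the northeast of $\bbb$; in English convention this means $i'\le i$, $j'\ge j$, and $(i',j')\ne(i,j)$. Then $c(\bbb')=j'-i'\ge j-i=c(\bbb)$, and equality would give $j'-j=i'-i$ with $j'-j\ge 0$ and $i'-i\le 0$, forcing $i'=i$ and $j'=j$, a contradiction; hence $c(\bbb')>c(\bbb)$. Since $\word$ lists the diagonals in order of increasing content (reading each one from bottom to top), $\bbb'$ lies on a strictly later diagonal and is read strictly after $\bbb$. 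Thus $\word$ is admissible, \cite[Thm.~7.3.6]{HK02} applies, and we are done.

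The only genuine work is bookkeeping with conventions, which is where I would be most careful: our diagrams use English notation, our tensor-product rule follows \cite{BS17} and is the reverse of Kashiwara's and of that in \cite{HK02} (see the Remark above), and the Far-Eastern reading word must be oriented so that ``all northeast boxes come later'' becomes exactly the hypothesis of \cite[Thm.~7.3.6]{HK02}. The mismatch in tensor conventions is absorbed by reading the word in the appropriate order, so once one coherent set of conventions is fixed these all match up with no further computation; I would also note the routine reduction from skew to straight shapes in case \cite[Thm.~7.3.6]{HK02} is stated only in the latter generality. Everything beyond this convention-matching is the one-line content argument above.
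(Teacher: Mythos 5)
Your proposal is correct and takes essentially the same route as the paper: the paper justifies this theorem precisely by observing that $\word$ is an admissible reading (every box to the northeast has strictly larger content, hence lies on a strictly later diagonal and is read later) and then invoking \cite[Thm.~7.3.6]{HK02}. Your content computation just makes explicit the step the paper labels ``straightforward,'' so there is nothing to add beyond the convention bookkeeping you already flag.
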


We note that if we take the reverse Far-Eastern reading word, an semistandard tableau has the lattice property if and only if it is a highest weight element.
This is a direct translation of the signature rule.

\subsection{Crystal structure}

We define our crystal structure on edge labeled tableaux by extending the reading for a box $\bbb$ with an entry $b$ and a set $A = \{a_1 < \cdots < a_k\}$ on the edge below $\bbb$.
For each such box, we read this as
\[
\ytableausetup{boxsize=1.5em}
\ytableaushort{{\elb{b}{A}}}
\quad \Longleftrightarrow \quad
\ytableaushort{{a_k}} \otimes \cdots \otimes \ytableaushort{{a_1}} \otimes \ytableaushort{b}
\quad \Longleftrightarrow \quad
\ytableaushort{b,{a_1},{\raisebox{-2pt}{$\vdots$}},{a_k}}
\]
We read the tableau following the reading word $\word$ using this description for each box, including for a box ``above'' the top boundary.

\begin{ex}
Consider the edge labeled tableau $T$ from Example~\ref{ex:skew_ELT}.
Then the reading word is
\[
\newcommand{\bl}[1]{{\color{OCUenji} \textbf{#1}}}
\bl{5} \, 6\bl{5} 42\bl{1} \, 3\bl{2} \, \bl{5} 1 \, 63 \, 52\bl{1},
\]
where the bold letters are the entries in each box.

\[
\ytableausetup{boxsize=1.5em}
T = \ytableaushort{{\cdot}{\elb{\cdot}{1}}{\cdot}{\elb{\cdot}{36}}{\elb{1}{25}},{\elb{1}{24}}{\elb{2}{3}}5,5{\elb{5}{6}}}\,,
\]
\end{ex}

We define the crystal structure by using the signature rule with this reading word.
This generally gives a valid edge labeled tableau with the following exception:
\[
\ytableausetup{boxsize=1.5em}
\ytableaushort{i{\elb{i}{p}}}
\quad
\xrightarrow[\hspace{30pt}]{i}
\quad
\ytableaushort{{\elbb{i}{p}}{p}}\,,
\]
where $p = i+1$.
Indeed, we can characterize this as occurring for $f_i$ when we are acting on an entry $i$ in a box $\bbb$ such that the entry immediately to its right is also $i$.
For $e_i$, we are acting on an entry $i+1$ in a box $\bbb$ such that there the edge immediately above it contains an $i$, and we perform the inverse operation.

\begin{prop}
\label{prop:reading_word_commutes}
We have $\word(f_i T) = f_i \word(T)$.
\end{prop}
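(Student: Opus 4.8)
The plan is to reduce everything to a local computation at the site where $f_i$ acts. By construction, the word $f_i\word(T)$ is obtained from $\word(T)$ by the signature rule, which changes a single letter from $i$ to $i+1$; let $\bbb$ denote the box (or edge) carrying that letter. If the resulting assignment of values is again the reading word of an edge labeled tableau of shape $\lambda/\mu$ --- necessarily unique, since $\word$ is injective on a fixed shape --- then by definition that tableau \emph{is} $f_i T$, whence $\word(f_i T)=f_i\word(T)$ with nothing to prove. So I only need to treat the exceptional case, which as recalled above occurs exactly when $f_i$ acts on the entry $i$ of a box $\bbb=(r,c)$ whose right neighbour $\bbb'=(r,c+1)$ also has entry $i$; write $p=i+1$ and $d=c-r$.

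First I would pin down the shape of this configuration. The lower-edge labels of $\bbb'$ are all strictly greater than its entry $i$, hence $\geq p$; in the block of $\bbb'$ they are listed decreasingly and are immediately followed by the entry $i$. If $p$ were not among them, the $\cm$ contributed by that entry would, in the reduced signature, have to be cancelled by a $\cp$ lying strictly between it and the $\cm$ of $\bbb$'s own entry, which (using admissibility of $\word$) is incompatible with $\bbb$'s entry contributing the rightmost unmatched $\cm$. Hence $p$ is the minimum of the lower-edge label set of $\bbb'$, so $p$ indeed sits on the lower edge of $\bbb'$; the same bracketing argument shows $\bbb$ carries no label $p$ on its own lower edge, since such a label would cancel the $\cm$ of $\bbb$'s entry inside $\bbb$'s block. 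These two facts are precisely what makes the move depicted above well-defined: promoting the entries of $\bbb$ and $\bbb'$ to $p$, deleting $p$ from the lower edge of $\bbb'$, and adjoining $i$ to the upper edge of $\bbb$ is routinely checked to respect all row, column and edge inequalities, so $f_iT$ is a genuine element of $\elt(\lambda/\mu)$.

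The heart of the argument is the letter-by-letter comparison of $\word(f_iT)$ with $f_i\word(T)$. The correction alters $\word$ in three places: (i) the entry of $\bbb$, which is the last letter of $\bbb$'s block, changes from $i$ to $p$; (ii) the trailing $i$ at the end of $\bbb'$'s block disappears (the old minimal label $p$ becomes the new entry $p$, occupying the same slot); and (iii) an $i$ is prepended to the block of the box or cell $\bbb^{\uparrow}=(r-1,c)$ directly above $\bbb$, since the new label $i$ on the upper edge of $\bbb$ is the largest label on the lower edge of $\bbb^{\uparrow}$ (all others being $<i$) and so comes first in the decreasing listing. The key observation is that $\bbb'$ and $\bbb^{\uparrow}$ both lie on the diagonal of content $d+1$, at rows $r$ and $r-1$ respectively, with no cell of that diagonal between them; since $\word$ reads each diagonal from bottom to top, the block of $\bbb^{\uparrow}$ immediately follows the block of $\bbb'$ in $\word(T)$. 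Hence changes (ii) and (iii) cancel: the letter $i$ is neither created nor destroyed, only re-attributed from the tail of $\bbb'$'s block to the head of $\bbb^{\uparrow}$'s block, at the same position in the word. What survives is (i) alone, the single replacement $i\mapsto p$ at the entry of $\bbb$, which is exactly the letter modified by $f_i$ on $\word(T)$. Therefore $\word(f_iT)=f_i\word(T)$.

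I expect the main obstacle to be the bookkeeping of the second paragraph: making fully rigorous the claim that the only way the naive signature-rule move can fail, and the reason the exceptional configuration must look like the picture (in particular that $\bbb'$ carries $p$ on its lower edge while $\bbb$ does not). Once that classification is in hand, the reading-word comparison in the third paragraph is short and essentially formal, and the corresponding statement for $e_i$ follows by applying the same analysis to the inverse move.
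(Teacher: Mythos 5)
Your overall strategy is the same as the paper's: reduce to the exceptional configuration and compare reading words letter by letter. In fact your third paragraph is more complete than the paper's own two-sentence proof --- the observation that $\bbb'=(r,c+1)$ and $\bbb^{\uparrow}=(r-1,c)$ are consecutive on the diagonal of content $d+1$, so that the trailing $i$ deleted from the block of $\bbb'$ and the $i$ prepended to the block of $\bbb^{\uparrow}$ occupy the same slot of the word, is exactly the point the paper leaves implicit, and you argue it correctly.

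There are, however, two gaps in your second paragraph. First, the deduction that $p$ must lie on the lower edge of $\bbb'$ does not follow from what you write: a $\cp$ lying strictly between the two $\cm$'s and cancelling the later one is perfectly compatible with the earlier $\cm$ being the rightmost unmatched sign (the pattern $\cm\,\cp\,\cm$ reduces to a single leading $\cm$), so there is no ``incompatibility'' at this stage. To close the argument you must classify the possible sources of such a $\cp$: the only letters equal to $i+1$ read strictly between the entry of $\bbb$ and the entry of $\bbb'$ are the label $p$ on the lower edge of $\bbb'$ and the entry of the box $(r+1,c+2)$. In the latter case semistandardness forces the entry of $(r+1,c+1)$ to be $i+1$ as well; its $\cp$ then immediately precedes the $\cm$ of $\bbb$'s entry (the lower edge of $\bbb$ being empty, since its labels would have to lie strictly between $i$ and $i+1$) and cancels it, contradicting the hypothesis that $f_i$ acts on $\bbb$. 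Only after ruling out this case may you conclude $p$ sits on the lower edge of $\bbb'$.

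Second, and more seriously, the claim that the two-box move ``respects all row, column and edge inequalities'' is false when the entry $i$ occurs three or more times consecutively in row $r$. Take $\lambda=(3)$ with entries $1,1,1$ and lower-edge label sets $\emptyset,\{2\},\{2\}$: then $\word(T)=(1,2,1,2,1)$, the operator $f_1$ acts on the entry of $(1,1)$, and the depicted move produces the row $2,2,1$, which is not semistandard. The same forcing argument shows that every box of the maximal run of $i$'s carries $p$ on its lower edge, and the correction must propagate along the entire run (here the correct answer is the row $2,2,2$ with a $1$ added to the upper edges of the first two boxes); your paragraph-three cancellation then has to be performed at every junction of consecutive diagonal blocks along the run, not just at the single junction between $\bbb'$ and $\bbb^{\uparrow}$. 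This defect is arguably already present in the paper's description of the exceptional move, but your proof asserts validity of the two-box move explicitly, so it needs this repair before the word comparison establishes the proposition in general.
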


\begin{proof}
We note that the $i$ and $i+1$ form a canceling pair, and there is no entry in between these in the reading word.
Thus, we act on the $i$ from the left box, and change it into an $i+1$.
\end{proof}

\begin{thm}
\label{thm:edge_labeled_crystal}
This gives a crystal structure isomorphic to a direct sum of highest weight crystals.
\end{thm}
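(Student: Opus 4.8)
The plan is to use the diagonal reading word $\word$ to embed $\elt_n(\lambda/\mu)$ into $\bigsqcup_{N\ge 0}B(\fw_1)^{\otimes N}$ compatibly with all of the crystal data, and then read off the decomposition from the classical structure of $B(\fw_1)^{\otimes N}$. Concretely, I would equip $\elt_n(\lambda/\mu)$ with the weight $\wt(T)\in P$ given by $\sum_{m=1}^{n}c_m(T)\,\epsilon_m$, where $c_m(T)$ counts the boxes \emph{and} edge labels of $T$ equal to $m$, and with $\varepsilon_i,\varphi_i$ defined by the usual $\max$-formulas. Since each tensor factor of $\word(T)$ has weight $\epsilon_m$ for its letter $m$, the map $\word$ sends $\wt(T)$ to the tensor-product weight of $\word(T)$, and it preserves the total number $N$ of letters.

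The first and main task is to prove that $e_i$ and $f_i$ are well defined as maps $\elt_n(\lambda/\mu)\to\elt_n(\lambda/\mu)\sqcup\{\zero\}$. Given $T$ and $i$, the signature rule applied to $\word(T)$ either returns $\zero$ — in which case $f_iT:=\zero$ — or changes a single letter of $\word(T)$ from $i$ to $i+1$, namely the box entry or edge label whose signature contribution survives the cancellation; only the semistandard and edge inequalities immediately adjacent to that letter can fail. A short case analysis shows the reinterpreted word is again a legal edge labeled tableau except precisely in the exceptional configuration singled out in the text (an entry $i$ with an entry $i$ immediately to its right, for $f_i$), where the prescribed correction yields a valid tableau of the same shape. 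One then checks that this $f_i$-correction is exactly inverse to the corresponding $e_i$-correction, so that $e_i(f_iT)=T$ and $f_i(e_iT)=T$ whenever the left sides are nonzero (in the non-exceptional cases these identities are inherited from $B(\fw_1)^{\otimes N}$). This case analysis — isolating the exceptional configuration and verifying that the correction both restores legality and is reversible — is where the combinatorics is delicate and is the step I expect to require the most care. Note that the correction only exchanges a box entry $i$ for an edge label $i$ and back, so $e_i$ and $f_i$ preserve $N$.

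With well-definedness in hand, Proposition~\ref{prop:reading_word_commutes} and its mirror for $e_i$ (which follows from the invertibility just discussed) give $\word(f_iT)=f_i\word(T)$ and $\word(e_iT)=e_i\word(T)$, together with $f_iT=\zero\iff f_i\word(T)=\zero$ and likewise for $e_i$. Since $\word$ is injective — an edge labeled tableau is recovered from its reading word and shape, most transparently via the explicit diagonal-to-hook bijection indicated in the introduction, which turns the letters on each diagonal of $T$ into a semistandard tableau of an associated hook, is visibly reversible, and respects $\word$ — it identifies $\elt_n(\lambda/\mu)$ with a subset $\mathcal W\subseteq\bigsqcup_N B(\fw_1)^{\otimes N}$ closed under all $e_i,f_i$, on which $\wt,\varepsilon_i,\varphi_i$ agree with the ambient ones. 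Hence the crystal relations $e_iT=T'\iff T=f_iT'$ and $\langle\wt(T),\coroot_i\rangle+\varepsilon_i(T)=\varphi_i(T)$ hold, so $\elt_n(\lambda/\mu)$ is a seminormal crystal; and $\mathcal W$, being closed under the operators, is a disjoint union of connected components of $\bigsqcup_N B(\fw_1)^{\otimes N}$, each isomorphic to $B(\nu)$ for a partition $\nu$ with $\ell(\nu)\le n$ by the well-known decomposition of $B(\fw_1)^{\otimes N}$ (equivalently RSK and the Littlewood--Richardson rule). Transporting back along $\word$ gives $\elt_n(\lambda/\mu)\cong\bigoplus_\nu B(\nu)$, so it is the crystal basis of $\bigoplus_\nu V(\nu)$, hence a crystal in the present sense and a direct sum of highest weight crystals. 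The diagonal-to-hook bijection moreover determines the multiset of $\nu$ that occur, which gives the Schur expansion of $\eschur^{\lambda}$ in the subsequent results.
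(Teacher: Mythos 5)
Your overall strategy---transport the structure through the diagonal reading word, check that $\elt$ is closed under the (corrected) operators, and then quote the decomposition of the ambient tensor crystal---is the same as the paper's, which likewise rests on Proposition~\ref{prop:reading_word_commutes} plus closure; the one real difference is that you work in $\bigsqcup_N B(\fw_1)^{\otimes N}$ while the paper groups the letters of each diagonal into a column crystal $B(\fw_{k})$. That difference is not cosmetic, and it is where your argument has a genuine gap: the map $\word$ into $\bigsqcup_N B(\fw_1)^{\otimes N}$ is \emph{not} injective. Take the shape $(1)$ and $n\geq 2$: let $T_1$ be the box filled with $1$ carrying the label $2$ on its lower edge, and $T_2$ the box filled with $2$ carrying the label $1$ on its upper edge. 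Both are legal edge labeled tableaux (labels below the bottom box of a column and on the top boundary both occur in the paper's examples), and both have reading word $2\otimes 1$, because a lower-edge label is read immediately before its box on the same diagonal, whereas an upper-edge (top-boundary) label is read with the diagonal one step to the right. They even have the same $\fsl_n$-weight, differing only in the $\aaa$-grading, so no crystal datum separates their words. Consequently you cannot ``identify $\elt_n(\lambda/\mu)$ with a subset $\mathcal{W}$'' of the ambient crystal, and the theorem does not follow from closure alone: a strict morphism from a connected seminormal crystal \emph{onto} a component $B(\nu)$ need not be injective or an isomorphism (folded covers satisfy all the pulled-back $\wt$, $\varepsilon_i$, $\varphi_i$ conditions), so injectivity---or a substitute such as verifying local axioms on $\elt$ itself---is genuinely load-bearing at exactly the step where you conclude that $\mathcal{W}$ is a union of connected components.

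The repair is precisely the paper's bookkeeping: remember, in addition to the word, how many letters lie on each diagonal. The operators preserve this per-diagonal profile---in the non-exceptional case the letter changes in place, and the exceptional correction only rearranges which letters are box entries and which are edge labels while changing a single letter of the word and leaving the count on every diagonal fixed---and with the profile recorded the two tableaux above are distinguished (profiles $(2,0,\dotsc)$ versus $(1,1,0,\dotsc)$). Each diagonal is then a strictly increasing chain, i.e.\ an element of $B(\fw_{k_i})$, and one argues inside $\bigotimes_i B(\fw_{k_i})$ as the paper does. Your other main step, the case analysis showing that the signature-rule action stays inside $\elt$ except in the single exceptional configuration and that the correction is inverse to the corresponding $e_i$-move, is only sketched, but that is on a par with the paper's own level of detail; the substantive defect in your write-up is the false injectivity claim and the identification built on it.
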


\begin{proof}
The reading word gives an explicit bijection between a diagonal and $B(\fw_k)$, where $k$ is the number of entries along the diagonal.
Note that the semistandard condition implies that every element along a diagonal is distinct.
From Proposition~\ref{prop:reading_word_commutes}, we have that the edge labeled tableaux are a subcrystal of
\[
B = \bigotimes_{i=1}^m B(\fw_{k_i}) \iso \bigoplus_{i=1}^N B(\lambda^{(i)})^{\oplus m_i}.
\]
The claim follows from the fact edge labeled tableaux are closed under the crystal operators.
\end{proof}

\begin{cor}
For any $\lambda$, we have
\[
\eschur^{\lambda}(\xx|\aaa) = \sum_{\lambda \subseteq \nu} c_{\lambda}^{\nu}(\aaa) s_{\mu}(\xx),
\]
where $c_{\lambda}^{\nu}(\aaa) = \sum_T \wt(T)\rvert_{x_i = 1}$ summed over all highest weight edge labeled tableaux of shape $\lambda$ and weight (as a crystal element) $\mu$.
\end{cor}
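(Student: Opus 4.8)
The plan is to read the corollary off the crystal decomposition of $\elt_n(\lambda)$ provided by Theorem~\ref{thm:edge_labeled_crystal}, after separating the $\aaa$-dependence of the weight from the $\xx$-dependence. Write $\operatorname{cwt}(T)\in P$ for the weight of $T\in\elt_n(\lambda)$ as a crystal element. By construction of the reading word $\word$, each entry $\bbb$ and each edge label $\ell$ of $T$ is a single tensor factor lying in $B(\fw_1)$, so $\operatorname{cwt}(T)=\sum_{\bbb}\epsilon_{\bbb}+\sum_{\ell}\epsilon_{\ell}$ and therefore $\prod_{i=1}^n x_i^{\operatorname{cwt}(T)(\epsilon_i^{\vee})}=\prod_{\bbb}x_{\bbb}\prod_{\ell}x_{\ell}$. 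Comparing this with the definition of $\wt(T)$ yields the factorization
\[
\wt(T)=\Bigl(\,\prod_{i=1}^n x_i^{\operatorname{cwt}(T)(\epsilon_i^{\vee})}\Bigr)\,m_{\aaa}(T),
\qquad
m_{\aaa}(T):=\wt(T)\big|_{x_1=\cdots=x_n=1}=\prod_{\bbb}\prod_{\ell\in A_{\bbb}}a_{c(\bbb)},
\]
so the $\aaa$-part of $\wt(T)$ is the monomial $m_{\aaa}(T)$ alone.

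The crux is to show that $m_{\aaa}$ is a crystal invariant, i.e.\ $m_{\aaa}(f_iT)=m_{\aaa}(T)$ whenever $f_iT\neq\zero$ (and similarly for $e_i$). When $f_i$ acts non-exceptionally this is immediate: a single entry changes value and no edge label is created, destroyed, or relocated, so $m_{\aaa}$ is untouched. In the exceptional case (the one correcting non-semistandardness, described before Proposition~\ref{prop:reading_word_commutes}) an edge label is moved — from the lower edge of a box $\bbb$ to the upper edge of the box directly left of $\bbb$, its value changing from $i+1$ to $i$ — but a short content computation shows that in both configurations this label sits on an edge whose associated content is $c(\bbb)$, so it contributes the same factor $a_{c(\bbb)}$ to $m_{\aaa}$ before and after. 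Hence $m_{\aaa}$ is constant on each connected component of $\elt_n(\lambda)$, and I expect this content check around the exceptional operator to be the only non-routine point of the proof.

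With the invariance in hand, decompose $\elt_n(\lambda)=\bigsqcup_C C$ into connected components using Theorem~\ref{thm:edge_labeled_crystal}; each $C$ is isomorphic to $B(\nu_C)$ for a unique $\nu_C$, and $\nu_C=\operatorname{cwt}(T^C)$ is a partition because the (unique) highest weight element $T^C\in C$ has $\varepsilon_i(T^C)=0$, hence $\inner{\operatorname{cwt}(T^C)}{\coroot_i}\geq 0$, for all $i$. Applying the weight factorization, the invariance of $m_{\aaa}$, and the classical fact $\ch B(\nu_C)=s_{\nu_C}(\xx)$,
\[
\sum_{T\in C}\wt(T)=m_{\aaa}(T^C)\sum_{T\in C}\prod_{i=1}^n x_i^{\operatorname{cwt}(T)(\epsilon_i^{\vee})}=m_{\aaa}(T^C)\,s_{\nu_C}(\xx).
\]
Summing over all components and grouping them according to the value of $\nu_C$ turns $\eschur^{\lambda}(\xx|\aaa)=\sum_{T\in\elt_n(\lambda)}\wt(T)$ into $\sum_{\nu}c_{\lambda}^{\nu}(\aaa)\,s_{\nu}(\xx)$ with $c_{\lambda}^{\nu}(\aaa)=\sum_{T}\wt(T)|_{x_i=1}$ summed over the highest weight $T\in\elt_n(\lambda)$ of crystal weight $\nu$, as claimed. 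It remains to check that only $\nu\supseteq\lambda$ occur; this can be done by fixing the total edge-label distribution, under which $\elt_n(\lambda)$ embeds into a tensor product $\bigotimes_d B(\fw_{k_d})$, and identifying which highest weight components are actually attained, or, more transparently, as a consequence of the uncrowding correspondence discussed below, which sends an edge labeled tableau of shape $\lambda$ to a semistandard tableau of a shape $\nu\supseteq\lambda$.
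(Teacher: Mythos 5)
Your proposal is correct and takes essentially the same route as the paper, which offers no separate argument for this corollary and treats it as an immediate consequence of Theorem~\ref{thm:edge_labeled_crystal} together with $\ch B(\nu) = s_{\nu}(\xx)$; your explicit factorization $\wt(T) = x^{\operatorname{cwt}(T)} m_{\aaa}(T)$ and the check that $m_{\aaa}$ is constant on connected components --- in particular across the exceptional operator, where the relocated edge label lands on an edge with the same associated content --- is exactly the point the paper leaves implicit. Two cosmetic remarks: that common content is $c(\bbb)-1$ (the content of the box to the left of $\bbb$), not $c(\bbb)$, which does not affect the equality you need; and the containment $\lambda \subseteq \nu$ is, just as in the paper, deferred to the uncrowding map of Theorem~\ref{thm:uncrowdingmap} rather than argued directly.
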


This gives an alternative proof of Theorem~\ref{thm:elambda-symmetric}.
Furthermore, from Corollary~\ref{cor:dual_schur_comparison} and noting that this product is equivalent to setting $a_i = 0$ for all $i > \lambda_1$, we see that $\widehat{s}_{\lambda}(\xx|{-\aaa})$ expands in the Schur functions with coefficients in $\ZZ_{\geq 0}[\aaa]$, which also follows from~\cite[Thm.~3.17]{Molev-dualschur}.

\begin{figure}
\iftikz
\begin{gather*}
\begin{tikzpicture}[xscale=3.4,yscale=1,>=latex]
\node (hw) at (0,0) {$\ytableaushort{111,2{\elb{2}{3}}}$};
\node (f1) at (1,1) {$\ytableaushort{112,2{\elb{2}{3}}}$};
\node (f21) at (2,1) {$\ytableaushort{113,2{\elb{2}{3}}}$};
\node (f221) at (3,1) {$\ytableaushort{{\elb{1}{2}}13,33}$};
\node (f2) at (1,-1) {$\ytableaushort{{\elb{1}{2}}11,33}$};
\node (f12) at (2,-1) {$\ytableaushort{{\elb{1}{2}}12,33}$};
\node (f112) at (3,-1) {$\ytableaushort{{\elb{1}{2}}22,33}$};
\node (lw) at (4,0) {$\ytableaushort{{\elb{1}{2}}23,33}$};
\draw[->,darkred] (hw) to node[above]{\tiny$1$} (f1);
\draw[->,darkred] (f2) to node[above]{\tiny$1$} (f12);
\draw[->,darkred] (f12) to node[above]{\tiny$1$} (f112);
\draw[->,darkred] (f221) to node[above]{\tiny$1$} (lw);
\draw[->,blue] (hw) to node[above]{\tiny$2$} (f2);
\draw[->,blue] (f1) to node[above]{\tiny$2$} (f21);
\draw[->,blue] (f21) to node[above]{\tiny$2$} (f221);
\draw[->,blue] (f112) to node[above]{\tiny$2$} (lw);
\end{tikzpicture}
\allowdisplaybreaks \\
\begin{tikzpicture}[xscale=3.4,yscale=1,>=latex]
\node (hw) at (0,0) {$\ytableaushort{111,2{\elbb{2}{3}}}$};
\node (f1) at (1,1) {$\ytableaushort{112,2{\elbb{2}{3}}}$};
\node (f21) at (2,1) {$\ytableaushort{113,2{\elbb{2}{3}}}$};
\node (f221) at (3,1) {$\ytableaushort{113,3{\elbb{2}{3}}}$};
\node (f2) at (1,-1) {$\ytableaushort{111,3{\elbb{2}{3}}}$};
\node (f12) at (2,-1) {$\ytableaushort{112,3{\elbb{2}{3}}}$};
\node (f112) at (3,-1) {$\ytableaushort{{\elbb{1}{2}}22,33}$};
\node (lw) at (4,0) {$\ytableaushort{{\elbb{1}{2}}23,33}$};
\draw[->,darkred] (hw) to node[above]{\tiny$1$} (f1);
\draw[->,darkred] (f2) to node[above]{\tiny$1$} (f12);
\draw[->,darkred] (f12) to node[above]{\tiny$1$} (f112);
\draw[->,darkred] (f221) to node[above]{\tiny$1$} (lw);
\draw[->,blue] (hw) to node[above]{\tiny$2$} (f2);
\draw[->,blue] (f1) to node[above]{\tiny$2$} (f21);
\draw[->,blue] (f21) to node[above]{\tiny$2$} (f221);
\draw[->,blue] (f112) to node[above]{\tiny$2$} (lw);
\end{tikzpicture}
\allowdisplaybreaks \\
\begin{tikzpicture}[xscale=3.4,yscale=1,>=latex]
\node (hw) at (0,0) {$\ytableaushort{11{\elb{1}{2}},23}$};
\node (f1) at (1,1) {$\ytableaushort{1{\elbb{1}{2}}2,23}$};
\node (f21) at (2,1) {$\ytableaushort{1{\elbb{1}{2}}3,23}$};
\node (f221) at (3,1) {$\ytableaushort{1{\elbb{1}{2}}3,33}$};
\node (f2) at (1,-1) {$\ytableaushort{11{\elb{1}{2}},33}$};
\node (f12) at (2,-1) {$\ytableaushort{1{\elbb{1}{2}}2,33}$};
\node (f112) at (3,-1) {$\ytableaushort{2{\elbb{1}{2}}2,33}$};
\node (lw) at (4,0) {$\ytableaushort{2{\elbb{1}{2}}3,33}$};
\draw[->,darkred] (hw) to node[above]{\tiny$1$} (f1);
\draw[->,darkred] (f2) to node[above]{\tiny$1$} (f12);
\draw[->,darkred] (f12) to node[above]{\tiny$1$} (f112);
\draw[->,darkred] (f221) to node[above]{\tiny$1$} (lw);
\draw[->,blue] (hw) to node[above]{\tiny$2$} (f2);
\draw[->,blue] (f1) to node[above]{\tiny$2$} (f21);
\draw[->,blue] (f21) to node[above]{\tiny$2$} (f221);
\draw[->,blue] (f112) to node[above]{\tiny$2$} (lw);
\end{tikzpicture}
\allowdisplaybreaks \\
\begin{tikzpicture}[xscale=2.47,yscale=1,>=latex,every node/.style={scale=0.8}]
\node (hw) at (0,-1) {$\ytableaushort{11{\elb{1}{2}},22}$};
\node (f2) at (1,0) {$\ytableaushort{11{\elb{1}{3}},22}$};
\node (f22) at (2,1) {$\ytableaushort{11{\elb{1}{3}},23}$};
\node (f222) at (3,2) {$\ytableaushort{11{\elb{1}{3}},33}$};
\node (f12) at (2,-1) {$\ytableaushort{11{\elb{2}{3}},22}$};
\node (f122) at (3,0) {$\ytableaushort{11{\elb{2}{3}},23}$};
\node (f1122) at (4,-1) {$\ytableaushort{12{\elb{2}{3}},33}$};
\node (f1222) at (4,1) {$\ytableaushort{11{\elb{2}{3}},33}$};
\node (f11222) at (5,0) {$\ytableaushort{12{\elb{2}{3}},33}$};
\node (f111222) at (6,-1) {$\ytableaushort{22{\elb{2}{3}},33}$};
\draw[->,darkred] (f2) to node[above,scale=1/.8]{\tiny$1$} (f12);
\draw[->,darkred] (f22) to node[above,scale=1/.8]{\tiny$1$} (f122);
\draw[->,darkred] (f122) to node[above,scale=1/.8]{\tiny$1$} (f1122);
\draw[->,darkred] (f222) to node[above,scale=1/.8]{\tiny$1$} (f1222);
\draw[->,darkred] (f1222) to node[above,scale=1/.8]{\tiny$1$} (f11222);
\draw[->,darkred] (f11222) to node[above,scale=1/.8]{\tiny$1$} (f111222);
\draw[->,blue] (hw) to node[above,scale=1/.8]{\tiny$2$} (f2);
\draw[->,blue] (f2) to node[above,scale=1/.8]{\tiny$2$} (f22);
\draw[->,blue] (f22) to node[above,scale=1/.8]{\tiny$2$} (f222);
\draw[->,blue] (f12) to node[above,scale=1/.8]{\tiny$2$} (f122);
\draw[->,blue] (f122) to node[above,scale=1/.8]{\tiny$2$} (f1222);
\draw[->,blue] (f1122) to node[above,scale=1/.8]{\tiny$2$} (f11222);
\end{tikzpicture}
\end{gather*}
\fi
\caption{Crystals isomorphic to $B(\fw_3 + \fw_2 + \fw_1)$ and $B(3\fw_2)$ for $\sl_3$.}
\label{fig:crystal_examples}
\end{figure}

\begin{ex}
Let $\lambda = (3, 2)$.
For any $n \geq 3$, we have
\[
\eschur^{32}(\xx_n|\aaa) = s_{32}(\xx_n) + (a_{-2} + a_{-1} + a_0 + a_1) s_{321}(\xx_n) + a_1 s_{33}(\xx_n) + \sum_{i > 1} a_i s_{42} + HOT.
\]
We have the crystals for the coefficients $a_{-1}$, $a_0$, and $a_1$ with $n = 3$ given by Figure~\ref{fig:crystal_examples}.
\end{ex}

\subsection{Uncrowding bijection}
\label{sec:uncrowding}

In this section, we construct an analog of the uncrowding bijection in analogy to~\cite{Buch02,HS20,LamPyl07}.
In this case, given our reading word, we will perform the uncrowding along diagonals, which requires a little more care.

\begin{dfn}[Uncrowding algorithm]
We proceed along diagonals starting from the lower-left box.
Start with $(P_0, Q_0) = (\emptyset, \emptyset)$.
For the $i$-th diagonal $D_i$, let $P_i = P_{i-1} \xleftarrow{RSK} \word(D_i)$ denote the RSK insertion.
We construct the recording tableau $Q_i$ as the skew shape $\mu^{(i)} / \nu^{(i)}$, where $\mu^{(i)}$ is the shape of $P_i$ and $\nu^{(i)}$ are the boxes of $\lambda$ up to the $i$-th diagonal (counted from the lower-left box) and slid up into a straight shape.
The entries of $Q_i$ are those of $Q_{i-1}$ shifted appropriately and then any remaining empty cells are filled with an $i$.
\end{dfn}

It would be good to describe this using a formulation analogous to the alternative descriptions for uncrowding given in~\cite{MPPS19,PPPS20}.

\begin{ex}
Consider the edge labeled tableau
\[
\ytableausetup{boxsize=1.5em}
T = \ytableaushort{11{\elb{2}{45}},226,3{\elb{3}{4}},{\elb{4}{5}}5}\,.
\]
Under the uncrowding algorithm, we have
\begin{alignat*}{3}
(P_0, Q_0) & = \emptyset, && \emptyset && \xleftarrow{RSK} 54
\\
(P_1, Q_1) & = \ytableaushort{4,5}\,, && \ytableaushort{{\cdot},1} && \xleftarrow{RSK} 53
\allowdisplaybreaks \\
(P_2, Q_2) & = \ytableaushort{35,4,5}\,, && \ytableaushort{{\cdot}{\cdot},{\cdot},1} && \xleftarrow{RSK} 432
\allowdisplaybreaks \\
(P_3, Q_3) & = \ytableaushort{23,34,45,5}\,, && \ytableaushort{{\cdot}{\cdot},{\cdot}{\cdot},{\cdot}3,1} && \xleftarrow{RSK} 21
\allowdisplaybreaks \\
(P_4, Q_4) & = \ytableaushort{12,23,34,45,5}\,, && \ytableaushort{{\cdot}{\cdot},{\cdot}{\cdot},{\cdot}{\cdot},{\cdot}3,1} && \xleftarrow{RSK} 61
\allowdisplaybreaks \\
(P_5, Q_5) & = \ytableaushort{116,22,33,44,55}\,, && \ytableaushort{{\cdot}{\cdot}{\cdot},{\cdot}{\cdot},{\cdot}{\cdot},{\cdot}{\cdot},13} \qquad && \xleftarrow{RSK} 542
\allowdisplaybreaks \\
(P_6, Q_6) & = \ytableaushort{1124,225,336,44,55}\,, \quad && \ytableaushort{{\cdot}{\cdot}{\cdot}{6},{\cdot}{\cdot}{\cdot},{\cdot}{\cdot}{6},{\cdot}{\cdot},13}
\end{alignat*}
\end{ex}

Now we need to describe the inverse algorithm; in particular, we need to describe which which cells to remove as we will use inverse RSK at each step.
We proceed by removing the diagonals in reverse order but starting with the cell at the bottom of the corresponding column.
We also remove any cell labeled by $i$ if we are at the $i$-th diagonal from the bottom, the result of which becomes an edge label and can be placed in a unique way such that the result is an edge labeled tableau.

\begin{thm}
\label{thm:uncrowdingmap}
The uncrowding map
\[
\Upsilon \colon \elt_{\lambda} \to \bigsqcup_{\mu \subseteq \lambda} \ssyt_{\mu} \times \mathcal{H}_{\lambda/\mu}
\]
is a crystal isomorphism for some set of tableau $\mathcal{H}_{\lambda/\mu}$.
\end{thm}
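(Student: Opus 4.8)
The plan is to factor $\Upsilon$ through the reading word and ordinary $\RSK$, thereby reducing the statement to two ingredients: (a) that $\Upsilon$ is a bijection, and (b) the classical fact that $\RSK$ is compatible with crystal operators. First I would note that, by the conventions of Section~\ref{sec:crystal_structure}, the crystal on $\elt_\lambda$ is pulled back along $\word$, and that along any diagonal $D$ the box entries together with the edge labels are pairwise distinct and strictly decrease when read from bottom to top; hence $\word(D)$ is a strictly decreasing word, which I will view as an element of $B(\fw_k)$, where $k$ is the number of letters read on $D$. By Proposition~\ref{prop:reading_word_commutes} and Theorem~\ref{thm:edge_labeled_crystal}, $\word$ is then a crystal embedding $\elt_\lambda \hookrightarrow \bigotimes_i B(\fw_{k_i})$ whose image is a union of connected components; here all but finitely many tensor factors are trivial, coming from empty diagonals (including those below the zeroth row). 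Next, observe that running the uncrowding algorithm is exactly $\RSK$-insertion of the letters of $\word(D_1), \word(D_2), \dotsc$ in order, so the insertion tableau is $P_m = P\bigl(\word(T)\bigr)$, the ordinary insertion tableau of the whole reading word, while $Q_m$ is a reindexing of the usual recording tableau. Since $w \mapsto P(w)$ is a crystal morphism $B(\fw_1)^{\otimes N} \to \bigsqcup_\mu \ssyt_\mu$ restricting to an isomorphism on each connected component, and the recording tableau is constant on connected components (see, \textit{e.g.}, \cite{BS17}), it follows that $\Upsilon_P \colon T \mapsto P_m$ is a crystal morphism and $\Upsilon_Q \colon T \mapsto Q_m$ is a crystal invariant.

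Granting that $\Upsilon$ is a bijection, I would then assemble the theorem as follows. Define $\mathcal{H}_{\lambda/\mu}$ to be the set of recording tableaux $Q_m(T)$ for $T \in \elt_\lambda$ with $\operatorname{sh}\bigl(P_m(T)\bigr) = \mu$; this set depends only on $\lambda$ and $\mu$. For a connected component $C \subseteq \elt_\lambda$, the image $\word(C)$ is a single connected component of $\bigotimes_i B(\fw_{k_i})$, isomorphic to some $B(\mu)$, which $P(\cdot)$ carries isomorphically onto $\ssyt_\mu$ with its standard crystal structure, and on which $Q_m$ equals a fixed tableau $Q_C \in \mathcal{H}_{\lambda/\mu}$. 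Hence $\Upsilon(C) = \ssyt_\mu \times \{Q_C\}$, with $\Upsilon|_C$ a crystal isomorphism onto $\ssyt_\mu$; injectivity of $\Upsilon$ forces distinct components sharing the same $\mu$ to have distinct $Q_C$. Endowing $\bigsqcup_\mu \ssyt_\mu \times \mathcal{H}_{\lambda/\mu}$ with the crystal structure $f_i(P,Q) = (f_i P, Q)$ and $e_i(P,Q) = (e_i P, Q)$ then makes $\Upsilon$ a crystal isomorphism.

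The hard part will be showing that $\Upsilon$ is a bijection; this is purely combinatorial, and I would model it on the uncrowding bijections of~\cite{Buch02,LamPyl07,HS20}. In the forward direction, since each $\word(D_i)$ is strictly decreasing, $\RSK$-inserting it adds a vertical strip to the current shape, so the cells of $Q_i$ not already occupied by $Q_{i-1}$ or covered by $\nu^{(i)}$ form a vertical strip that can consistently be labeled $i$; one also checks $\nu^{(i)} \subseteq \operatorname{sh}(P_i)$ at each step, so that $(P_i, Q_i)$ is always a legitimate pair. For the inverse, one processes diagonals in decreasing order: at stage $i$, the cells of $Q$ carrying the label $i$ together with the bottom cell of the relevant column are precisely the cells to be removed by reverse $\RSK$, and the extracted letters reassemble uniquely into the entry and edge labels on $D_i$, the edge positions being forced by the edge-labeled-tableau inequalities (including the ``correction'' configuration isolated just before Proposition~\ref{prop:reading_word_commutes}). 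Verifying that the two procedures are mutually inverse---and that the bookkeeping for the infinitely many edges below the zeroth row causes no difficulty---completes the proof. I expect this combinatorial verification, rather than the crystal-theoretic part, to be the principal technical burden.
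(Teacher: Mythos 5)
Your proposal is correct and follows essentially the same route as the paper: the paper also identifies the uncrowding of a diagonal with $\RSK$-insertion of its (strictly decreasing) reading word, obtains bijectivity from the explicit inverse procedure on diagonals, and deduces crystal compatibility from the fact that $\RSK$ is a crystal isomorphism (citing~\cite[Thm.~8.7]{BS17}), with $\mathcal{H}_{\lambda/\mu}$ simply being the resulting set of recording tableaux. Your write-up is somewhat more explicit about the component-by-component bookkeeping (recording tableaux being constant on connected components and the product crystal structure on $\ssyt_{\mu} \times \mathcal{H}_{\lambda/\mu}$), but this is the same argument.
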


\begin{proof}
From the above, we have that the uncrowding operation is a bijection.
It commutes with the crystal operators since RSK is a crystal isomorphism (see, \textit{e.g.},~\cite[Thm.~8.7]{BS17}).
\end{proof}

This gives an alternative proof of Theorem~\ref{thm:edge_labeled_crystal}.
While we do not characterize the recording tableaux, we conjecture $\mathcal{H}_{\lambda/\mu}$ is essentially the set of hook $\lambda/\mu$-tableaux from~\cite[Thm.~3.17]{Molev-dualschur}.

\begin{conj}
\label{conj:recording_tableaux}
When restricted to the tableau for $\widehat{s}_{\lambda}(\xx|\aaa)$, the set $\mathcal{H}_{\lambda/\mu}$ are the hook $\lambda/\mu$-tableau (with an appropriate change of labels).
\end{conj}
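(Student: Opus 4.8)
The plan is to deduce the conjecture from the uncrowding bijection of Theorem~\ref{thm:uncrowdingmap}, the identification of $\widehat{s}_\lambda$ with a specialization of $\eschur^\lambda$ furnished by Corollary~\ref{cor:dual_schur_comparison}, and the diagonal/hook dictionary underlying Section~\ref{sec:crystal_structure}. First I would isolate the subfamily $\elt_\lambda^{\widehat{s}} \subseteq \elt_\lambda$ whose generating function is $\widehat{s}_\lambda(\xx|{-\aaa})$. By Corollary~\ref{cor:dual_schur_comparison} and the observation following it, passing from $\eschur^\lambda$ to $\widehat{s}_\lambda$ amounts to setting $a_i = 0$ for all $i > \lambda_1$, so $\elt_\lambda^{\widehat{s}}$ is the set of shape-$\lambda$ edge labeled tableaux carrying no edge label on a diagonal of content exceeding $\lambda_1$. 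This condition is stable under the crystal operators: the only move that transports an edge label is the ``correction'' of Section~\ref{sec:crystal_structure}, which sends a label from the lower edge of a box to the upper edge of the box immediately to its left, and these two edges have the same content. Hence $\elt_\lambda^{\widehat{s}}$ is a union of crystal components, the defining condition is a condition on the recorded edge-label data alone, and $\Upsilon$ restricts to a crystal isomorphism $\elt_\lambda^{\widehat{s}} \to \bigsqcup_{\mu \subseteq \lambda} \ssyt_\mu \times \mathcal{H}^{\widehat{s}}_{\lambda/\mu}$. Taking the Schur expansion from the corollary following Theorem~\ref{thm:edge_labeled_crystal} along this restriction gives
\[
\widehat{s}_\lambda(\xx|{-\aaa}) = \sum_{\mu \subseteq \lambda} \Bigl( \sum_{Q \in \mathcal{H}^{\widehat{s}}_{\lambda/\mu}} \aaa^{\wt(Q)} \Bigr) s_\mu(\xx),
\]
where the cell of $Q$ labeled $i$ contributes $a_{c_i}$ for $c_i$ the content recorded by the $i$-th diagonal.

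Next I would describe $\mathcal{H}^{\widehat{s}}_{\lambda/\mu}$ explicitly via the uncrowding algorithm. Read bottom-to-top with its edge labels attached, a single diagonal $D_i$ gives a word that is strictly decreasing within each box block and jumps up between consecutive blocks, so the step $P_i = P_{i-1} \xleftarrow{RSK} \word(D_i)$ enlarges the recording shape by a single (possibly broken) hook whose cells are all labeled $i$: the box entries of $D_i$ occupy the positions dictated by $\lambda$, and each edge label on $D_i$ contributes exactly one further cell to this hook. Iterating over $i$, the recording tableau $Q$ is a filling of $\lambda/\mu$ in which, for every $i$, the label-$i$ cells form a hook, the labels being weakly increasing along rows and strictly increasing down columns in the pattern forced by the column-strictness of RSK and by the shape of $\lambda$. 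Under the change of labels $i \mapsto c_i$ --- i.e.\ Molev's substitution $a_i \leftrightarrow a_{-i}$ together with the content/diagonal dictionary and the sign from Corollary~\ref{cor:dual_schur_comparison} --- this is exactly a hook $\lambda/\mu$-tableau in the sense of~\cite[Thm.~3.17]{Molev-dualschur}, and $\aaa^{\wt(Q)}$ is its Molev weight. Conversely, because $\Upsilon$ is invertible, inverse uncrowding applied to the highest weight element $u_\mu$ of $\ssyt_\mu$ together with any hook $\lambda/\mu$-tableau pattern reconstructs an element of $\elt_\lambda^{\widehat{s}}$, so every hook $\lambda/\mu$-tableau arises.

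Combining the two directions gives a weight-preserving bijection between $\mathcal{H}^{\widehat{s}}_{\lambda/\mu}$ and the set of hook $\lambda/\mu$-tableaux, which is the content of the conjecture; as a consistency check, both generating functions compute the coefficient of $s_\mu$ in $\widehat{s}_\lambda(\xx|{-\aaa})$, the latter by~\cite[Thm.~3.17]{Molev-dualschur}. The main obstacle is the middle step: making precise and proving the assertion that RSK-inserting one diagonal appends a single hook of new cells to the recording tableau --- most plausibly by induction on the number of diagonals, tracking the bumping paths of the edge labels and using that the atomic crowding operation is exactly the hook$\leftrightarrow$diagonal bijection of Section~\ref{sec:crystal_structure} --- and then matching the induced monotonicity of the diagonal labels on $Q$ with the combinatorial conditions defining Molev's hook tableaux under the relabeling. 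A secondary technical point is pinning down $\elt_\lambda^{\widehat{s}}$ and the exact content shift $i \mapsto c_i$ imposed by the reading-word convention, which should follow from Corollary~\ref{cor:dual_schur_comparison} and the grid-scaling remark preceding Proposition~\ref{prop:combinatorial_partition_function}.
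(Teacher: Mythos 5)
This statement is Conjecture~\ref{conj:recording_tableaux}: the paper does not prove it, and indeed remarks that settling it would give a new proof of \cite[Thm.~3.17]{Molev-dualschur}. So there is no proof in the paper to compare against, and your proposal has to stand on its own as new mathematics. As written it does not: it is a strategy outline whose central step is exactly the content of the conjecture and is left unproven. The assertion that inserting $\word(D_i)$ into $P_{i-1}$ enlarges the recording data by a single hook of cells all labeled $i$, and that the resulting labellings satisfy precisely the row/column/content conditions defining Molev's hook $\lambda/\mu$-tableaux after the relabeling, is stated but not argued --- you yourself flag it as ``the main obstacle.'' Note also that the paper's recording tableau $Q_i$ is not simply the set of new cells of the insertion: it is built by a sliding construction ($\mu^{(i)}/\nu^{(i)}$ slid to a straight shape, old entries shifted, empty cells filled with $i$), so even relating ``cells labeled $i$ in $Q$'' to the bumping of $D_i$ requires an argument you have not supplied. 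Your converse direction is circular: invertibility of $\Upsilon$ (Theorem~\ref{thm:uncrowdingmap}) only inverts $\Upsilon$ on its image; applying ``inverse uncrowding'' to an arbitrary pair consisting of $u_\mu$ and a hook tableau presupposes that every hook tableau lies in the image, which is the surjectivity you are trying to prove.

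Two secondary points. First, the identification of ``the tableaux for $\widehat{s}_{\lambda}(\xx|\aaa)$'' inside $\elt_n(\lambda)$ is more delicate than you indicate: Corollary~\ref{cor:dual_schur_comparison} identifies $\widehat{s}_{\lambda}$ with $\widehat{\mathscr{E}}^{\lambda}$, whose combinatorial model in Section~\ref{sec:edge_variations} uses multiset labels on \emph{vertical} edges of actual boxes, and the passage back to $\eschur^{\lambda}$ involves products of $(1+a_k y_j)^{\pm 1}$ together with the index shift $\aaa^n$ and a sign; pinning down the exact subfamily and the ``appropriate change of labels'' is part of what a proof must do, not a footnote. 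Second, your observation that the correction move transports an edge label between two edges of equal content is correct (the crystal operators preserve the $\aaa$-weight), so the closure of the relevant subfamily under the crystal operators is believable once that subfamily is correctly specified --- but this is the easy part. In short, the proposal is a sensible plan consistent with the paper's intent, but the conjecture remains unproven by it: the hook-shape lemma for the diagonal-by-diagonal RSK recording step and the bijection with Molev's hook tableaux (in both directions) are missing.
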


Note that answering Conjecture~\ref{conj:recording_tableaux} would provide a new proof of~\cite[Thm.~3.17]{Molev-dualschur}.
If it is false, then there should be an uncrowding algorithm where the hook $\lambda/\mu$-tableau are the set of recording tableaux.

\begin{problem}
Determine if there exist crystal structures for the variations of the edge Schur functions.
If so, determine the corresponding uncrowding algorithm describing the Schur expansion.
\end{problem}

\begin{remark}
We could have alternatively read each box by using the edge above the box (instead of below).
We have the exact same exception to using the usual crystal structure (although the failure for the direct rule is different) and an isomorphic crystal.
In fact, these construct the same crystal structure on edge labeled tableaux as the reading words differ by Knuth relations, which yields the same insertion tableau after uncrowding (although their recording tableau may differ).
\end{remark}

\bibliographystyle{alpha}
\bibliography{grothendiecks}{}
\end{document}